\DeclareMathOperator{\Ext}{Ext}
\newcommand{\FIR}{\mathbf{FIR}}
\newcommand{\FI}{\mathbf{FI}}
\newcommand{\fgen}{\mathrm{fg}}
\newcommand{\stacks}[1]{\cite[\href{http://stacks.math.columbia.edu/tag/#1}{Tag~#1}]{stacks}}
\title{The semi-linear representation theory of the infinite symmetric group}
\author{Rohit Nagpal}
\address{Department of Mathematics, University of Michigan, Ann Arbor, MI}
\email{\href{mailto:rohitna@umich.edu}{rohitna@umich.edu}}
\urladdr{\url{http://www-personal.umich.edu/~rohitna/}}
\author{Andrew Snowden}
\address{Department of Mathematics, University of Michigan, Ann Arbor, MI}
\email{\href{mailto:asnowden@umich.edu}{asnowden@umich.edu}}
\urladdr{\url{http://www-personal.umich.edu/~asnowden/}}
\thanks{AS was supported by NSF DMS-1453893.}
\date{\today}
\begin{document}

\begin{abstract}
We study the category $\cA$ of smooth semilinear representations of the infinite symmetric group over the field of rational functions in infinitely many variables. We establish a number of results about the structure of $\cA$, e.g., classification of injective objects, finiteness of injective dimension, computation of the Grothendieck group, and so on. We also prove that $\cA$ is (essentially) equivalent to a simpler linear algebraic category $\cB$, which makes many properties of $\cA$ transparent.
\end{abstract}

\maketitle

\tableofcontents

\section{Introduction}

Cohen \cite{cohen,cohen2} proved that ideals in the infinite variable polynomial ring $R=\bk[\xi_1,\xi_2,\ldots]$ that are stable under the infinite symmetric group $\fS$ satisfy the ascending chain condition---that is, $R$ is $\fS$-noetherian---and applied this theorem to establish a finiteness property of the variety of metabelian groups. Cohen's theorem has received intense interest in the last decade (see, e.g., \cite{AschenbrennerHillar, DraismaEggermont, DraismaKuttler, GunturkunNagel, HillarSullivant, KLS, LNNR, LNNR2, NagelRomer, NagelRomer2}) due to its applicability in a wide variety of subjects. See \cite{draisma-notes} for a good introduction.

Motivated by this interest in $\fS$-ideals of $R$, and, more generally, $\fS$-equivariant $R$-modules, we have conducted a detailed investigation of these objects. We will report our results in a series of papers. This paper, which is the first, examines the structure of smooth semilinear representations of $\fS$ over the fraction field of $R$. Some of our results had previously been obtained by Rovinsky; see \S \ref{ss:rovinsky} for details.

\subsection{The structure of semilinear representations} \label{ss:results}

Fix a field $\bk$ and let $\bK=\bk(\xi_1,\xi_2,\ldots)$ be the rational function field in infinitely many variables. The infinite symmetric group $\fS=\bigcup_{n \ge 1} \fS_n$ naturally acts on $\bK$. We let $\cA$ be the category of smooth semilinear representations of $\fS$ over $\bK$ (see \S \ref{s:basic} for the definitions). Our goal is to understand this category.

Let $\bV^n$ be the $\bk$-vector space with basis indexed by $n$-element subsets of $[\infty]=\{1,2,\ldots\}$. This is naturally a smooth $\bk$-linear representation of $\fS$. Let $\bI^n=\bK \otimes_{\bk} \bV^n$, which is an object of $\cA$. We call an object of $\cA$ {\bf standard} if it is isomorphic to a direct sum of $\bI^n$'s. The standard objects play a central role in our investigation.

The following summarizes our main results about the structure of $\cA$. In what follows, $M$ denotes an arbitrary finitely generated object of $\cA$.
\begin{enumerate}[(i)]
\item The category $\cA$ is locally noetherian (Proposition~\ref{prop:noeth}). This is really just a corollary of Cohen's aforementioned work.
\item A sufficiently high shift of $M$ is standard (Theorem~\ref{thm:shift} and Corollary~\ref{cor:sstd}). The shift functor is defined in \S \ref{s:shift}.
\item $M$ can be embedded into a finitely generated standard object (Theorem~\ref{thm:embed}).
\item The standard objects are precisely the injective objects of $\cA$, and the $\bI^n$'s are precisely the indecomposable injectives (Theorem~\ref{thm:standards-are-injective} and Corollary~\ref{cor:indecomposable}).
\item $M$ has finite injective dimension, and admits a finite length resolution by finitely generated standard objects (Proposition~\ref{prop:res2}).
\item The Grothendieck group of $\cA^{\rm fg}$ (the category of finitely generated objects) has for a basis the classes $[\bI^n]$ for $n \ge 0$ (Theorem~\ref{thm:groeth}).
\end{enumerate}

\subsection{The equivalence with $\FIR^{\op}$-modules}

The above results show that the standard objects exert a great deal of control over the category $\cA$. This suggests it might be possible to completely describe $\cA$ in terms of the combinatorics of standard objects. We show that this is indeed the case, as we now explain.

Let $\FIR$ be the following $\bk$-linear category. The objects are finite sets. The morphism space $\Hom_{\FIR}(S,T)$ consists of all formal sums $\sum_{\phi \colon S \to T} c(\phi) [\phi]$ where $\phi$ varies over injections, $[\phi]$ is a formal symbol, and $c(\phi)$ belongs to the rational function field $\bk(t_i)_{i \in T}$. Composition is defined in the obvious manner (see \S \ref{s:fir}). The notation $\FIR$ is intended to be thought of as $\FI+\bR$, where $\FI$ is the category of finite sets and injections, as appearing in the work of Church--Ellenberg--Farb \cite{fimodules}, and $\bR$ indicates the presence of rational function coefficients. An {\bf $\FIR^{\op}$-module} is a $\bk$-linear functor from $\FIR^{\op}$ to the category of $\bk$-vector spaces. We let $\cB$ be the category of $\FIR^{\op}$-modules. The following is our main result on the connection between $\cA$ and $\cB$ (see Theorem~\ref{thm:main}):

\begin{theorem} \label{thm:equiv}
The category $\cA^{\rm fg}$ of finitely generated objects in $\cA$ is contravariantly equivalent to the category $\cB^{\rm fp}$ of finitely presented objects in $\cB$.
\end{theorem}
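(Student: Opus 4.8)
The plan is to construct an explicit contravariant functor $\Phi \colon \cA^{\fgen} \to \cB^{\fp}$ and show it is an equivalence by exhibiting a quasi-inverse, with the standard objects serving as the bridge between the two sides. On objects, I would send a finitely generated $M \in \cA$ to the $\FIR^{\op}$-module $S \mapsto \Hom_{\cA}(M, \bI^S)$, where $\bI^S$ denotes the standard object associated to $S$-element subsets (so $\bI^S \cong \bI^{|S|}$ as an object, but the labeling by the finite set $S$ is what makes the functoriality in $\FIR$ natural). The point is that an injection $\phi \colon S \hookrightarrow T$ and a rational function in the $t_i$ for $i \in T$ should together induce a morphism $\bI^T \to \bI^S$ in $\cA$ (or rather a map that lands in the correct Hom-space after the semilinear twist), and one checks this assembles into a functor $\FIR^{\op} \to \cB$ compatible with composition. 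Since $\Hom_{\cA}(-, \bI^S)$ is exact on the relevant subcategory (by injectivity of standard objects, item (iv)), $\Phi$ is exact, hence $\Phi(M)$ is finitely presented once we know $M$ has a presentation by finitely generated standard objects, which follows from items (iii) and (v).

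The key structural inputs are exactly the results quoted in \S\ref{ss:results}. First, every finitely generated $M$ embeds into a finitely generated standard object (item (iii)), and in fact admits a finite-length resolution $0 \to M \to \bI^{\bullet}$ by finitely generated standard objects (item (v)); applying $\Phi$ turns this into a finite presentation of $\Phi(M)$, so $\Phi$ really lands in $\cB^{\fp}$. Second, I need to compute $\Phi$ on standard objects and verify that $S \mapsto \Hom_{\cA}(\bI^n, \bI^S)$ is the representable (projective) $\FIR^{\op}$-module $P_n$ generated at a set of size $n$; this is the heart of the matter and reduces to an explicit computation of $\Hom_{\cA}(\bI^n, \bI^m)$ as a $\bk(t_i)$-module with its $\FIR$-bimodule structure. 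Granting this, $\Phi$ sends the injective resolution of $M$ to a projective resolution (in fact a length-one presentation) of $\Phi(M)$, and since every finitely presented $\FIR^{\op}$-module arises as the cokernel of a map of representables, essential surjectivity of $\Phi$ follows from essential surjectivity onto presentations together with fullness/faithfulness on standard objects.

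To prove full faithfulness I would argue in two stages. On standard objects it is the Hom-computation above: $\Phi$ induces an isomorphism $\Hom_{\cA}(\bI^m, \bI^n) \xrightarrow{\sim} \Hom_{\cB}(P_n, P_m) = \Hom_{\FIR}(m,n)$ by Yoneda, which is essentially the definition of $\FIR$ — so the definition of $\FIR$ is reverse-engineered precisely to make this work. For general finitely generated $M, N$, choose finite standard presentations $\bI^{a_1} \to \bI^{a_0} \to M \to 0$ and likewise for $N$; since $\Hom_{\cA}(-, N)$ and $\Hom_{\cB}(\Phi(-), \Phi(N))$ are both left-exact and agree on standards, a diagram chase (the five lemma applied to the two exact sequences obtained by mapping the presentation of $M$ into $N$ resp.\ $\Phi(N)$) gives that $\Phi$ is fully faithful on $\cA^{\fgen}$. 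The quasi-inverse $\Psi \colon \cB^{\fp} \to \cA^{\fgen}$ is then forced: send $P_n \mapsto \bI^n$, extend to presentations by right-exactness, and check well-definedness using fullness of $\Phi$ on standards.

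The main obstacle I anticipate is the explicit computation $\Hom_{\cA}(\bI^m, \bI^n) \cong \Hom_{\FIR}(m, n)$ together with verifying that all the structure (the $\bk(t_i)$-coefficients, and functoriality of the bimodule structure through $\FIR$) matches up correctly — this is where the smoothness and semilinearity hypotheses, and the precise behavior of $\bK = \bk(\xi_i)$ under $\fS$, enter in an essential and slightly delicate way. A secondary subtlety is bookkeeping the variance: $\cA \to \cB$ is contravariant, so representables in $\cB$ correspond to injectives (not projectives) in $\cA$, and one must be careful that ``finitely presented'' on the $\cB$ side corresponds to ``finitely generated'' on the $\cA$ side precisely because of the finite-length standard resolutions from item (v). Everything else — exactness of $\Phi$, assembling the functor, the five-lemma argument — should be routine once the Hom-computation is in hand.
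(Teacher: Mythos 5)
Your high-level strategy matches the paper's: define a contravariant functor by $\operatorname{Hom}$ into standard objects, use injectivity of standards for exactness, check $\Phi$ is an equivalence between projective subcategories by a direct Hom computation, and bootstrap to all of $\cA^{\fgen}$ via finite standard resolutions. However, the ``heart of the matter'' Hom computation you grant yourself is false as stated, and this is a genuine gap rather than a detail to be filled in.

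You propose $\Phi(M)(S) = \Hom_{\cA}(M, \bI^S)$ with $\bI^S \cong \bI^{|S|}$. But $\Hom_{\cA}(\bI^n, \bI^m)$ is a vector space over $\bF_n = \bk(\xi_1,\ldots,\xi_n)^{\fS_n}$ of dimension $\binom{n}{m}$ (this is Lemma~\ref{lem:stdhom}). On the other side, $\Hom_{\FIR}([m],[n])$ is a free module over $\bk(t_1,\ldots,t_n)$ with a basis indexed by injections $[m] \hookrightarrow [n]$, of which there are $n!/(n-m)!$. These do not agree: the coefficient ring is the full field $\bK_n \cong \bk(t_1,\ldots,t_n)$, not its $\fS_n$-invariants $\bF_n$, and the ranks differ by the factor $m!$ that relates ordered tuples to subsets. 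There is in fact no natural ring map $\bk(t_i)_{i \in T} \to \bF_{|T|}$ to make your proposed $\FIR^{\op}$-module structure even typecheck. The object you actually need is $\bJ^n = \bK \otimes_{\bk} \bW^n$, whose underlying $\bk$-module has a basis $e_{i_1,\ldots,i_n}$ indexed by \emph{ordered} tuples of distinct indices; then $\Hom_{\cA}(\bJ^n, M) = M^{\fS^n}$, and one computes $\Hom_{\cA}(\bJ^n, \bJ^m) \cong \bk(\xi_1,\ldots,\xi_n)^{\oplus n!/(n-m)!} = \Hom_{\FIR}([m],[n])$, which is exactly what reverse-engineers the definition of $\FIR$. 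Once you have $\bJ^n$, a short Galois-descent argument shows $\bJ^n \cong (\bI^n)^{\oplus n!}$ (Lemma~\ref{lem:jstruc}), so $\bJ^n$ is injective and all the structural results about $\bI^n$ transfer; this is what makes $\Phi = \Hom_{\cA}(-, \cJ)$ exact. Correspondingly, your intended quasi-inverse $\Psi$ should send $\bP^n$ to $\bJ^n$, not to $\bI^n$.

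Two smaller points. First, before one can even speak of the abelian category $\cB^{\rm fp}$ one must show $\cB$ is coherent, and the paper does this using the embedding theorem; you should make that step explicit rather than taking ``finitely presented'' for granted. Second, your five-lemma argument for full faithfulness in general is fine and is essentially equivalent to the paper's cleaner formulation (exact functor inducing an equivalence on projectives between two categories with enough projectives is an equivalence), so no issue there — the problem is localized entirely in the choice of $\bI^S$ versus $\bJ^n$.
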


This is a useful description of semilinear representations since $\FIR^{\op}$-modules tend to be easy to understand. For example, using it one can immediately write down the entire lattice of subobjects of $\bI^1$ (Example~\ref{ex:I1}). One can also easily recover many of the structural results about $\cA$ through this equivalence (see \S \ref{ss:app} for an example), though many of those results are needed to establish the equivalence in the first place.

Theorem~\ref{thm:equiv} implies that finitely presented objects of $\cB$ are coherent and artinian, and we establish these properties before proving the equivalence. We note that $\cB$ is \emph{not} locally noetherian. We believe that $\FIR^{\op}$ is the first natural category appearing in the context of representation stability where the module category is locally coherent but not locally noetherian.

\subsection{Relation to the work of Rovinsky} \label{ss:rovinsky}

After completing this work, we learned of the recent work of Rovinsky \cite{rovinsky,rovinsky2,rovinsky3} on the semilinear representation theory of $\fS$. Some of our results appear in Rovinsky's work: in particular, he proves (i), (ii), (iii) and (iv) from \S \ref{ss:results}. Results (v) and (vi), as well as the connection to $\FIR^{\op}$-modules, appear to be new. Our proofs of (i)--(iv) are quite different from Rovinsky's. For example, we prove vanishing of $\Ext^1$ between standard objects through a cohomology calculation (see \S \ref{ss:coh}), while Rovinsky's argument relies on growth estimates of certain dimensions \cite[\S 3.1]{rovinsky2}.

One further difference between Rovinsky's work and ours is that he employs a more general setup. Let $F/\bk$ be an extension in which $\bk$ is algebraically closed, and let $F_{\infty}$ be the fraction field of the infinite tensor power of $F$ over $\bk$. For example, if $F=\bk(\xi)$ then $F_{\infty}=\bk(\xi_1,\xi_2,\ldots)$. Rovinsky studies semilinear representations of $\fS$ over $F_{\infty}$, for quite general $F$. This extra level of generality does not appear to affect the proofs or results greatly, however, and it seems that our arguments would apply in that setting as well. We have not attempted to carry this out though.

\subsection{Notation}
\label{ss:note}

We list some of the important notation here:
\begin{description}[align=right,labelwidth=2cm,leftmargin=!]
\item [$\fS_n$] the symmetric group on $n$ letters
\item [$\fS$] the infinite symmetric group $\bigcup_{n \ge 1} \fS_n$
\item [$\fS^n$] the subgroup of $\fS$ fixing each of $1, \ldots, n$
\item [$\bk$] the base field
\item [$\bK$] the field $\bk(\xi_1,\xi_2,\ldots)$ of rational functions
\item [$\cA$] the category of smooth semilinear representations of $\fS$ over $\bK$
\item [$\bV^r$] the $\bk$-vector space with basis indexed by $r$-element subsets of $[\infty]$
\item [$\bW^r$] the $\bk$-vector space with basis $e_{i_1,\ldots,i_r}$ where $i_1,\ldots,i_r$ are distinct elements of $[\infty]$
\item [$\bI^r$] the $\bK$-vector space $\bK \otimes_{\bk} \bV^r$
\item [$\bJ^r$] the $\bK$-vector space $\bK \otimes_{\bk} \bW^r$
\item [{$[n]$}] the set $\{1,\ldots,n\}$, also used for $n=\infty$
\item [$\cB$] the category of $\FIR^{\op}$-modules
\end{description}

\subsection{Outline}

In \S \ref{s:basic} we give the basic definitions and prove some fundamental results. In \S \ref{s:shift} we introduce the shift operator and prove the important shift theorem. In \S \ref{s:std} we prove our main structural results on semilinear representations. Finally, in \S \ref{s:fir} we establish the connection to $\FIR^{\op}$-modules.

\section{Basic definitions and results} \label{s:basic}

\subsection{Semilinear representations}

Let $K$ be a field on which a group $G$ acts by field homomorphisms. A {\bf $K$-semilinear representation} of $G$ is a $K$-vector space $V$ equipped with an additive action of $G$ such that $g(av)=(ga)(gv)$ holds for all $g \in G$, $a \in K$, and $v \in V$. Galois descent can be used to describe semilinear representations in some cases:

\begin{proposition} \label{prop:galois}
Suppose $G$ is finite and acts faithfully on $K$, and let $F=K^G$ be the fixed field. Then we have mutually quasi-inverse equivalences of categories
\begin{align*}
\{ \text{$K$-semilinear representations of $G$} \} &\cong \{ \text{$F$-vector spaces} \} \\
M &\mapsto M^G \\
K \otimes_F V &\mapsfrom V
\end{align*}
In particular, every $K$-semilinear representation of $G$ is isomorphic to a direct sum of copies of the trivial semilinear representation $K$.
\end{proposition}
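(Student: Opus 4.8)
The plan is to verify that the two functors $M \mapsto M^G$ and $V \mapsto K \otimes_F V$ are mutually quasi-inverse. The key input is the classical normal basis theorem: since $G$ is finite and acts faithfully on $K$, the extension $K/F$ is finite Galois with group $G$, so $K$ is free of rank one as a module over the group algebra $F[G]$; equivalently $K \cong F[G]$ as $F[G]$-modules. First I would check that both functors are well-defined: $M^G$ is manifestly an $F$-vector space, and $K \otimes_F V$ carries the semilinear $G$-action $g(a \otimes v) = (ga) \otimes v$, which one checks satisfies $g(bw) = (gb)(gw)$ for $b \in K$, $w \in K \otimes_F V$.

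Next I would construct the natural transformations witnessing the equivalence. In one direction, there is an obvious natural map $\eta_V \colon V \to (K \otimes_F V)^G$, $v \mapsto 1 \otimes v$, and in the other there is the multiplication map $\epsilon_M \colon K \otimes_F M^G \to M$, $a \otimes m \mapsto am$, which is $K$-linear and $G$-equivariant. The heart of the proof is showing both are isomorphisms. For $\eta_V$: choosing an $F$-basis of $V$ reduces this to the statement that $K^G = F$, which is the definition of $F$. For $\epsilon_M$: this is the genuine content. I would first treat the case $M = K$ itself (the trivial semilinear representation), where $M^G = K^G = F$ and $\epsilon$ becomes the isomorphism $K \otimes_F F \cong K$. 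By additivity, $\epsilon$ is then an isomorphism for any direct sum of copies of $K$. To finish, I need that \emph{every} semilinear representation is such a direct sum --- so the last sentence of the proposition is not a mere corollary but is used in the proof of the equivalence.

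To prove that an arbitrary $K$-semilinear representation $M$ is a direct sum of copies of $K$: the $G$-action together with the $K$-action makes $M$ a module over the twisted group ring $K \rtimes G$ (the skew group algebra, with multiplication $(a \cdot g)(b \cdot h) = a(gb) \cdot gh$). Galois descent in its ring-theoretic form asserts that $K \rtimes G \cong \End_F(K) \cong M_{|G|}(F)$, a matrix ring over $F$; this is precisely a restatement of the normal basis theorem. Since $M_{|G|}(F)$ is a simple Artinian ring, every module over it is a direct sum of copies of the unique simple module, which corresponds to $K$. This gives the structural statement, and then $\epsilon_M$ is an isomorphism for all $M$, completing the equivalence.

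The main obstacle is the ring-theoretic descent isomorphism $K \rtimes G \cong \End_F(K)$ --- i.e., identifying $K \rtimes G$ with a matrix algebra over $F$. This is where faithfulness of the action and finiteness of $G$ are essential (faithfulness makes the natural map $K \rtimes G \to \End_F(K)$ injective, dimension count over $F$ gives $|G|^2 = [K:F]^2$ on both sides forcing surjectivity), and it is the step that encodes the normal basis theorem. Everything else --- well-definedness of the functors, the formulas for $\eta$ and $\epsilon$, checking they are mutually inverse once the structure theorem is known --- is routine.
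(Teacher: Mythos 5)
The paper's ``proof'' of this proposition is a one-line citation to the Stacks Project (Tag 0CDQ), which establishes Galois descent by means of the faithfully flat descent formalism and the isomorphism $K \otimes_F K \cong \prod_{g \in G} K$. Your proof is correct but takes a genuinely different and self-contained route: you package the $K$-linear structure together with the semilinear $G$-action into a module over the skew group ring $K \rtimes G$, prove that this ring is isomorphic to $\End_F(K) \cong M_{|G|}(F)$, and invoke the fact that modules over a matrix algebra are semisimple with a unique simple module. This makes the structural claim (every semilinear representation is a direct sum of copies of $K$) the engine of the proof rather than a corollary, and you correctly flag that logical dependence, which is a nice observation. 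The skew-group-ring argument is arguably more elementary and more explicit than descent-theoretic treatments, and it makes visible exactly where finiteness and faithfulness of the action enter.

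One small imprecision worth correcting: the isomorphism $K \rtimes G \cong \End_F(K)$ is not ``precisely a restatement of the normal basis theorem.'' What you actually use, and correctly sketch, is Dedekind's lemma on linear independence of field automorphisms (which gives injectivity of the natural map $K \rtimes G \to \End_F(K)$), followed by a dimension count over $F$ for surjectivity. That argument is independent of the normal basis theorem. The normal basis theorem (that $K$ is a free $F[G]$-module of rank one) can be \emph{deduced} from the skew group ring isomorphism, but the two statements are not the same, and your proof does not need the normal basis theorem as an input. So you could delete the opening appeal to the normal basis theorem and the parenthetical ``this is precisely a restatement of the normal basis theorem'' without loss; the Dedekind-plus-dimension-count argument you give at the end is the actual content and is complete.
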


\begin{proof}
See \stacks{0CDQ}.
\end{proof}

\subsection{The category $\cA$}

We let $\fS_n$ be the symmetric group on $n$ letters, and let $\fS=\bigcup_{n \ge 1} \fS_n$ be the infinite symmetric group. We let $\fS^n$ be the subgroup of $\fS$ stabilizing each of the numbers $1, 2, \ldots, n$. We say that an action of $\fS$ on a set $X$ is {\bf smooth} if every element $x \in X$ is stabilized by $\fS^n$ for some $n$ (depending on $x$).

Let $\bk$ be a field. We let $\bK=\bK(\bk)$ be the rational function field $\bk(\xi_1, \xi_2, \ldots)$, on which the group $\fS$ naturally acts smoothly. Let $\cA=\cA_{\bk}$ denote the category of smooth semilinear representations of $\fS$ over $\bK$. This is the main object of study of this paper. Note that there is nothing like Proposition~\ref{prop:galois} that applies in this case, since the group is infinite.

\subsection{Standard objects}

Let $\bV^r=\bV^r_{\bk}$ be the $\bk$-vector space with basis $\{e_S\}$, where $S$ varies over $r$-element subsets of $[\infty]=\{1,2,\ldots\}$. This is naturally a smooth $\bk$-linear representation of $\fS$. We let $\bI^r=\bK \otimes_{\bk} \bV^r$, which is naturally a smooth $\bK$-semilinear representation of $\fS$, and thus an object of $\cA$. We let $\epsilon_r$ be the basis vector $e_{\{1,\ldots,r\}}$, which clearly generates both $\bV^r$ and $\bI^r$. We say that an object of $\cA$ is {\bf standard} if it is a direct sum of objects of the form $\bI^r$, and {\bf semi-standard} if it admits a finite filtration whose graded pieces are standard.

\begin{proposition} 
\label{prop:adjunction}
Let $V$ be a smooth $\bk$-linear  $\fS$-representation, and let $M$ be an object in $\cA$. Then we have a natural isomorphism
\begin{displaymath}
\phi \colon \Hom_{\fS}(V, M) \to \Hom_{\cA}(\bK \otimes_{\bk} V, M)
\end{displaymath}
given by $\phi(g)(x \otimes v) = xg(v)$. In particular, an injective object in $\cA$ is injective as a smooth $\fS$-representation.
\end{proposition}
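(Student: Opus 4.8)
The plan is to verify the claimed map $\phi$ is a well-defined natural isomorphism, and then deduce the injectivity statement as a formal consequence. First I would check that $\phi(g)$ is actually a morphism in $\cA$: given $\fS$-equivariant $\bk$-linear $g \colon V \to M$, the formula $\phi(g)(x \otimes v) = x g(v)$ clearly defines a $\bK$-linear map $\bK \otimes_{\bk} V \to M$ (it is $\bK$-linear in $x$ by construction, and is well-defined since $g$ is $\bk$-linear), and I must check $\fS$-equivariance: for $h \in \fS$ one has $h \cdot \phi(g)(x \otimes v) = h(x g(v)) = (hx)(h g(v)) = (hx) g(hv) = \phi(g)(hx \otimes hv) = \phi(g)(h(x \otimes v))$, using semilinearity of the action on $M$ and $\fS$-equivariance of $g$. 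So $\phi$ lands in $\Hom_{\cA}$, and it is evidently $\bk$-linear and natural in both $V$ and $M$.

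Next I would construct the inverse. Given a morphism $f \colon \bK \otimes_{\bk} V \to M$ in $\cA$, set $\psi(f) = f|_{1 \otimes V} \colon V \to M$, i.e. $\psi(f)(v) = f(1 \otimes v)$. This is $\bk$-linear and $\fS$-equivariant: $\psi(f)(hv) = f(1 \otimes hv) = f(h(1 \otimes v)) = h f(1 \otimes v) = h \psi(f)(v)$, since $\fS$ fixes $1 \in \bK$. That $\psi \circ \phi = \id$ is immediate from the formulas. For $\phi \circ \psi = \id$, given $f$ one computes $\phi(\psi(f))(x \otimes v) = x \cdot \psi(f)(v) = x f(1 \otimes v) = f(x \otimes v)$ by $\bK$-linearity of $f$; since elements $x \otimes v$ span $\bK \otimes_{\bk} V$ over $\bk$ this gives $\phi(\psi(f)) = f$. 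Hence $\phi$ is an isomorphism, natural in $M$ (and in $V$).

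Finally, for the ``in particular'' clause: suppose $M$ is injective in $\cA$. The functor $V \mapsto \bK \otimes_{\bk} V$ from smooth $\bk$-linear $\fS$-representations to $\cA$ is exact, since $\bK$ is flat (indeed free) over $\bk$ and smoothness is preserved under this base change. Therefore $\Hom_{\fS}(-, M) \cong \Hom_{\cA}(\bK \otimes_{\bk} -, M)$ is a composite of an exact functor with the exact functor $\Hom_{\cA}(-, M)$, hence exact; equivalently, $M$ is an injective object in the category of smooth $\bk$-linear $\fS$-representations. The only point needing a word of care is that this argument requires knowing that the category of smooth $\bk$-linear $\fS$-representations is abelian (so that exactness makes sense) and that $\bK \otimes_{\bk} -$ preserves smoothness and the relevant kernels/cokernels — all routine.

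I do not anticipate a serious obstacle here; the statement is essentially a base-change adjunction of the form ``extension of scalars is left adjoint to restriction,'' adapted to the semilinear setting, and the only subtlety is bookkeeping with the $\fS$-action (checking semilinearity interacts correctly with equivariance). The mildly delicate point, if any, is confirming that $\bK \otimes_{\bk} V$ is again \emph{smooth} as an $\fS$-representation, which follows because a vector $x \otimes v$ with $x \in \bk(\xi_1,\dots,\xi_n)$ and $v$ fixed by $\fS^m$ is fixed by $\fS^{\max(n,m)}$, extending linearly.
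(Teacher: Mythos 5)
Your proposal matches the paper's argument exactly: you define the inverse $\psi$ by $\psi(f)(v) = f(1 \otimes v)$ and verify it is a two-sided inverse to $\phi$, just as the paper does (the paper merely states this step tersely). Your additional unpacking of the ``in particular'' clause via exactness of $\bK \otimes_{\bk} -$ is the standard adjunction argument and is correct.
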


\begin{proof}
Define
\begin{displaymath}
\psi \colon \Hom_{\cA}(\bK \otimes_{\bk} V, M) \to \Hom_{\fS}(V, M)
\end{displaymath}
by $\psi(f)(v) = f(1 \otimes v)$. One easily sees that $\psi$ is the inverse of $\phi$.
\end{proof}

\begin{proposition} \label{prop:stdmap}
Let $M$ be an object of $\cA$. Then the map
\begin{displaymath}
\Hom_{\cA}(\bI^r, M) \to M^{\fS_r \times \fS^r}, \qquad
f \mapsto f(\epsilon_r)
\end{displaymath}
is an isomorphism.
\end{proposition}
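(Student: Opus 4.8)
The plan is to bootstrap off Proposition~\ref{prop:adjunction} and then invoke the standard adjunction for permutation representations. First I would apply Proposition~\ref{prop:adjunction} with $V=\bV^r$, which identifies $\Hom_{\cA}(\bI^r,M)=\Hom_{\cA}(\bK\otimes_{\bk}\bV^r,M)$ with $\Hom_{\fS}(\bV^r,M)$. Tracing through the formula $\phi(g)(x\otimes v)=xg(v)$ and using that $\epsilon_r\in\bI^r$ is the element $1\otimes e_{\{1,\ldots,r\}}$, one sees that evaluation at $\epsilon_r$ corresponds under this identification to $g\mapsto g(e_{\{1,\ldots,r\}})$. So it suffices to prove that $g\mapsto g(e_{\{1,\ldots,r\}})$ is an isomorphism $\Hom_{\fS}(\bV^r,M)\to M^{\fS_r\times\fS^r}$.

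Next I would recognize $\bV^r$ as the permutation representation of $\fS$ on the set of $r$-element subsets of $[\infty]$. Since $\fS$ acts transitively on this set and the stabilizer of $\{1,\ldots,r\}$ is exactly $\fS_r\times\fS^r$ (an element of $\fS$ preserving the subset $\{1,\ldots,r\}$ splits as a permutation of $\{1,\ldots,r\}$ together with a permutation of its complement), the module $\bV^r$ is the permutation representation $\bk[\fS/(\fS_r\times\fS^r)]$ with $e_{\{1,\ldots,r\}}$ corresponding to the trivial coset. In particular $\bV^r$ is a free $\bk$-module on the orbit, so no convergence or topology issues enter.

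Then the desired statement is the usual adjunction between the orbit representation and restriction. Given $g\in\Hom_{\fS}(\bV^r,M)$ and $\sigma\in\fS_r\times\fS^r$, we get $\sigma g(e_{\{1,\ldots,r\}})=g(\sigma e_{\{1,\ldots,r\}})=g(e_{\{1,\ldots,r\}})$, so the image of the map really does lie in $M^{\fS_r\times\fS^r}$. For the inverse, given $m\in M^{\fS_r\times\fS^r}$, I would set $g(e_S)=\sigma m$ where $\sigma\in\fS$ is any element with $\sigma\{1,\ldots,r\}=S$; this is well defined precisely because $m$ is $(\fS_r\times\fS^r)$-fixed, and one then extends $\bk$-linearly over the basis $\{e_S\}$. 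This is visibly a two-sided inverse, and additivity in $m$ yields $\bk$-linearity.

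The argument is essentially formal, so there is no genuine obstacle; the two points that deserve a line of care are the identification of the stabilizer of $e_{\{1,\ldots,r\}}$ in $\fS$ with $\fS_r\times\fS^r$, and --- if one prefers to construct the inverse of the original map directly rather than routing through Proposition~\ref{prop:adjunction} --- the verification that the $\bk$-linear map $\bV^r\to M$ just built extends to a $\bK$-linear, genuinely $\fS$-equivariant (hence semilinear) map $\bI^r\to M$, which is a one-line check using $g(av)=(ga)(gv)$.
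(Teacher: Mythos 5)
Your proposal is correct and follows essentially the same route as the paper: both apply Proposition~\ref{prop:adjunction} to reduce to $\Hom_{\fS}(\bV^r,M)$, identify $\bV^r$ as the permutation representation induced from the trivial representation of the stabilizer $\fS_r\times\fS^r$ of $\{1,\ldots,r\}$, and conclude by Frobenius reciprocity. You merely spell out the orbit--stabilizer/Frobenius-reciprocity step explicitly, where the paper cites it in one line.
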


\begin{proof}
By Proposition~\ref{prop:adjunction}, we have
\begin{displaymath}
\Hom_{\cA}(\bI^r, M) = \Hom_{\fS}(\bV^r, M).
\end{displaymath}
Now, note that $\bV^r$ is the induction of the trivial representation of $\fS_r \times \fS^r$ to $\fS$. The result thus follows from Frobenius reciprocity.
\end{proof}


\begin{corollary} \label{cor:stdquo}
Every object of $\cA$ is a quotient of a standard object.
\end{corollary}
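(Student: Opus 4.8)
The plan is to use Proposition~\ref{prop:stdmap} to produce enough maps from standard objects onto any given object of $\cA$. Let $M$ be an object of $\cA$. Since $M$ is smooth, every element $x \in M$ is fixed by $\fS^n$ for some $n$; after enlarging $n$ we may also assume $x$ is fixed by $\fS_r \times \fS^r$ for a suitable $r$ (for instance, if $x$ is fixed by $\fS^n$, then averaging or simply noting that $x$ may not be $\fS_n$-invariant — so instead I will not try to make $x$ itself invariant, but rather work with a finite orbit). Let me restructure: given $x \in M$, smoothness gives $n$ with $\fS^n$ fixing $x$. Then the $\fS_n$-orbit of $x$ spans a finite-dimensional $\bk$-subspace, and in particular there is some $r$ and some element in the $\fS$-translates of $x$ — actually the cleanest route is this: the subrepresentation of $M$ generated by $x$ over $\fS$ is a smooth $\fS$-representation, and $x$ is fixed by $\fS^n$, so $x \in M^{\fS^n}$; decomposing the $\fS_n$-representation $M^{\fS^n}$ into isotypic pieces is messier than needed.

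A better approach: I claim that for each $x \in M^{\fS^r \times \fS_{[r+1,\infty)}}$-type invariant we get a map, but more simply, I would argue directly as follows. For each $x \in M$, choose $n$ so that $\fS^n$ fixes $x$. The key observation is that $\bW^r$-style objects, or rather a suitable induced object, surject onto the $\fS$-submodule generated by $x$. Concretely, consider the $\bk$-linear $\fS$-representation $\bK$-free on the orbit: let $U$ be the $\bk$-span of the $\fS$-orbit of $x$ inside $M$; this is a smooth $\bk$-linear $\fS$-representation, and it is a quotient of the induced representation $\mathrm{Ind}_{\fS^n}^{\fS} \bk$ (sending the generator to $x$), since $x$ is $\fS^n$-fixed. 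Now $\mathrm{Ind}_{\fS^n}^{\fS}\bk$ need not be $\bV^r$, but it is a smooth $\bk$-linear $\fS$-representation, and by Proposition~\ref{prop:adjunction} the object $\bK \otimes_{\bk} \mathrm{Ind}_{\fS^n}^{\fS}\bk$ maps to $M$ hitting $x$. However, this is not standard. So the real work is to show $\mathrm{Ind}_{\fS^n}^{\fS}\bk$, after tensoring up to $\bK$, is a quotient of a standard object — or better, to avoid this, use that $\mathrm{Ind}_{\fS^n}^\fS \bk$ is itself a quotient of a (possibly infinite) direct sum of $\bV^r$'s.

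So the cleanest line of attack is the following two-step argument. First, reduce to showing that every smooth $\bk$-linear $\fS$-representation $V$ is a quotient of a direct sum of copies of the $\bV^r$'s: granting this, tensoring with $\bK$ over $\bk$ is right exact, so $\bK \otimes_\bk V$ is a quotient of a standard object; and applying this to $V = M$ viewed as a smooth $\bk$-linear representation (forgetting the $\bK$-structure) together with the counit map $\bK \otimes_\bk M \to M$ (which is surjective, being $a \otimes m \mapsto am$ and hitting $m$ via $1 \otimes m$), we win. Second, prove that claim: given smooth $\bk$-linear $V$ and $x \in V$ fixed by $\fS^n$, the element $\epsilon_n \in \bV^n$ is fixed by $\fS_n \times \fS^n \supseteq \fS^n$ — wait, I need $x$ fixed by $\fS_n \times \fS^n$ to apply Proposition~\ref{prop:stdmap}, whereas I only have $\fS^n$. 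But this is fine: $\fS^n$ alone already works if I replace $\bV^n$ by the right induced object. Actually Proposition~\ref{prop:stdmap} is exactly the tool if $x$ happens to be $\fS_n$-invariant too; in general I should note that $\bigoplus_n \bV^n$ does \emph{not} obviously surject onto everything. Let me therefore instead invoke a general representation: for $x$ fixed by $\fS^n$, the $\fS$-map $\mathrm{Ind}_{\fS^n}^\fS \bk \to V$ sending the generator to $x$ has image the subrepresentation generated by $x$; as $x$ ranges over $V$ these cover $V$, so $V$ is a quotient of a direct sum of copies of $\mathrm{Ind}_{\fS^n}^\fS\bk$ for various $n$. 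Finally, $\mathrm{Ind}_{\fS^n}^\fS \bk$ decomposes (or at least surjects from) a direct sum of $\bV^r$'s: indeed $\bV^r = \mathrm{Ind}_{\fS_r \times \fS^r}^\fS \bk$, and since $\fS_r \times \fS^r \supseteq \fS^r \supseteq \fS^n$ for $r \le n$... hmm, the inclusion of subgroups goes the wrong way for a direct surjection.

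The main obstacle, then, is precisely this last bookkeeping step: showing that the induced module $\mathrm{Ind}_{\fS^n}^\fS \bk$ is a quotient (equivalently, here, a summand, by semisimplicity of $\bk[\fS_n]$ in characteristic $0$ — but we should not assume that) of a sum of the $\bV^r$'s. I expect the right fix is to bypass $\mathrm{Ind}_{\fS^n}^\fS\bk$ entirely and argue that $\bigoplus_{r \geq 0} (\bV^r)^{\oplus J_r}$ surjects onto $V$ by checking that the $\bk$-subspace of $V$ generated by \emph{all} images of maps $\bV^r \to V$ is all of $V$; equivalently, by Proposition~\ref{prop:stdmap} (in its $\bk$-linear incarnation via Frobenius reciprocity), that every $x \in V$ lies in the $\fS$-span of $V^{\fS_r \times \fS^r}$ for some $r$. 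This is the crux, and it holds because for $x$ fixed by $\fS^n$, the symmetrization $y = \sum_{\sigma \in \fS_n} \sigma x$ — no, that lands in $V^{\fS_n \times \fS^n}$ but may be zero in bad characteristic. In the end I believe the correct and characteristic-free statement to prove is that $\bV^n$ surjects onto $\mathrm{Ind}_{\fS^n}^\fS \bk$ is false, but that their $\bK$-tensor-ups have the property we need via a more hands-on surjection; pinning this down — reconciling "generated by $\epsilon_r$" (the $\fS_r \times \fS^r$-invariant) with "generated by an arbitrary $\fS^n$-invariant" — is the step I would budget the most care for, and it may require first establishing the shift theorem (ii) of \S\ref{ss:results}, which is presumably why the excerpt places results in the order it does.
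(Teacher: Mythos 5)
You correctly identify the crux: you need to pass from ``$x$ is $\fS^n$-invariant'' to ``$x$ lies in the $\bK$-span of elements of $M^{\fS_r \times \fS^r}$,'' and you correctly reject averaging over $\fS_n$ as failing in bad characteristic. But you then stop without resolving it, and you speculate that the shift theorem might be needed. It is not; the corollary is proved before and independently of the shift theorem. The ingredient you are missing is Proposition~\ref{prop:galois} (Galois descent), which is exactly the tool for making this step characteristic-free.

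Here is the point. Fix $x \in M$ fixed by $\fS^r$. The subspace $M^{\fS^r}$ is a vector space over $\bK_r = \bk(\xi_1,\ldots,\xi_r)$ (the fixed field of $\fS^r$ acting on $\bK$) and carries a $\bK_r$-semilinear action of the \emph{finite} group $\fS_r$, which acts faithfully on $\bK_r$. Proposition~\ref{prop:galois} therefore applies: the natural map $\bK_r \otimes_{\bF_r} (M^{\fS^r})^{\fS_r} \to M^{\fS^r}$ is an isomorphism, where $\bF_r = \bK_r^{\fS_r}$. In particular $M^{\fS^r}$ is spanned over $\bK_r$ by $M^{\fS_r \times \fS^r}$, so $x = \sum_i a_i y_i$ with $a_i \in \bK_r$ and $y_i \in M^{\fS_r \times \fS^r}$. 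Each $y_i$ gives, via Proposition~\ref{prop:stdmap}, a map $\bI^r \to M$, and $x$ lies in the sum of their images (the coefficients $a_i \in \bK_r \subset \bK$ are absorbed by $\bK$-linearity). Running over all $x$ shows that $\bigoplus_{r\ge 0} \Hom_{\cA}(\bI^r,M) \otimes \bI^r \to M$ is surjective, and the source is standard. Note this argument never leaves the $\bK$-semilinear world; your repeated attempts to work purely $\bk$-linearly with $\mathrm{Ind}_{\fS^n}^{\fS}\bk$ and then tensor up are precisely what makes the bookkeeping intractable, since there is no good characteristic-free replacement for ``$\bk[\fS_n]$ is semisimple'' at the $\bk$-linear level; semilinearity over $\bK_r$ buys you exactly that semisimplicity via Galois descent.
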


\begin{proof}
Let $M$ be an object of $\cA$ and let $x \in M$. Since $M$ is smooth, $x$ is fixed by $\fS^r$ for some $r$. The space $M^{\fS^r}$ is a semilinear representation of $\fS_r$ over the field $\bK_r=\bk(\xi_1, \ldots, \xi_r)$, and is therefore spanned as a $\bK_r$-vector space by its $\fS_r$-invariants (Proposition~\ref{prop:galois}). We can thus write $x=\sum_{i=1}^n a_i y_i$ where $a_i \in \bK_r$ and $y_i \in M^{\fS_r \times \fS^r}$. We thus see that if $f_i \colon \bI^r \to M$ is the map corresponding to $y_i$ then $x$ belongs to $\sum_{i=1}^n \im(f_i)$. Since $x$ was arbitrary, it follows that the natural map
\begin{displaymath}
\bigoplus_{r \ge 0} \Hom_{\cA}(\bI^r, M) \otimes \bI^r \to M
\end{displaymath}
is surjective. Since the domain is standard, the result follows.
\end{proof}

We define the {\bf generation degree} of $M$, denoted $g(M)$, to be the minimum $n$ such that $M$ is a quotient of a sum of $\bI_{\bk}^r$'s with $r \le n$, or $\infty$ if no such $n$ exists. We note that $M$ has generation degree $\le g$ if and only if $M$ is generated by $M^{\fS^g}$.

\subsection{The noetherian property}

\begin{lemma}
\label{lem:cohen}
Let $\bR=\bk[\xi_1,\xi_2,\ldots]$, and let $V$ be the $\bk$-linear representation of $\fS$ with basis $\{e_S\}$ where $S$ varies over $r$-element subsets of $[\infty]$. Then $\bR \otimes_{\bk} V$ is a noetherian object in the category of $\fS$-equivariant $\bR$-modules. Moreover, the field $\bk$ can be replaced by any noetherian ring.
\end{lemma}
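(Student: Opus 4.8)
The plan is to reduce this statement to Cohen's theorem on $\fS$-noetherianity of $\bR$ itself, using an induction on $r$ together with a localization/stratification trick. Write $V = V^r$ for the representation on $r$-element subsets. The key observation is that $\bR \otimes_\bk V^r$ has a natural exhaustive filtration by the ``number of variables used,'' but more usefully, there is an $\fS$-equivariant surjection $\bR \otimes_\bk W^r \twoheadrightarrow \bR \otimes_\bk V^r$ from the module on \emph{ordered} $r$-tuples of distinct indices, so it suffices to treat $\bR \otimes_\bk W^r$ (a quotient of a noetherian object is noetherian). For $W^r$ the point is that $\bR \otimes_\bk W^r$ is generated by the single element $e_{1,2,\ldots,r}$ as an $\fS$-equivariant $\bR$-module, so we are asking whether this cyclic equivariant module is noetherian.

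The main step is an induction on $r$. The base case $r=0$ is exactly Cohen's theorem: $\bR$ is $\fS$-noetherian (valid over any noetherian coefficient ring, as in the references cited). For the inductive step, I would stratify according to which variables among $\xi_1,\ldots,\xi_r$ ``interact'' with a given submodule. Concretely, let $N \subseteq \bR \otimes_\bk W^r$ be an $\fS$-equivariant submodule. Consider the subgroup $\fS^r = \fS_{\{r+1,r+2,\ldots\}}$; then $N^{\fS^r}$ is a module over $\bR_r \otimes_\bk \bR[\xi_{r+1},\ldots]$ in an appropriate sense — actually the cleaner move is to observe that $\bR \otimes_\bk W^r \cong \bigoplus (\text{shifted copies})$ after restricting the $\fS$-action, or to use that $W^r$ as an $\fS$-set is $\fS/(\fS_r \times \fS^r)$-ish (it is $\fS \cdot e_{1,\ldots,r}$ with that stabilizer after forgetting the $\fS_r$-twist). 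Then $\Hom_\fS$-adjunction (Proposition~\ref{prop:adjunction}-style, at the level of $\bR$-modules) identifies equivariant $\bR$-submodules of $\bR\otimes_\bk W^r$ with certain $\bR$-submodules stable under the ``residual'' action, reducing the ACC to noetherianity over $\bR$ of a module involving $r-1$ fewer ordered marked points plus bookkeeping, which the inductive hypothesis handles.

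An alternative and probably cleaner route, which I would actually pursue, is to invoke the general machinery that if $A$ is an $\fS$-noetherian ring and $P$ is an $\fS$-set such that the action of $\fS$ on $P$ is ``orbit-finite'' in each bounded range — more precisely, $P = W^r$ has the property that $\fS^n \backslash P$ is finite for each $n$ — then $A \otimes_\bk \bk[P]$ is a noetherian object in $A$-equivariant-modules. This is the standard ``Noetherianity up to symmetry'' packaging (see Draisma's notes, or the formulation via well-partial-orders à la Higman): one shows the monomials of $\bR \otimes_\bk W^r$ are well-partially-ordered under the divisibility-plus-$\fS$ order by combining Cohen/Higman for $\bR$ with the fact that $r$-tuples of distinct integers are w.p.o.\ under the relevant order, and Dickson-type arguments then give ACC on all (not merely monomial) equivariant submodules after a Gröbner/initial-submodule reduction. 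Replacing $\bk$ by a noetherian ring $R$ changes nothing in this argument since Higman's lemma and the Gröbner reduction are insensitive to the coefficient ring, and Cohen's theorem holds over any noetherian ring.

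The main obstacle is the passage from monomial (or leading-term) equivariant submodules to arbitrary ones: one needs a working theory of $\fS$-equivariant Gröbner bases / initial modules for $\bR \otimes_\bk W^r$, i.e.\ a monomial order on the $\bk$-basis that is compatible with both multiplication by $\bR$ and the $\fS$-action, together with the observation that $N \subseteq N'$ with $\operatorname{in}(N) = \operatorname{in}(N')$ forces $N = N'$ (which uses that the order is a well-order on each $\fS^n$-bounded piece). Setting up this order carefully — in particular handling the ordered marked points $i_1,\ldots,i_r$ and the infinitely many variables simultaneously — is the delicate part; everything else is a routine invocation of Higman's lemma and the inductive hypothesis.
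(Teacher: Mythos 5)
Your Gröbner/Higman route is a legitimate and standard way to attack this lemma (it is what the paper alludes to with ``the result for modules can be deduced by the same arguments''), and the preliminary reduction from $V^r$ to the ordered-tuple version $W^r$ is fine, since a quotient of a noetherian object is noetherian. But as written the proposal does not constitute a proof. The inductive argument in your first paragraph never closes: the claimed reduction to ``a module involving $r-1$ fewer ordered marked points plus bookkeeping, which the inductive hypothesis handles'' is an assertion, not an argument, and you do not even pin down what the inductive statement should be. The second route is a correct program, but you explicitly flag its two hard points --- constructing a term order on the monomial basis of $\bR \otimes_{\bk} W^r$ compatible with both $\fS$ and $\bR$-multiplication, and passing from ACC for initial submodules to ACC for all submodules --- and leave both unfilled. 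Those are exactly the technical crux of the equivariant Gr\"obner approach; without them you have a plan, not a proof.

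The paper avoids all of this with a short formal reduction of the module statement to the already-known ideal statement, and you should know the trick. Set $\bR' = \bR[\zeta_1,\zeta_2,\ldots]$ with $\fS$ permuting the $\xi$'s and the $\zeta$'s simultaneously, and let $I_k \subseteq \bR'$ be the $\fS$-stable ideal generated by the degree-$k$ monomials in the $\zeta$'s together with all $\zeta_i^2$. Since $\fS$-stable ideals of $\bR'$ satisfy ACC (Cohen, Aschenbrenner--Hillar, over any noetherian coefficient ring), $I_r/I_{r+1}$ is an equivariantly noetherian $\bR'$-module. All $\zeta_i$ act as zero on $I_r/I_{r+1}$, and its surviving monomials are $m\,\zeta_{i_1}\cdots\zeta_{i_r}$ with $m$ a $\xi$-monomial and $\{i_1,\ldots,i_r\}$ an $r$-element subset; so as an $\fS$-equivariant $\bR$-module it is exactly $\bR \otimes_{\bk} V$, and restriction along $\bR \hookrightarrow \bR'$ gives an order-preserving bijection of equivariant submodule lattices. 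ACC transfers at once. This eliminates any need for new well-partial-order or Gr\"obner machinery beyond what is already required for the ideal case, which is the part your sketch leaves open.
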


\begin{proof}
The noetherian property for ideals is well-known; see \cite{cohen2, AschenbrennerHillar}. The result for modules can be deduced by the same arguments. Alternatively, the module result can be deduced from the ideal result, as we now explain.

Let $\bR' = \bR[\zeta_1, \zeta_2, \ldots]$, on which $\fS$ acts by permuting both sets of variables. Let $I_k$ be the ideal of $\bR'$ generated by monomials in $\zeta_1, \zeta_2, \ldots$ of degree $k$ and the monomials of the form $\zeta_i^2$.  It is known that $\fS$-stable ideals of $\bR'$ satisfy ACC \cite{cohen2, AschenbrennerHillar}. In particular, the $\bR'$-module $I_r/I_{r+1}$ is equivariantly noetherian. Now we note that the map
\begin{displaymath}
\{ \text{subobjects of $I_r/I_{r+1}$} \} \to \{ \text{subobjects of $\bR \otimes_{\bk} V$} \},
\end{displaymath}
given by the pullback along the natural $\fS$-equivariant injection $i \colon \bR \to \bR'$, is an order preserving bijection. Since the source satisfies ACC, so does the target. As this argument and the cited work apply when $\bk$ is an arbitrary noetherian ring, the proof is complete.
\end{proof}

\begin{proposition} \label{prop:noeth}
The category $\cA$ is locally noetherian.
\end{proposition}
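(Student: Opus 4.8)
The plan is to exhibit a set of noetherian generators for $\cA$. By Corollary~\ref{cor:stdquo} every object of $\cA$ is a quotient of a standard object, i.e.\ of a direct sum of the objects $\bI^r$, so $\{\bI^r\}_{r \ge 0}$ is a set of generators for $\cA$. It is routine to verify that $\cA$ is a Grothendieck category: direct sums, kernels, cokernels, and filtered colimits are all computed on underlying $\bK$-vector spaces, smoothness is preserved by each of these operations, and exactness of filtered colimits is inherited from vector spaces. Since a Grothendieck category equipped with a set of noetherian generators is locally noetherian, it suffices to show that each $\bI^r$ is a noetherian object of $\cA$, i.e.\ that its poset of subobjects satisfies the ascending chain condition.

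To reduce this to Cohen's theorem in the form of Lemma~\ref{lem:cohen}, let $\bR = \bk[\xi_1, \xi_2, \ldots]$ (so that $\bK$ is the fraction field of $\bR$) and set $P = \bR \otimes_{\bk} \bV^r$, an $\fS$-equivariant $\bR$-module with $\bI^r = \bK \otimes_{\bR} P$; note $P$ embeds in $\bI^r$ since $\bV^r$ is $\bk$-flat. I would then consider the map
\[
\Phi \colon \{ \text{subobjects of $\bI^r$ in $\cA$} \} \longrightarrow \{ \text{$\fS$-equivariant $\bR$-submodules of $P$} \}, \qquad N \longmapsto N \cap P,
\]
which is clearly order-preserving and well-defined, because $N \cap P$ is stable under multiplication by $\bR \subseteq \bK$ and under $\fS$ (as both $N$ and $P$ are). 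The crucial claim is that $\Phi$ is injective; concretely, $N = \bK \cdot (N \cap P)$ for every subobject $N$. To see this, take $x \in N$ and write $x = \sum_i a_i \otimes v_i$ with $a_i \in \bK$ and $v_i \in \bV^r$; clearing denominators, there is a nonzero $d \in \bR$ with $dx \in P$, and then $dx \in N \cap P$ because $N$ is a $\bK$-subspace, whence $x = d^{-1}(dx) \in \bK \cdot (N \cap P)$. Observe that $d$ need not be $\fS$-invariant, which is what would otherwise make this step delicate. Thus $N$ is recovered from $\Phi(N)$.

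Finally, Lemma~\ref{lem:cohen} asserts that $P = \bR \otimes_{\bk} \bV^r$ is a noetherian object of the category of $\fS$-equivariant $\bR$-modules, so its poset of $\fS$-equivariant submodules satisfies the ascending chain condition; transporting this along the injective, order-preserving map $\Phi$ gives the same conclusion for the poset of subobjects of $\bI^r$. Hence each $\bI^r$ is noetherian, and therefore $\cA$ is locally noetherian. The one point that requires genuine care is the injectivity of $\Phi$ --- that is, the clearing-of-denominators argument together with the observation that no $\fS$-invariant denominator is needed --- since everything else is formal once Lemma~\ref{lem:cohen}, which carries the real content (Cohen's theorem), is taken as given.
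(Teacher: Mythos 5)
Your proof is correct and takes essentially the same approach as the paper: reduce via Corollary~\ref{cor:stdquo} to the noetherianity of $\bI^r$, and transport ACC from $\bR\otimes_\bk\bV^r$ (Lemma~\ref{lem:cohen}) along the injective, order-preserving map $N\mapsto N\cap(\bR\otimes_\bk\bV^r)$. The paper leaves the injectivity of this map unjustified, whereas you spell out the clearing-of-denominators argument, which is a welcome bit of added detail.
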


\begin{proof}
By Corollary~\ref{cor:stdquo}, it suffices to show that $\bI^r$ is noetherian for each $r$. Let $V$ be as in the lemma above. Thus $\bI^r=\bK \otimes_{\bk} V$. Let $\bR=\bk[\xi_1,\xi_2,\ldots]$, so that $\bK=\Frac(\bR)$. The map
\begin{displaymath}
\{ \text{subobjects of $\bK \otimes_{\bk} V$} \} \to \{ \text{subobjects of $\bR \otimes_{\bk} V$} \}, \qquad
M \mapsto M \cap (\bR \otimes_{\bk} V)
\end{displaymath}
is injective and order-preserving. Since the target satisfies ACC (by the previous lemma), so does the source.
\end{proof}

\section{The shift theorem} \label{s:shift}

\subsection{Definitions}

We now define a functor
\begin{displaymath}
\Sigma^n \colon \cA_{\bk} \to \cA_{\bk(u_1, \ldots, u_n)},
\end{displaymath}
called the $n$th {\bf shift functor}. Let $M \in \cA_{\bk}$. The additive group underlying $\Sigma^n(M)$ is simply $M$. For notational clarity, we write $x^{\flat(n)}$ for the element $x \in M$ when we regard it as an element of $\Sigma^n(M)$. The $\bK(u_1,\ldots,u_n)$-linear structure on $M^{\flat}$ is defined as follows:
\begin{displaymath}
u_i \cdot x^{\flat(n)} = (\xi_i x)^{\flat(n)}, \qquad
\xi_i \cdot x^{\flat(n)} = (\xi_{i+n} x)^{\flat(n)}.
\end{displaymath}
For $\sigma \in \fS$, let $\sigma^{\sharp(n)} \in \fS$ be the element define by
\begin{displaymath}
\sigma^{\sharp(n)}(i) = \begin{cases}
1 & \text{if $1 \le i \le n$} \\
\sigma(i-n)+n & \text{if $i>n$} \end{cases}
\end{displaymath}
Note that $\sigma \mapsto \sigma^{\sharp(n)}$ is an isomorphism $\fS \to \fS^n$. The $\fS$-action on $\Sigma^n (M)$ is defined by
\begin{displaymath}
\sigma \cdot x^{\flat(n)}=(\sigma^{\sharp(n)} \cdot x)^{\flat(n)}.
\end{displaymath}
It is clear that the shift functor is exact and cocontinuous. We note that $\Sigma^1  (\bI^r_{\bk})$ is naturally isomorphic to $ \bI^r_{\bk(u_1)} \oplus \bI^{r-1}_{\bk(u_1)}$, and that we have a natural isomorphism $\Sigma^n \circ \Sigma^m \cong \Sigma^{n+m}$.  It immediately follows that the shift functor takes standard objects to standard objects, preserves finite generation, and does not increase generation degree.  

Let $\Omega^n \colon \cA_{\bk} \to \cA_{\bk(u_1,\ldots,u_n)}$ be the functor given by extension of scalars, that is, $\Omega(M)=\bK(u_1,\ldots,u_n) \otimes_{\bK} M$. We have a natural isomorphism $\Omega^n \circ \Omega^m \cong \Omega^{n+m}$.

We have a natural transformation $\Omega^n \to \Sigma^n$ that on a representation $M$ is the map
\begin{displaymath}
\phi_M \colon \bK(u_1,\ldots,u_n) \otimes_{\bK} M \to \Sigma^n(M), \qquad
a \otimes x \mapsto a \cdot (\tau^n x)^{\flat(n)},
\end{displaymath}
where $\tau(i)=i+1$. Note that $\tau$ is not an element of $\fS$, but $\tau$ naturally acts on any smooth $\fS$-set: for an element $x$, we define $\tau x$ to be $(1\;2\;3\;\cdots\;m) x$ for $m \gg 0$, the result being independent of $m$ for $m$ large. We verify that $\phi_M$ is indeed a morphism in $\cA_{\bk(u_1,\ldots,u_n)}$. It is clearly $\bk(u_1,\ldots,u_n)$-linear. We have 
\begin{displaymath}
\phi_M(\xi_i (a \otimes x))= \phi_M(a \otimes (\xi_i x)) = a\cdot (\tau^n \xi_i x)^{\flat(n)}=a \cdot (\xi_{i+n} \tau^n x)^{\flat(n)} = a\xi_i \cdot (\tau^n x)^{\flat(n)} = \xi_i \cdot \phi_M(a \otimes x).
\end{displaymath}
For $\sigma \in \fS$ and $x \in M$, we have $\tau^n \sigma x = \sigma^{\sharp(n)} \tau^n x$, and so 
\begin{align*}
\phi_M(\sigma (a \otimes x)) = \phi_M(\sigma(a) \otimes \sigma (x)) &= \sigma(a)\cdot(\tau^n \sigma x)^{\flat(n)} = \sigma(a) \cdot (\sigma^{\sharp(n)} \tau^n x)^{\flat(n)}\\
&=\sigma(a) \cdot (\sigma \cdot((\tau^n x)^{\flat(n)}))  =\sigma \cdot (a \cdot (\tau^n x)^{\flat(n)}) = \sigma \cdot \phi_M(a \otimes x).
\end{align*}
%
Thus $\phi_M$ is indeed a morphism in $\cA_{\bk(u_1,\ldots,u_n)}$.

\begin{proposition}
	\label{prop:injectivity-phi}
The morphism $\phi_M$ is injective. If $M$ is finitely generated then the cokernel of $\phi_M$ has strictly smaller generation degree than $M$.
\end{proposition}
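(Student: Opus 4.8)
The plan is to reduce injectivity of $\phi_M$ to the single assertion that $\tau^n$ carries $\bK$-linearly independent subsets of $M$ to $\bK$-linearly independent subsets, and then to obtain the cokernel statement from an explicit computation for $M=\bI^r$ followed by a presentation argument.

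For injectivity, I would first record how scalars act on $\Sigma^n(M)$: for any $x\in M$ and any $a$ in the scalar field $\bK(u_1,\dots,u_n)$ of $\Sigma^n(M)$ one has $a\cdot x^{\flat(n)}=(\theta(a)x)^{\flat(n)}$, where $\theta$ is the $\bk$-algebra map with $\theta(u_i)=\xi_i$ and $\theta(\xi_j)=\xi_{j+n}$ (check this on monomials, then pass to quotients). Since $\theta$ permutes the free generators, it is an isomorphism of polynomial rings, so it extends to a field isomorphism $\theta\colon\bK(u_1,\dots,u_n)\xrightarrow{\ \sim\ }\bK$. Now fix a $\bK$-basis $\{m_\alpha\}$ of $M$, so that $\{1\otimes m_\alpha\}$ is a $\bK(u_1,\dots,u_n)$-basis of $\Omega^n(M)$ and $\phi_M(1\otimes m_\alpha)=(\tau^n m_\alpha)^{\flat(n)}$. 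Then for a finite sum
\begin{displaymath}
\sum_\alpha a_\alpha\,\phi_M(1\otimes m_\alpha)=\Bigl(\sum_\alpha\theta(a_\alpha)\,\tau^n(m_\alpha)\Bigr)^{\flat(n)},
\end{displaymath}
which vanishes if and only if $\sum_\alpha\theta(a_\alpha)\tau^n(m_\alpha)=0$ in $M$; once we know the $\tau^n(m_\alpha)$ are $\bK$-linearly independent, this forces every $\theta(a_\alpha)$, hence every $a_\alpha$, to be zero. Thus $\phi_M$ sends a basis to a linearly independent set, so it is injective.

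The remaining point --- and the only genuine subtlety --- is that $\tau^n$ preserves $\bK$-linear independence. Given $\bK$-linearly independent $x_1,\dots,x_k\in M$, by smoothness they all lie in $M^{\fS^r}$ for some $r$. On $M^{\fS^r}$ the operator $\tau^n$ agrees with the action of an honest element $\gamma\in\fS$: one may take any $\gamma$ with $\gamma(i)=i+n$ for $1\le i\le r$, e.g.\ $\gamma=(1\,2\,\cdots\,r+n)^n$, since any two such choices differ by an element fixing $1,\dots,r$, which acts trivially on $M^{\fS^r}$, and this is consistent with the iterated definition of $\tau^n$. Being a genuine permutation, $\gamma$ acts on $M$ by a $\bK$-semilinear automorphism, twisting by the field automorphism $\xi_i\mapsto\xi_{\gamma(i)}$ of $\bK$, and a semilinear automorphism preserves linear independence: a relation $\sum_i c_i(\gamma x_i)=0$ yields $\sum_i\gamma^{-1}(c_i)x_i=0$. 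Hence $\{\tau^n x_i\}=\{\gamma x_i\}$ is linearly independent. I stress that $\tau$ itself is only semilinear for the \emph{non-surjective} shift endomorphism $\xi_i\mapsto\xi_{i+1}$ of $\bK$, so semilinearity alone would not suffice; the trick is that, restricted to any $M^{\fS^r}$, the operator $\tau^n$ is realized by an element of $\fS$.

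For the cokernel assertion I would first compute $\operatorname{coker}\phi_{\bI^r}$. We have $\phi_{\bI^r}(1\otimes\epsilon_r)=(\tau^n\epsilon_r)^{\flat(n)}=e_{\{n+1,\dots,n+r\}}^{\flat(n)}$, and since $\epsilon_r$ generates $\bI^r$ the element $1\otimes\epsilon_r$ generates $\Omega^n(\bI^r)$, so $\operatorname{im}\phi_{\bI^r}$ is the subobject of $\Sigma^n(\bI^r)$ generated by $e_{\{n+1,\dots,n+r\}}^{\flat(n)}$. Viewing $\Sigma^n(\bI^r)$ through the identification of $\Sigma^n$ with restriction of the $\fS$-action along $\sigma\mapsto\sigma^{\sharp(n)}$ (scalars renamed by $\theta$), it is the span of the $e_S^{\flat(n)}$ with $|S|=r$, and it decomposes as an $\fS^n$-representation according to the size $j$ of $S\cap\{1,\dots,n\}$ into $\bigoplus_{j=0}^{\min(n,r)}(\bI^{r-j}_{\bk(u_1,\dots,u_n)})^{\oplus\binom{n}{j}}$ (this also follows by iterating $\Sigma^1(\bI^r)\cong\bI^r\oplus\bI^{r-1}$); the vector $e_{\{n+1,\dots,n+r\}}^{\flat(n)}$ lies in, and generates over $\fS^n$, the $j=0$ summand, so $\operatorname{im}\phi_{\bI^r}$ is that summand and $\operatorname{coker}\phi_{\bI^r}\cong\bigoplus_{j=1}^{\min(n,r)}(\bI^{r-j}_{\bk(u_1,\dots,u_n)})^{\oplus\binom{n}{j}}$, which is standard of generation degree $r-1$. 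Now for arbitrary finitely generated $M$, put $d=g(M)$ and choose a surjection $\pi\colon P\to M$ from a standard object $P=\bigoplus_{r\le d}(\bI^r)^{\oplus m_r}$; applying the exact, cocontinuous functors $\Omega^n,\Sigma^n$ and the natural transformation $\phi$ gives a commutative square whose horizontal maps are surjective, so $\operatorname{im}\phi_M$ is the image of $\operatorname{im}\phi_P$, and $\operatorname{coker}\phi_M$ is therefore a quotient of $\operatorname{coker}\phi_P=\bigoplus_{r\le d}(\operatorname{coker}\phi_{\bI^r})^{\oplus m_r}$. The latter has generation degree at most $d-1$, hence so does its quotient $\operatorname{coker}\phi_M$, which is strictly less than $g(M)=d$.
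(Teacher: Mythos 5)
Your proof is correct, and the cokernel half is essentially the paper's argument with the ``simple computation'' for $\bI^r$ spelled out and the exact/cocontinuous reduction replaced by the equivalent presentation-and-commutative-square argument. The injectivity half, however, is organized along a genuinely cleaner route than the paper's. The paper takes a putative nonzero element $\sum a_i\otimes x_i$ of $\ker\phi_M$, arranges (after a WLOG) that the $x_i$ are $\bk(\xi_1,\ldots,\xi_m)$-linearly independent elements of $M^{\fS^m}$, proves a separate lemma that such elements are automatically $\bK$-linearly independent (by a minimal-relation argument), and then transports the vanishing relation back to $M$ using a long cycle. You instead exhibit the scalar action on $\Sigma^n(M)$ through the field isomorphism $\theta$, reduce injectivity to the statement that $\phi_M$ carries a basis to a $\bK(u_1,\ldots,u_n)$-independent set, and deduce the needed independence of $\{\tau^n m_\alpha\}$ from the one observation that, on any $M^{\fS^r}$, the operator $\tau^n$ is realized by an honest $\gamma\in\fS$, and any $\gamma\in\fS$ acts as a $\bK$-semilinear automorphism of $M$, which visibly preserves $\bK$-linear independence. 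This is the same underlying mechanism the paper uses (both arguments hinge on approximating $\tau^n$ by an actual permutation on an invariant subspace), but your packaging sidesteps the paper's auxiliary lemma entirely and avoids the WLOG bookkeeping, at the minor cost of having to spell out the $\theta$-description of the shifted scalar action. Your explicit warning that semilinearity of $\tau$ alone (for the non-surjective endomorphism $\xi_i\mapsto\xi_{i+1}$) would not suffice, and that the essential point is realizability by elements of $\fS$, is exactly the right thing to emphasize.
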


We need a lemma:

\begin{lemma}
	Let $M$ be an object of $\cA_{\bk}$. Then $\bk(\xi_1, \ldots, \xi_m)$-linearly independent elements of $M^{\fS^m}$ are $\bK$-linearly independent. 
\end{lemma}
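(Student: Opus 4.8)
The plan is to argue by contradiction via a relation of minimal length. Suppose $x_1,\dots,x_n \in M^{\fS^m}$ are linearly independent over $\bK_m := \bk(\xi_1,\dots,\xi_m)$ but linearly dependent over $\bK$. Among all nontrivial relations $\sum_i a_i x_i = 0$ with the $a_i \in \bK$, I would pick one with the fewest nonzero coefficients; after reindexing I may assume $a_1,\dots,a_k$ are the nonzero ones, and after rescaling that $a_1 = 1$.

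The key move is to apply the group $\fS^m$, which fixes every $x_i$ by hypothesis. For $\sigma \in \fS^m$, semilinearity gives $0 = \sigma\bigl(\sum_i a_i x_i\bigr) = \sum_i \sigma(a_i)\,\sigma(x_i) = \sum_i \sigma(a_i)\, x_i$. Subtracting the original relation yields $\sum_i (\sigma(a_i) - a_i)\, x_i = 0$, whose coefficient of $x_1$ is $\sigma(1) - 1 = 0$. This new relation therefore has fewer nonzero coefficients than the one we chose, so by minimality it is trivial: $\sigma(a_i) = a_i$ for all $i$ and all $\sigma \in \fS^m$. Hence each $a_i$ lies in $\bK^{\fS^m}$.

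It remains to observe that $\bK^{\fS^m} = \bK_m$, which contradicts the assumed $\bK_m$-linear independence of the $x_i$. For this, write a given nonzero $a_i$ in lowest terms as $f/g$ with $f,g \in \bk[\xi_1,\dots,\xi_N]$ for some $N \ge m$; if some variable $\xi_j$ with $m < j \le N$ appeared in $f$ or $g$, then the transposition $(j\;N{+}1) \in \fS^m$ would carry $f/g$ to a lowest-terms fraction involving $\xi_{N+1}$, hence to a different element of $\bK$, contradicting invariance. So $a_i \in \bK_m$, as desired. The point that needs care — and where the hypotheses really get used — is that $M^{\fS^m}$ is only a $\bK_m$-vector space, not a $\bK$-vector space, so the statement is not formal; the whole argument hinges on $\fS^m$ being \emph{infinite}, which is exactly what makes its fixed field in $\bK$ the small field $\bK_m$ rather than the much larger field of rational functions symmetric in $\xi_{m+1}, \xi_{m+2}, \dots$.
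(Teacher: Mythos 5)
Your proof is correct and takes essentially the same route as the paper: pick a nontrivial $\bK$-relation with fewest nonzero coefficients, normalize one to $1$, apply $\sigma-1$ for $\sigma \in \fS^m$ to kill that coefficient and shorten the relation, and conclude via minimality and the identity $\bK^{\fS^m}=\bK_m$. The only difference is presentational: the paper phrases it as ``some $a_i \notin \bK_m$, hence some $\sigma$ moves it, giving a shorter nonzero relation, contradicting minimality,'' and leaves $\bK^{\fS^m}=\bK_m$ implicit, whereas you take the contrapositive (minimality forces all $a_i$ to be $\fS^m$-fixed, then $\bK^{\fS^m}=\bK_m$ contradicts the $\bK_m$-independence) and spell out the lowest-terms argument for $\bK^{\fS^m}=\bK_m$, which is a welcome bit of extra rigor.
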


\begin{proof}
	Let $x_1, \ldots, x_r \in M^{\fS^m}$ be $\bk(\xi_1, \ldots, \xi_m)$-linearly independent. Suppose we have a linear dependence
	\begin{displaymath}
	\sum_{i=1}^s a_i x_i = 0
	\end{displaymath}
	with $a_i \in \bK$, and choose one with $s$ minimal. Dividing by $a_s$, we can assume $a_s=1$. Now, there is some $i$ for which $a_i \not\in \bk(\xi_1, \ldots, \xi_m)$. We can therefore find $\sigma \in \fS^n$ such that $\sigma a_i \ne a_i$. Applying $\sigma-1$ to our relation thus gives a smaller non-zero relation, a contradiction.
\end{proof}

\begin{proof}[Proof of Proposition~\ref{prop:injectivity-phi}] We first show that $\phi_M$ is injective. Suppose that $\sum_{i=1}^r a_i \otimes x_i$ is a nonzero element that belongs to the kernel of $\phi_M$. Let $m \gg 0$ be sufficiently large so that the $x_i$ and $a_i$ are all $\fS^m$-invariant. Without loss of generality, we may assume that $x_i$ are $\bk(\xi_1, \ldots, \xi_m)$-linearly independent. Letting $\sigma=(1\;2\;\cdots\;m+n)$, we have $\tau^n x_i=\sigma^n x_i$ for all $i$ (where $\tau(i) = i+1$, as defined above). Thus
	\begin{displaymath}
	0=\phi_M \left( \sum_{i=1}^r a_i \otimes x_i \right) = \sum_{i=1}^r a_i \cdot (\sigma^n x_i)^{\flat(n)} = \sum_{i=1}^r a_i \cdot (\sigma^n x_i)^{\flat(n)}.
	\end{displaymath} Let $b_i$ be the rational funtion obtained by replacing $u_j$ with $\xi_{m + j}$ for $1 \le j \le n$. Applying $\sigma^{-n}$ to the equation above, we find
	\begin{displaymath}
	\sum_{i=1}^r b_i x_i = 0. 
	\end{displaymath} This contradicts the lemma above. Thus the kernel of $\phi_M$ is trivial, completing the proof.

We now show that the cokernel of $\phi_M$ has strictly smaller generation degree than $M$. For this, note that the functor taking $M$ to the cokernel of $\phi_M$ is exact and cocontinuous. It therefore suffices to prove the assertion for $M=\bI^r_{\bk}$, which is a simple computation.
\end{proof}

\subsection{The shift theorem}

We say that $M \in \cA_{\bk}$ has {\bf stable generation degree $\le n$} if some iterated shift of $M$ has generation degree $\le n$. It is clear that $\bI^r_{\bk}$ has stable generation degree $r$. The following is the main result on this invariant:

\begin{proposition}
Any proper quotient of $\bI^r_{\bk}$ has stable generation degree $<r$.
\end{proposition}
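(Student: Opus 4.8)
The plan is to introduce a ``derivative'' functor on $\cA$ and run an induction on $r$.

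\smallskip
\textbf{Setup.} Write $\bk'=\bk(u_1)$ and let $\Phi\colon\cA_\bk\to\cA_{\bk'}$ be the functor sending $M$ to $\Sigma^1(M)/\Omega^1(M)$, where $\Omega^1(M)$ is viewed as a subobject of $\Sigma^1(M)$ via the injection $\phi_M$. It is exact and cocontinuous (this is the remark used in the proof of Proposition~\ref{prop:injectivity-phi}), and since $\Phi$, $\Sigma^1$, $\Omega^1$ commute with one another up to natural isomorphism, $\Phi$ commutes with all iterated shifts. From the isomorphism $\Sigma^1(\bI^r_\bk)\cong\bI^r_{\bk'}\oplus\bI^{r-1}_{\bk'}$ and the formula for $\phi$ one checks directly that $\Phi(\bI^r_\bk)\cong\bI^{r-1}_{\bk'}$ (so $\Phi(\bK)=0$); thus $\Phi$ lowers ``degree'' by one, and iterating, $\Phi^r$ carries $\bI^r_\bk$ to $\bK_{\bk(u_1,\dots,u_r)}$ and carries $\bI^s_\bk$ to $0$ for $s<r$.

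\smallskip
\textbf{The main lemma.} The crux is the following sharpening of Proposition~\ref{prop:injectivity-phi}: \emph{if $M\in\cA_\bk$ is finitely generated with $g(M)\ge1$, then $g(\Phi M)=g(M)-1$.} The bound $g(\Phi M)\le g(M)-1$ is exactly Proposition~\ref{prop:injectivity-phi}. For the reverse bound I would pick a surjection $\bigoplus_{j\le d}(\bI^j_\bk)^{\oplus a_j}\twoheadrightarrow M$ with $d=g(M)$ and $a_d\ge1$, apply the exact functor $\Phi^{d-1}$ to realize $\Phi^{d-1}(M)$ as a quotient of $(\bI^1_{\bk(u_1,\dots,u_{d-1})})^{\oplus a_d}\oplus(\bI^0)^{\oplus a_{d-1}}$, and then argue that the images of the $a_d$ copies of $\bI^1$ are not redundant, so that $\Phi^{d-1}(M)$ is not generated in degree $0$ and hence has generation degree exactly $1$; the upper bound then propagates to give $g(\Phi^kM)=d-k$ for all $k\le d$. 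The non-redundancy is a Nakayama-type statement about minimal presentations in $\cA$; it is here --- and, more basically, to know that generation degree is subadditive along short exact sequences and that the functors $M\mapsto M/\langle M^{\fS^k}\rangle$ are right exact --- that one uses the vanishing $H^1(\fS^k,V)=0$ for every smooth semilinear representation $V$. This vanishing follows from Galois descent (over each finite $\fS_m$ the module $V$ is a sum of copies of the trivial semilinear representation, so $H^i(\fS_m,V)=0$ for $i>0$ by the normal basis theorem) together with the fact that the tower $(V^{\fS_m})_m$ consists of the inclusions $V^{\fS_{m+1}}\subseteq V^{\fS_m}$, hence is Mittag--Leffler, killing the $\varprojlim^1$ that contributes to $H^1(\fS,V)$. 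I expect the reverse bound in this lemma to be the principal difficulty.

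\smallskip
\textbf{Conclusion by induction on $r$.} The case $r=0$ is vacuous, as $\bI^0=\bK$ is simple. Let $r\ge1$ and $Q=\bI^r_\bk/N$ with $N\ne0$; we may assume $g(Q)=r$, since otherwise $g(\Sigma^nQ)\le g(Q)<r$ already. As $N$ is a nonzero subobject of $\bI^r_\bk$, it has $g(N)\ge r\ge1$: indeed $(\bI^r_\bk)^{\fS^k}=0$ for $k<r$, because a nonzero invariant would be supported on a nonempty finite union of $\fS^k$-orbits on the set of $r$-element subsets of $[\infty]$, and every such orbit is infinite when $k<r$; hence no nonzero subobject of $\bI^r_\bk$ is generated in degree $<r$. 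By the main lemma $\Phi N\ne0$, so applying the exact functor $\Phi$ to $0\to N\to\bI^r_\bk\to Q\to0$ exhibits $\Phi Q$ as a \emph{proper} quotient of $\Phi(\bI^r_\bk)\cong\bI^{r-1}_{\bk'}$. By the inductive hypothesis $\Phi Q$ has stable generation degree $<r-1$, so $g(\Sigma^m(\Phi Q))\le r-2$ for some $m$; as $\Phi$ commutes with shifts, $g(\Phi(\Sigma^mQ))\le r-2$, and then the main lemma applied to $\Sigma^mQ$ (whose generation degree is at most $r$) forces $g(\Sigma^mQ)\le r-1<r$. Thus $Q$ has stable generation degree $<r$, completing the induction.
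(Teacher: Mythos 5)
Your approach is genuinely different from the paper's, but it has a real gap at its centre.

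The key lemma you rely on — that the exact functor $\Phi = \Sigma^1/\Omega^1$ satisfies $g(\Phi M) = g(M)-1$ whenever $g(M)\ge 1$ — is not proved. The upper bound is Proposition~\ref{prop:injectivity-phi}, fine. But for the lower bound the ``non-redundancy'' / Nakayama-type argument you sketch does not go through, and I do not think it can be repaired cheaply. The basic obstruction is that $\Phi$ is very far from faithful: it kills $\bI^0$, and more generally it can collapse a minimal generator. For instance $\Phi^{d-1}$ sends $(\bI^{d})^{\oplus a_d}\oplus(\bI^{d-1})^{\oplus a_{d-1}}\oplus\cdots$ to $(\bI^1)^{\oplus a_d}\oplus(\bI^0)^{\oplus a_{d-1}}$, and you then need the $a_d$ copies of $\bI^1$ to remain non-redundant in the image; but ``non-redundant'' here means exactly that $\Phi^{d-1}(M)$ is not generated in degree $0$, which is the same kind of claim you are trying to establish. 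The ``main lemma'' is, in effect, a reformulation of the proposition itself (together with the observation that $\Phi$ commutes with $\Sigma$), so the inductive step is resting on an unproved statement of essentially the same strength. Your proposed $H^1$-vanishing for smooth semilinear representations is also not substantiated: the tower $(V^{\fS^m})_m$ does not obviously give the Mittag--Leffler system you want, and the normal-basis-theorem step controls $H^i(\fS_m, V^{\fS^{?}})$, not the colimit cohomology, so this needs more care (and is not actually what the paper's \S\ref{ss:coh} cocycle argument does).

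The paper's own proof is much more elementary and avoids all of this. Pick a nonzero $x\in K$ and, after applying a suitable permutation and scaling, arrange (for some $n$) that $x$ is a $\bK$-linear combination of basis vectors $e_S$ with $S\subseteq[n+r]$ in which $e_{\{n+1,\dots,n+r\}}$ occurs with coefficient $1$. After applying $\Sigma^n$, the summand $\bI^r_{\bk(u_1,\dots,u_n)}$ of $\Sigma^n(\bI^r)$ is generated by $e_{\{n+1,\dots,n+r\}}$, and the complement $C$ is standard of generation degree $<r$. The relation coming from $x$ forces the image of $e_{\{n+1,\dots,n+r\}}$ in $\Sigma^n(\bI^r/K)$ to lie in the image of $C$; hence $\Sigma^n(\bI^r/K)$ is a quotient of $C$ and has generation degree $<r$. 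No auxiliary functor, no cohomology, no Nakayama-type statement. I would recommend either proving your main lemma rigorously (in which case, note that for $M$ a proper quotient of $\bI^1$ one in fact has $g(M)=0$, so the lemma is vacuous in that boundary case but its verification in higher degree is the entire content of the statement), or adopting the paper's direct ``monic element after shifting'' argument.
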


\begin{proof}
Consider a quotient $\bI^r_{\bk}/K$ with $K$ a non-zero submodule. It is not hard to see that, for some $n$, $K$ contains some element that is a linear combination of basis vectors corresponding to subsets of $[n+r]$ with the basis vector corresponding to $\{n+1,\ldots,n+r\}$ having coefficient~1. The $n$-fold shift of $\bI^r_{\bk}$ decomposes as $\bI^r_{\bk(u_1,\ldots,u_n)} \oplus C$ where $C$ is a standard object generated in degree less than $r$; moreover, the copy of $\bI^r_{\bk(u_1,\ldots,u_n)}$ is generated by the basis vector corresponding to $\{n+1,\ldots,n+r\}$. We thus see that the $n$-fold shift of $\bI^r_{\bk}/K$ is a quotient of $C$, which proves the theorem.
\end{proof}

\begin{theorem} \label{thm:shift}
If $M$ is a finitely generated object of $\cA_{\bk}$ then $\Sigma^n(M)$ is semi-standard for some $n$.
\end{theorem}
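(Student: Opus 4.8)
The plan is to prove, for every base field $\bk$ simultaneously, the statement ``if $M \in \cA_{\bk}$ is finitely generated with $g(M) \le g$ then $\Sigma^n(M)$ is semi-standard for $n \gg 0$'' by induction on $g$. Before starting I would record two elementary closure properties used constantly: since $\Sigma^n$ is exact, carries standard objects to standard objects, and satisfies $\Sigma^n \circ \Sigma^m \cong \Sigma^{n+m}$, the class of semi-standard objects is closed under $\Sigma^n$; and it is closed under extensions (given $0 \to A \to B \to C \to 0$ with $A, C$ semi-standard, pull back a standard filtration of $C$ and prepend one of $A$). For $g = 0$ the statement holds with no shift at all: $M$ is generated by $M^{\fS}$, so the natural surjection $\bK \otimes_{\bk} M^{\fS} \to M$ is also injective by the Lemma above (applied with $m = 0$), and therefore $M$ is a direct sum of copies of $\bI^0$.

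For the inductive step I would take $M$ with $g(M) = g \ge 1$ (the case $g(M) < g$ being immediate from the outer hypothesis). Since $M^{\fS^g}$ generates $M$ and, by Proposition~\ref{prop:galois}, is spanned over $\bK_g$ by its $\fS_g$-invariants, finite generation of $M$ lets me choose finitely many elements of $M^{\fS_g \times \fS^g}$ generating $M$; via Proposition~\ref{prop:stdmap} they give a surjection $\pi \colon (\bI^g)^{\oplus m} \to M$ with $m \ge 1$. I would then induct on $m$. Writing $K = \ker \pi$, letting $p$ be the projection onto the last summand, $E = \ker p = (\bI^g)^{\oplus(m-1)}$, $K_1 = p(K) \subseteq \bI^g$, $L = K \cap E$, there are two cases. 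If $K_1 = 0$ then $K \subseteq E$, so $M \cong \bI^g \oplus (E/K)$ with $E/K$ a quotient of $(\bI^g)^{\oplus(m-1)}$; its generation degree is at most $g$, and a high shift of it is semi-standard either by the outer hypothesis (if that degree is $< g$) or by the inner hypothesis (if it equals $g$, since $E/K$ is then a quotient of fewer than $m$ copies of $\bI^g$), hence so is a high shift of $M$. If $K_1 \ne 0$ then $\bI^g/K_1$ is a \emph{proper} quotient of $\bI^g$, so by the proposition just preceding the theorem it has stable generation degree $< g$; and since the composite $(\bI^g)^{\oplus m} \to \bI^g \to \bI^g/K_1$ has kernel $K + E$, there is a short exact sequence
\[
0 \longrightarrow E/L \longrightarrow M \longrightarrow \bI^g/K_1 \longrightarrow 0 .
\]
I would then pick $N$ large enough that both $\Sigma^N(\bI^g/K_1)$ — using the outer hypothesis on a shift of $\bI^g/K_1$ of generation degree $< g$ — and $\Sigma^N(E/L)$ — exactly as in the previous case — are semi-standard, and conclude by closure under extensions that $\Sigma^N(M)$ is semi-standard.

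I expect the main difficulty to be organizational rather than computational: arranging the nested induction so that it is well-founded. The three facts that make it work are (i) local noetherianity of $\cA$ (Proposition~\ref{prop:noeth}), or equivalently compactness of $M$, which is what permits presenting $M$ as a quotient of a \emph{finite} sum $(\bI^g)^{\oplus m}$ and not merely of some standard object; (ii) the proposition just preceding the theorem, which is precisely the mechanism dropping the generation degree on the quotient side of the ``$K_1 \ne 0$'' branch; and (iii) checking that every branch recurses either with strictly smaller generation degree (so the outer hypothesis applies, after first passing if necessary to a shift that realizes that smaller degree) or with $g$ fixed and strictly fewer copies of $\bI^g$. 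One sanity check worth doing early is the degenerate case $m = 1$, where $E = 0$ and the argument collapses to either $M \cong \bI^g$ or $M \cong \bI^g/K$ with $K \ne 0$. Note that, unlike the structural results of the next section, this plan uses neither the scalar-extension functor $\Omega^n$ nor the comparison map $\phi_M$.
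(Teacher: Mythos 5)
Your argument is correct and follows essentially the same route as the paper's proof: an induction on (stable) generation degree driven by the proposition immediately preceding the theorem, which says a proper quotient of $\bI^g$ has smaller stable generation degree. The only organizational difference is that where you run an inner induction on the number $m$ of copies of $\bI^g$ presenting $M$, the paper instead filters $M$ so that each graded piece is a quotient of a \emph{single} $\bI^g$ and then invokes closure of semi-standard objects under extensions and shifts---the same idea packaged slightly differently.
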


\begin{proof}
We proceed by induction on stable generation degree. Thus suppose the theorem has been proven in stable generation degree $<g$, and let us prove it in degree $g$. Let $M$ be given of stable generation degree $g$. We may as well replace $M$ by a shift, and assume that $M$ is generated in degree $g$. In fact, we may assume that $M$ is a quotient of $\bI^g_{\bk}$, as $M$ is admits a finite filtration with graded pieces of this form. Now, if $M=\bI^g_{\bk}$ then there is nothing to prove. Otherwise, $M$ has strictly smaller generation degree by the previous theorem, and we are done by the inductive hypothesis.
\end{proof}

\section{The structure of semilinear representations} \label{s:std}

\subsection{A cohomology calculation} \label{ss:coh}

\begin{lemma}
Let $f$ be a rational function in two variables over $\bk$ satisfying \[f(x,y) + f(y,z) = f(x, z).\] Then $f(x,z) = g(x) + h(z)$ for some one variable rational functions $g, h$ over $\bk$.
\end{lemma}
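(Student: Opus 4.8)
The plan is to exploit the functional equation $f(x,y) + f(y,z) = f(x,z)$ as a cocycle condition: it says that $f$ is a "coboundary" in a suitable sense, and the content of the lemma is that the splitting can be achieved within the ring of rational functions rather than just abstractly. First I would set $g(x) := f(x,c)$ and $h(z) := f(c,z)$ for some conveniently chosen constant $c \in \bk$ (assuming $\bk$ is infinite, or passing to an infinite extension and descending at the end, or choosing $c$ to be a transcendental); then the relation with $y = c$ reads $f(x,c) + f(c,z) = f(x,z)$, which is exactly $f(x,z) = g(x) + h(z)$. So morally the proof is a one-line substitution.

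The real work is therefore in justifying the substitution $y = c$: a priori $f$ is only a rational function, so it may have a pole along some hypersurface, and one must check that specializing the middle variable is legitimate. My approach would be: write $f(x,y) = p(x,y)/q(x,y)$ in lowest terms. The functional equation, rearranged as $f(x,y) = f(x,z) - f(y,z)$, shows that for a fixed generic value $z = c$ the function $f(x,y)$ agrees with $g(x) + h(z)$-type expression; more carefully, I would argue that the poles of $f(x,y)$ in the variable $y$ are controlled. Fix $x = a$ generic. Then $y \mapsto f(a,y)$ is a rational function of $y$; from $f(a,y) = f(a,z) - f(y,z)$ with $z$ generic, the $y$-dependence of the right side comes only through $-f(y,z)$, i.e. the polar locus of $y\mapsto f(a,y)$ equals the polar locus of $y \mapsto f(y,z)$ for generic $z$. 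Symmetrically, viewing $f(x,z) = f(x,y) + f(y,z)$, the polar locus of $x \mapsto f(x,z)$ equals that of $x \mapsto f(x,y)$ for generic $y$. Setting $x=y$ formally in these descriptions shows the pole locus in each variable is "constant", hence $f$ has the form $g(x) + h(z) + (\text{a genuinely two-variable part with no poles})$; but a two-variable rational function with no poles in either variable separately, satisfying the additive relation, forces the cross term to vanish. Alternatively, and perhaps more cleanly, I would pick $c \in \bk$ (enlarging $\bk$ if necessary) avoiding the finitely many "bad" values — namely those $c$ for which $q(x,c)$ or $q(c,z)$ is identically zero, of which there are only finitely many since $q$ is a nonzero polynomial — and then verify directly that $f(x,c) + f(c,z)$ and $f(x,z)$ are rational functions agreeing on a Zariski-dense set, hence equal.

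The key steps in order: (1) reduce to the case $\bk$ infinite by a base-change argument, noting that if $f \in \bk(x,z)$ and $f = g' + h'$ with $g', h' \in \bk'(x), \bk'(z)$ for an extension $\bk'$, then one can modify $g', h'$ by a constant to land back in $\bk$ (subtract $g'(a)$ from $g'$ and add it to $h'$ for $a$ a fixed element of $\bk$, then use that the difference $g' - (\text{its value})$ is forced into $\bk(x)$ since $f \in \bk(x,z)$; this is a standard descent point); (2) choose $c \in \bk$ avoiding the finitely many values where specialization fails; (3) set $g(x) = f(x,c)$, $h(z) = f(c,z)$ and substitute $y = c$ into the functional equation — which is valid because both sides are rational functions defined at $y=c$ for generic $x,z$ — to conclude $f(x,z) = g(x) + h(z)$.

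I expect the main obstacle to be step (2)/(3): cleanly articulating why specializing the middle variable $y=c$ in the rational identity $f(x,y)+f(y,z)=f(x,z)$ is legitimate, i.e. controlling where the denominator of $f$ can vanish and ensuring a good choice of $c$ exists. Once one commits to the (elementary) fact that a nonzero polynomial $q(x,y)$ has $q(x,c) \not\equiv 0$ for all but finitely many $c \in \bk$, the argument is essentially a density-of-specializations argument and the rest is bookkeeping. The base-field descent in step (1) is a minor technical wrinkle that I would dispatch quickly.
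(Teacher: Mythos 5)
Your approach is essentially the same as the paper's: specialize $y$ to a suitable $c$ in an infinite extension of $\bk$ (the paper takes $c \in \ol{\bk}$, you suggest $\ol{\bk}$ or a transcendental), getting $g(x)=f(x,c)$, $h(z)=f(c,z)$, and then descend to $\bk$. The paper's descent uses Galois invariance (and, read strictly, requires the same normalize-by-a-constant step you describe, since a priori one only gets $\sigma g - g \in \ol{\bk}$), so the two arguments are in substance identical.
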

\begin{proof}
Since $\ol{\bk}$ is infinite, we can pick a value of $y$ in $\ol{\bk}$ such that denominators in $f(x,y)$ and $f(y,z)$ do not vanish. Plugging this value in the given relation, we see that $f(x,z) = g(x) + h(z)$ for some one variable rational functions $g, h$ over $\ol{\bk}$. Suppose $\sigma \in \Aut(\ol{\bk} / \bk) $, then we have $\sigma(g(x)) + \sigma(h(z)) = g(x) + h(z)$. Since $\ol{\bk}(x) \cap \ol{\bk}(z) = \ol{\bk}$, we conclude that $\sigma g(x) = g(x)$ and $\sigma h(z) = h(z)$. So $g(x)$ and $h(z)$ are defined over $\bk$, completing the proof.
\end{proof}

\begin{proposition}
Let $c \colon \fS \to \bI^r$ be a 1-cocycle for which there exists $n$ such that $c(\sigma)=0$ for all $\sigma \in \fS^n$. Then $c$ is a coboundary.
\end{proposition}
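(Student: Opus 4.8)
The plan is to induct on $n$. The heart of the matter is the case $n=1$, which when $r=0$ comes down to the rational function lemma just proved; the inductive step will use the shift functor to drop the vanishing level from $n$ to $n-1$.

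For the inductive step, assume $n \ge 2$. The isomorphism $\fS \to \fS^{n-1}$, $\sigma \mapsto \sigma^{\sharp(n-1)}$, lets us regard $c|_{\fS^{n-1}}$ as a $1$-cocycle $c^{\flat}$ on $\fS$ valued in $\Sigma^{n-1}(\bI^r_{\bk})$, an object of $\cA_{\bk(u_1,\dots,u_{n-1})}$ which is standard since the shift functor preserves standardness. Since $\sigma^{\sharp(n-1)}$ fixes $n$ precisely when $\sigma(1)=1$, the hypothesis that $c$ vanishes on $\fS^n$ says exactly that $c^{\flat}$ vanishes on $\fS^1$. Applying the $n=1$ case over $\bk(u_1,\dots,u_{n-1})$, one standard summand of $\Sigma^{n-1}(\bI^r_{\bk})$ at a time, we learn that $c^{\flat}$ is a coboundary; unwinding this produces $v \in \bI^r_{\bk}$ with $c(\tau)=\tau v - v$ for all $\tau \in \fS^{n-1}$. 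Subtracting the global coboundary $\sigma \mapsto \sigma v - v$ from $c$, we may assume $c$ vanishes on all of $\fS^{n-1}$, and then the inductive hypothesis finishes.

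It remains to handle $n=1$. Since $c$ vanishes on $\fS^1$, the cocycle identity forces $c(\sigma)$ to depend only on $\sigma(1)$; write $c_i$ for the value when $\sigma(1)=i$, so $c_1=0$ and $c_{\sigma(i)}=c_{\sigma(1)}+\sigma c_i$. Feeding the cocycle identity an element $\rho$ fixing both $1$ and $i$ (so $\rho \in \fS^1$ and $\rho\sigma$ still sends $1 \mapsto i$) shows $\rho c_i = c_i$; hence $c_i$ is fixed by the pointwise stabilizer of $\{1,i\}$, so it is supported on the basis vectors $e_S$ with $S \subseteq \{1,i\}$ and has coefficients in $\bk(\xi_1,\xi_i)$. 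In particular $c \equiv 0$ as soon as $r \ge 3$. For $r \le 2$ everything is pinned down by $c_2$ (one has $c_i = \sigma c_2$ for any $\sigma$ fixing $1$ with $\sigma(2)=i$), and the cocycle identity applied to suitable products of $(1\;2),(1\;3),(2\;3)$ yields the two relations $c_2 + (1\;2)c_2 = 0$ and $c_2 = (2\;3)c_2 + (1\;3)c_2$. Solving these: when $r=2$, linear independence of $e_{\{1,2\}},e_{\{1,3\}},e_{\{2,3\}}$ in the second relation forces $c_2=0$; when $r=1$, the first relation gives $c_2 = p(\xi_1,\xi_2)e_{\{1\}} - p(\xi_2,\xi_1)e_{\{2\}}$ and then the second forces $p$ to be independent of its second argument, so $c_2 = P(\xi_1)e_{\{1\}} - P(\xi_2)e_{\{2\}}$; when $r=0$, writing $c_2 = f(\xi_1,\xi_2)$ the second relation becomes $f(x,z)=f(x,y)+f(y,z)$, so the lemma gives $f(x,z)=g(x)-g(z)$ and hence $c_2 = g(\xi_1)e_{\emptyset} - g(\xi_2)e_{\emptyset}$. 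In each of these three cases the evident $\fS^1$-fixed element of $(\bI^r)^{\fS^1}$ (namely $0$ when $r=2$, $-P(\xi_1)e_{\{1\}}$ when $r=1$, and $-g(\xi_1)e_{\emptyset}$ when $r=0$) serves as $v$: being $\fS^1$-fixed, $\sigma v - v$ depends only on $\sigma(1)$, and a quick check gives $\sigma v - v = c(\sigma)$.

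I expect the main obstacle to be exactly the base case $n=1$, $r=0$: there the lone basis vector $e_{\emptyset}$ gives no linear-algebra leverage, and the rational function lemma is genuinely needed. The surrounding scaffolding---the shift reduction and the passage from general $r$ to $r \le 2$---should be routine, the one place to be careful being the bookkeeping of which subgroup of $\fS$ the functor $\Sigma^{n-1}$ turns $\fS^n$ into.
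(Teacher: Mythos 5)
Your proof is correct and follows essentially the same route as the paper's: the same two cocycle relations (one from $(1\;2)^2=1$, one from $(1\;2)(2\;3)=(1\;3)(1\;2)$), the same four-way case split on $r$ with the rational-function lemma carrying the $r=0$ case, and the same shift-functor reduction from general $n$ to $n=1$. Packaging the analysis via $c_i=c(\sigma)$ for $\sigma(1)=i$ and noting that $c_i$ is fixed by the pointwise stabilizer of $\{1,i\}$ is a mild (and slightly cleaner) reorganization of the paper's direct computation with $c((1\;2))$, not a different argument.
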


\begin{proof}
First suppose that $n=1$, i.e., $c(\sigma)=0$ for $\sigma \in \fS^1$. Suppose $m \ge 1$. Assume that $\sigma \in \fS_m$ and $\tau \in \fS^m \subset \fS^1$. Then we have \[ \tau c(\sigma) = c(\tau \sigma) = c(\sigma \tau) = c(\sigma).   \] This shows that $c(\sigma)$ is an $\fS^m$ invariant. In particular, $c((1\;2))$ is invariant under $\fS^2$. We also have the following equations: \begin{align*}
&0= c(1) = c((1\;2)(1\;2)) = c((1\;2)) + (1\; 2)c((1\;2)) \\
&c((1\;2)) = c((1\;2)(2\;3)) = c((1\;3)(1\;2)) = c((1\;3)) + (1\;3) c((1\;2)).
\end{align*} We now proceed in cases:

\textit{Case 1: $r=0$.} Suppose $c((1\;2)) = f(\xi_1, \xi_2)$. From the equations above, we have $f(\xi_1, \xi_2) = -f(\xi_2, \xi_1)$, and $f(\xi_1, \xi_3) + f(\xi_3, \xi_2) = f(\xi_1, \xi_2)$. Thus, by the previous lemma, $f(\xi_1, \xi_2)$ is of the form $g(\xi_1) - g(\xi_2)$ for some $g(\xi_1) \in \bk(\xi_1)$. For $n \ge 2$, we have \[ c((1\;n)) = c((2\;n)(1\;2)(2\;n)) = (2 \; n) f(\xi_1, \xi_2) = f(\xi_1, \xi_n) = g(\xi_1) - (1\;n) g(\xi_1).  \]  But then $c$ agrees with the coboundary $g(\xi_1) - \sigma g(\xi_1)$ on all transpositions, and so $c$ is a coboundary. 

\textit{Case 2: $r=1$.} The $\fS^2$-invariants of $\bI^1$ consist of $\bk(\xi_1,\xi_2)$-linear combinations of $e_1$ and $e_2$. We can thus write $c((1\;2))=f(\xi_1,\xi_2) e_1 + g(\xi_1,\xi_2) e_2$. From the equations above, we see that
\begin{align*}
f(\xi_1,\xi_2) e_1 + g(\xi_1,\xi_2) e_2 &=  -g(\xi_2, \xi_1)e_1 - f(\xi_2, \xi_1)e_2 \\ 
f(\xi_1,\xi_2)e_1 + g(\xi_1,\xi_2) e_2&= f(\xi_1,\xi_3)e_1+g(\xi_1,\xi_3)e_3 + f(\xi_3,\xi_2) e_3+g(\xi_3,\xi_2) e_2.
\end{align*} We thus conclude that $f(\xi_1,\xi_2)=h(\xi_1)$, for some $h$, and $g(\xi_1,\xi_2)=-h(\xi_2)$. As in the previous case, we see that $c$ agrees with the coboundary  $h(\xi_1) e_1 - \sigma h(\xi_1) e_1$ on all transpositions, and so $c$ is a coboundary.

\textit{Case 3: $r=2$.} We have $c((1\;2))=f(\xi_1,\xi_2)e_{1,2}$. From the equations above, we see that
\begin{align*}
f(\xi_1,\xi_2)e_{1,2} &= - f(\xi_2, \xi_1)e_{1,2} \\
f(\xi_1,\xi_2)e_{1,2} &= f(\xi_1,\xi_3) e_{1,3} + f(\xi_3,\xi_2) e_{2,3}.
\end{align*}
Obviously, we must have $f=0$, and so $c=0$.

\textit{Case 4: $r>2$.} In this case, $\bI^r$ has no $\fS^2$-invariants, and so $c=0$.

We have thus completed the $n=1$ case. We now treat the general case. It suffices to show that $c \vert_{\fS^{n-1}}$ is a coboundary. Relabeling (note that the $(n-1)$-fold shift of $\bI^r_{\bk}$ is a standard object in $\cA_{\bk(u_1,\ldots, u_{n-1})}$), we are back to the $n=1$ case.
\end{proof}

\begin{remark}
The proposition can be interpreted as saying that a smooth version of the first cohomology of $\fS$ with coefficients in $\bI^r$ vanishes.
\end{remark}

\subsection{An Ext calculation}

\begin{proposition}
We have $\Ext^1(\bI^r, \bI^s)=0$ for all $r,s$.
\end{proposition}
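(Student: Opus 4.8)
The plan is to reduce the statement to the cohomology vanishing result just proved. An extension $0 \to \bI^s \to E \to \bI^r \to 0$ in $\cA$ is, by Proposition~\ref{prop:adjunction}, also an extension of smooth $\fS$-representations (since $\bI^s$ is injective in $\cA$, hence injective as a smooth $\fS$-representation — wait, that direction goes the wrong way; rather, the point is that $\cA$ has enough injectives via standard objects and the adjunction relates $\Hom_\cA$ to $\Hom_\fS$). More directly: to split $E$ as an object of $\cA$, it suffices by Proposition~\ref{prop:adjunction} to produce an $\fS$-equivariant section of the underlying map of smooth $\fS$-representations $E \to \bI^r$, since such a section $\bV^r \to E$ extends uniquely to a $\bK$-linear $\fS$-equivariant map $\bI^r \to E$ splitting the sequence in $\cA$. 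So the task is: given the extension, find a set-theoretic $\bK$-linear lift $\ell \colon \bI^r \to E$ of the identity (which exists, $E \cong \bI^s \oplus \bI^r$ as $\bK$-vector spaces), and correct it to an $\fS$-equivariant one.

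First I would fix a $\bK$-linear splitting $\ell \colon \bI^r \to E$ of the surjection, and define $c(\sigma) = \sigma(\ell(\sigma^{-1} x)) - \ell(x)$ appropriately — more precisely, set $c(\sigma) = \sigma \circ \ell \circ \sigma^{-1} - \ell$ as a map $\bI^r \to \bI^s \subset E$. Because $\ell$ is $\bK$-linear and $\sigma$ acts semilinearly compatibly on both sides, $c(\sigma)$ is actually $\bK$-linear, hence by Proposition~\ref{prop:stdmap} (applied with $M = \bI^s$, but we need all of $\Hom_\cA(\bI^r,\bI^s)$; the point is $c(\sigma)$ is determined by $c(\sigma)(\epsilon_r) \in (\bI^s)^{\fS_r \times \fS^r}$) the map $c(\sigma)$ is determined by the single element $\tilde{c}(\sigma) := c(\sigma)(\epsilon_r) \in \bI^s$. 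One checks $\tilde{c} \colon \fS \to \bI^s$ is a $1$-cocycle for the $\fS$-action on $\bI^s$. Moreover, since $\ell$ can be chosen so that $\ell(\epsilon_r)$ is fixed by $\fS_r \times \fS^r$ (because $\epsilon_r$ is, and $E^{\fS^n}$ surjects onto $(\bI^r)^{\fS^n}$ by Galois descent / Proposition~\ref{prop:galois} applied to the $\bK_n$-semilinear $\fS_n$-representation in each degree), we get $\tilde{c}(\sigma) = 0$ for $\sigma \in \fS^r$. Hence by the previous Proposition, $\tilde c$ is a coboundary: $\tilde c(\sigma) = \sigma(v) - v$ for some $v \in \bI^s$. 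Then $\ell'(\epsilon_r) := \ell(\epsilon_r) - v$ defines (by extending $\bK$-linearly and $\fS$-equivariantly, legitimate precisely because $\ell(\epsilon_r) - v$ is $\fS_r \times \fS^r$-invariant — one must check $v$ can be taken $\fS_r \times \fS^r$-invariant, or that the correction is still equivariant) an $\fS$-equivariant splitting $\bI^r \to E$ in $\cA$.

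The main technical obstacle I anticipate is the bookkeeping around invariance of the chosen lift and the cocycle: I need $\ell(\epsilon_r)$ to be $\fS_r \times \fS^r$-fixed so that Proposition~\ref{prop:stdmap} genuinely produces a map $\bI^r \to \bI^s$ from $\tilde c(\sigma)$, and I need the coboundary-trivializing element $v$ to interact correctly with the $\fS_r \times \fS^r$-action so that $\ell(\epsilon_r) - v$ still defines a valid morphism in $\cA$. Resolving this requires noting that the previous Proposition's cocycle $\tilde c$ takes values in $(\bI^s)^{\fS^r}$-ish subspaces and tracking that the trivializing $v$ inherits enough invariance — or, alternatively, averaging $v$ over the finite group $\fS_r$ (using that we work over a field where this finite group action is semisimple after descent, via Proposition~\ref{prop:galois}) to arrange $\fS_r$-invariance while preserving $\tilde c(\sigma) = \sigma v - v$. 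The rest — verifying the cocycle identity, verifying $\bK$-linearity of $c(\sigma)$, and checking the final map splits the sequence — is routine.

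An alternative, perhaps cleaner, route: since $\bI^s$ is a direct summand of arbitrary standard objects and (by Corollary~\ref{cor:stdquo}) every object is a quotient of a standard object, one could try to build an injective resolution and compute directly; but this seems to require more than is available at this point in the paper, so I would stick with the cocycle argument above, which uses only Propositions~\ref{prop:galois}, \ref{prop:adjunction}, \ref{prop:stdmap} and the cohomology vanishing just established.
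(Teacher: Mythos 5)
The overall strategy here is right and is essentially the paper's: lift the generator $\epsilon_r$ to $E$, measure the failure of equivariance by a cocycle, kill the cocycle using the cohomology vanishing proposition, and finish with Galois descent and Proposition~\ref{prop:stdmap}. But the logic has a real gap, and it is exactly at the step you flag as the ``main technical obstacle.''

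You assert that $\ell$ can be chosen so that $\ell(\epsilon_r)$ is already fixed by $\fS_r \times \fS^r$, citing Proposition~\ref{prop:galois}. This is not available. Proposition~\ref{prop:galois} applies to a \emph{finite} group acting faithfully on a field; it tells you that $E^{\fS^r}$, as a $\bK_r$-semilinear representation of the finite group $\fS_r$, is spanned by its $\fS_r$-invariants. It says nothing about whether $E^{\fS^r} \to (\bI^r)^{\fS^r}$ is surjective, i.e.\ whether a lift of $\epsilon_r$ can be made $\fS^r$-invariant in the first place: $\fS^r$ is infinite, and the functor $(-)^{\fS^r}$ is not right exact for free. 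Producing an $\fS^r$-invariant lift is precisely the content of the cohomology vanishing proposition, so you cannot take it as an input. Worse, if $\ell(\epsilon_r)$ were already $\fS_r\times\fS^r$-fixed, then Proposition~\ref{prop:stdmap} would immediately hand you an $\cA$-morphism $\bI^r \to E$ splitting $\pi$, and the entire cocycle argument you then run would be vacuous; the circularity is a symptom of the misplaced step.

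The correct ordering is the paper's: start with \emph{any} smooth lift $f$ of $\epsilon_r$; it is $\fS^m$-invariant for some $m\ge r$, so $c(\sigma)=\sigma f - f$ is a cocycle on $\fS^r$ (identified with $\fS$) that vanishes on $\fS^m$, hence a coboundary by the cohomology proposition, giving an $\fS^r$-invariant lift; \emph{only then} apply Proposition~\ref{prop:galois} to the finite group $\fS_r$ acting on $E^{\fS^r}$ to also arrange $\fS_r$-invariance; and finally invoke Proposition~\ref{prop:stdmap}. A secondary issue: your $\tilde c(\sigma) = \sigma(\ell(\sigma^{-1}\epsilon_r)) - \ell(\epsilon_r)$ is a cocycle valued in $\bI^s$ only when restricted to $\fS^r$ (where $\sigma^{-1}\epsilon_r=\epsilon_r$); on all of $\fS$ it is not, and the appeal to Proposition~\ref{prop:stdmap} to pass from the map $c(\sigma)$ to the element $c(\sigma)(\epsilon_r)$ is also unjustified, since $c(\sigma)$ is only $\bK$-linear, not an $\cA$-morphism. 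Both issues disappear once you work with the element $f=\ell(\epsilon_r)$ directly and restrict attention to $\sigma\in\fS^r$, as the paper does.
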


\begin{proof}
Consider an extension
\begin{displaymath}
0 \to \bI^s \to E \stackrel{\pi}{\to} \bI^r \to 0.
\end{displaymath}
Let $e=e_{1,\ldots,r}$ be the generator of $\bI^r$, and let $f \in E$ be a lift of it. Since $e$ is invariant under $\fS^r$, we have $\sigma f = f + c(\sigma)$ for some $c(\sigma) \in \bI^s$, for $\sigma \in \fS^r$. Clearly, $c$ is a 1-cocycle. It is therefore trivial by the previous theorem. We can therefore choose $f$ so that it is $\fS^r$-invariant.

Now, $E^{\fS^r}$ is a semi-linear representation of the symmetric group $\fS_r$ on $r$ letters over the field $\bK_r$. By Galois descent, it is spanned over $\bK_r$ by its $\fS_r$-invariants. We can therefore write $f=\sum_{i=1}^n a_i f_i$ with $a_i \in \bK_r$ and $f_i \in E^{\fS_r \times \fS^r}$. Since $\pi(f)$ is non-zero, we must have $\pi(f_i) \ne 0$ for some $i$. Since $\pi(f_i)$ belongs to $(\bI^r)^{\fS_r \times \fS^r}$, it is a $\bF_r$-scalar multiple of $e$. Thus, replacing $f$ with some scalar multiple of $f_i$, we may as well assume that our lift $f$ of $e$ is $\fS_r$-invariant.

The mapping property of $\bI^r$ (Proposition~\ref{prop:stdmap}) now provides us with a map $\bI^r \to E$ taking $e$ to $f$. This map splits $\pi$.
\end{proof}

\begin{corollary} \label{cor:sstd}
Semi-standard objects are standard.
\end{corollary}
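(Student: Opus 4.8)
The plan is to reduce the statement to the vanishing of $\Ext^1$ between standard objects, and then to obtain that vanishing from the previous proposition together with one extra observation about smoothness. First I would set up the easy reduction: given a semi-standard $M$ with a filtration $0 = M_0 \subseteq \cdots \subseteq M_k = M$ having standard quotients, I would induct on $k$. The case $k \le 1$ is immediate, and for the inductive step $M_{k-1}$ is standard by induction and $M/M_{k-1}$ is standard by hypothesis, so it suffices to split the extension $0 \to M_{k-1} \to M \to M/M_{k-1} \to 0$; once we know $\Ext^1(P,Q) = 0$ for all standard $P,Q$, this splits, so $M$ is a direct sum of standard objects and hence standard.

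To prove $\Ext^1(P,Q) = 0$ for standard $P = \bigoplus_i \bI^{r_i}$ and $Q = \bigoplus_{j \in J} \bI^{s_j}$, I would first note that an extension of $P$ by $Q$ splits as soon as each of its pullbacks along $\bI^{r_i} \hookrightarrow P$ does (the resulting sections glue because $P$ is a coproduct), reducing to $P = \bI^r$. So consider $0 \to Q \to E \stackrel{\pi}{\to} \bI^r \to 0$ and a lift $f \in E$ of the generator $\epsilon_r$; since $\epsilon_r$ is $\fS^r$-invariant, $c(\sigma) := \sigma f - f$ is a $1$-cocycle $\fS^r \to Q$, and by smoothness of $E$ it vanishes on some $\fS^m$ with $m \ge r$. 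The key new point is that $c$ in fact takes values in a \emph{finite} sub-sum $Q' = \bigoplus_{j \in J'} \bI^{s_j}$ of $Q$: the group $\fS^r$ is generated by the transpositions $(a\;a{+}1)$ with $a \ge r+1$, those with $a \ge m+1$ lie in $\fS^m$ and are killed by $c$, and the finitely many remaining generator values $c((a\;a{+}1))$ with $r+1 \le a \le m$ lie in finitely many of the summands by smoothness of $Q$; since $Q'$ is an $\fS$-subrepresentation and $c$ is determined by its values on generators via the cocycle identity, $c$ lands in $Q'$.

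With this in hand, the rest is essentially the argument of the previous proposition. Pulling back along the isomorphism $\fS \stackrel{\sim}{\to} \fS^r$ defining the $r$-fold shift, $c$ becomes a $1$-cocycle on $\fS$ valued in $\Sigma^r(Q')$, which is a finitely generated standard object of $\cA_{\bk(u_1,\ldots,u_r)}$ and vanishes on $\fS^{m-r}$; decomposing $\Sigma^r(Q')$ into its finitely many indecomposable summands and applying the cohomology vanishing proposition of \S\ref{ss:coh} to each, I conclude that $c$ is a coboundary. Replacing $f$ by $f - a$ for the appropriate $a \in Q'$, I may assume $f$ is $\fS^r$-invariant; then, exactly as before, $E^{\fS^r}$ is spanned over $\bK_r$ by its $\fS_r$-invariants (Proposition~\ref{prop:galois}), so some $\fS_r \times \fS^r$-invariant element of $E$ maps under $\pi$ to a nonzero $\bF_r$-multiple of $\epsilon_r$, and rescaling it gives an $\fS_r \times \fS^r$-invariant lift of $\epsilon_r$, whence a section of $\pi$ by Proposition~\ref{prop:stdmap}. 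The only real obstacle is the finite-sub-sum step: $\Ext^1$ does not commute with infinite coproducts in general, so one cannot simply invoke $\Ext^1(\bI^r,\bI^{s_j}) = 0$ termwise and must instead use smoothness to see that the obstruction cocycle is supported on finitely many summands.
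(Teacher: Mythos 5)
Your proof is correct and is essentially what the paper leaves implicit: the corollary follows from the $\Ext^1$-vanishing proposition by induction on the filtration length, once one knows $\Ext^1(P,Q)=0$ for arbitrary (possibly infinite) standard sums $P$ and $Q$, not just the indecomposables. The reduction to $P=\bI^r$ is immediate since $\Ext^1$ out of a coproduct is a product, and you correctly identify the only genuine subtlety: the infinite direct sum in the codomain. Your resolution---using the generators $(a\;a{+}1)$, $a\ge r+1$, of $\fS^r$ together with the cocycle identity and $\fS$-stability of each summand to show the obstruction cocycle lands in a finite sub-sum $Q'$, and then re-running the shift-and-vanish argument for the finitely many indecomposable summands of $\Sigma^r(Q')$---is sound, and it is the same shift reduction the paper uses implicitly in the proof of the $\Ext^1$ proposition itself. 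A minor wording point: each $c((a\;a{+}1))$ having finite support is simply the definition of a direct sum rather than a consequence of smoothness; smoothness is what furnishes the $m$ with $c\vert_{\fS^m}=0$. An alternative to your finite-support step: since $\cA$ is locally noetherian (Proposition~\ref{prop:noeth}), direct sums of injectives are injective, and $\Hom_{\cA}(\bI^r,-)$ commutes with direct sums for the finitely generated object $\bI^r$; embedding each $\bI^{s_j}$ in its injective hull then gives $\Ext^1(\bI^r,\bigoplus_j\bI^{s_j})\cong\bigoplus_j\Ext^1(\bI^r,\bI^{s_j})=0$ directly from the proposition, without redoing the cohomology argument. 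Either route closes the same real gap, and your write-up handles it correctly.
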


\subsection{The embedding theorem}

\begin{lemma} \label{lem:noeth}
Let $S$ be a localization of $\bk[u,\xi_1,\xi_2,\ldots]$ at some $\fS$-stable multiplicative set. Then the category of smooth $\fS$-equivariant $S$-modules is locally noetherian.
\end{lemma}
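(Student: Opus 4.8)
The plan is to reduce the statement to a noetherianity result we already have, namely Lemma~\ref{lem:cohen}, which says that $\bR \otimes_{\bk} \bV^r$ is an equivariantly noetherian $\bR$-module over $\bR = \bk[\xi_1, \xi_2, \ldots]$ (and more generally over any noetherian base ring). The first observation is that we may handle the localization cheaply: if $\bT = \bk[u, \xi_1, \xi_2, \ldots]$ and $S = \Sigma^{-1}\bT$ for an $\fS$-stable multiplicative set $\Sigma$, then localization $M \mapsto \Sigma^{-1}M$ is an exact functor from smooth $\fS$-equivariant $\bT$-modules to smooth $\fS$-equivariant $S$-modules, it is essentially surjective (every $S$-module is the localization of a $\bT$-submodule, e.g.\ itself viewed as a $\bT$-module), and the submodules of $\Sigma^{-1}M$ are in order-preserving correspondence with the $\Sigma$-saturated submodules of $M$. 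Hence it suffices to prove that the category of smooth $\fS$-equivariant $\bT$-modules is locally noetherian, i.e.\ that each finitely generated such module is noetherian.

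Next I would reduce to understanding the generators. A smooth $\fS$-equivariant $\bT$-module $M$ that is finitely generated is a quotient of a finite direct sum of modules of the form $\bT \otimes_{\bk[u]} W$, where $W$ ranges over the $\bT$-module generated by an $\fS^m$-fixed element — more precisely, since each generator $x$ is fixed by some $\fS^m$, the $\bk[u,\xi_1,\ldots,\xi_m]$-module it generates, after inducing up, is a quotient of $\bT \otimes_{\bk[u,\xi_1,\ldots,\xi_m]} \bk[\fS]\cdot$(something), and one packages this as a quotient of $\bT \otimes_{\bk} (\text{a smooth }\fS\text{-rep induced from }\fS_m)$. So it is enough to show that $\bT \otimes_{\bk} V$ is equivariantly noetherian for $V$ a finitely generated smooth $\fS$-representation, and by taking a finite filtration it suffices to treat $V = \bk[\fS/(\fS_p \times \fS^q)]$-type pieces — concretely, $V$ a permutation representation on tuples, which is exactly the kind of object to which Lemma~\ref{lem:cohen} applies (with $\bV^r$ replaced by the appropriate basis of tuples, these being finitely generated as $\fS$-representations).

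The last step is to absorb the extra variable $u$. Write $\bT = \bR[u]$ with $\bR = \bk[\xi_1, \xi_2, \ldots]$. An $\fS$-equivariant $\bT$-module is the same thing as an $\fS$-equivariant $\bR$-module together with a commuting $\bR$-linear (and $\fS$-invariant) endomorphism coming from multiplication by $u$; equivalently it is an $\fS$-equivariant module over the polynomial ring in one variable over $\bR$. A finitely generated $\bR[u]$-module is a finitely generated $\bR[u]$-module in the ordinary sense after forgetting $\fS$, but that alone does not give equivariant noetherianity, so instead I would invoke Lemma~\ref{lem:cohen} with the base ring $\bk$ replaced by the noetherian ring $\bk[u]$: that lemma explicitly allows the base field to be replaced by any noetherian ring. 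Thus $\bk[u][\xi_1,\xi_2,\ldots] \otimes_{\bk[u]} V = \bT \otimes_{\bk} V_0$ (where $V = \bk[u]\otimes_\bk V_0$) is equivariantly noetherian, which is what we needed. Chaining the three reductions — localization, reduction to $\bT \otimes_\bk V$, and application of Lemma~\ref{lem:cohen} over the base $\bk[u]$ — completes the proof.

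The main obstacle I expect is the bookkeeping in the middle step: verifying that a finitely generated smooth $\fS$-equivariant $\bT$-module really is a subquotient of a finite sum of the induced modules $\bT \otimes_{\bk} V$ with $V$ a finitely generated smooth $\fS$-representation, and that these $V$ admit finite filtrations by the permutation modules on tuples to which Lemma~\ref{lem:cohen} literally applies. This is the same type of argument used in Corollary~\ref{cor:stdquo} and Proposition~\ref{prop:noeth}, transported to the $\bT$-linear setting, so I would expect it to go through without serious trouble, but it is where the care is required.
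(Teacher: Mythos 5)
Your overall plan is close to the paper's and the bookends are fine: the reduction through localization is sound, and the last step---invoking Lemma~\ref{lem:cohen} with the noetherian base ring $\bk[u]$ in place of $\bk$---is exactly what the paper does. But the middle reduction, which you yourself flag as "where the care is required," has a real gap as written. After passing to $\bT=\bk[u,\xi_1,\xi_2,\ldots]$, the generators of a finitely generated smooth equivariant $\bT$-module are only guaranteed to be $\fS^m$-fixed for some $m$, so the most you can conclude is that $M$ is a quotient of a finite sum of modules $\bT\otimes_{\bk} W$ with $W=\bk[\fS/\fS^m]$---the permutation representation on $m$-\emph{tuples}, i.e.\ $\bW^m$. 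That is \emph{not} the representation to which Lemma~\ref{lem:cohen} applies: the lemma is stated only for $\bV^r$, the permutation representation on $r$-element \emph{subsets} (equivalently $\bk[\fS/(\fS_r\times\fS^r)]$), and $\bW^m$ is not a quotient of a finite sum of $\bV^r$'s. So the sentence claiming that tuples are "exactly the kind of object to which Lemma~\ref{lem:cohen} applies" is where the proof breaks down.

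The paper avoids this by \emph{not} descending to $\bT$ first. It keeps the $\cA$-structure in play: viewing $M$ as an object of $\cA$ (which uses that $\bK\subseteq S$, as it is in the intended application from Theorem~\ref{thm:embed}), it applies Corollary~\ref{cor:stdquo}, which by Galois descent over the field $\bK_r=\bk(\xi_1,\ldots,\xi_r)$ produces generators fixed by the larger group $\fS_r\times\fS^r$; this is what makes $\bV^r$ (subsets) suffice. Over $\bT$ the coefficient ring $\bk[u,\xi_1,\ldots,\xi_r]$ is not a field and Proposition~\ref{prop:galois} does not apply, so you genuinely cannot upgrade $\fS^m$-fixed generators to $\fS_m\times\fS^m$-fixed ones. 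To repair your argument you would either need to carry along the $\cA$-machinery (as the paper does, first obtaining a surjection $S\otimes_{\bK}\bI^r\to M$ and only then contracting subobjects to $R\otimes_{\bk}\bV^r$ with $R=\bk[u,\xi_1,\ldots]$), or else prove a tuple analogue of Lemma~\ref{lem:cohen} showing that $R\otimes_{\bk}\bW^m$ is equivariantly noetherian. The latter is true and can be extracted from the Aschenbrenner--Hillar/Cohen framework using several symbol families, but it is an extension of what the paper's Lemma~\ref{lem:cohen} actually states and would need to be argued.
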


\begin{proof}
The proof is similar to the proof that $\cA$ is locally noetherian (Proposition~\ref{prop:noeth}). First suppose that $M$ is a smooth equivariant $S$-module. Regarding $M$ as an object of $\cA$, there is a standard object $I$ and a surjection $I \to M$. Thus the canonical $S$-linear map $S \otimes_{\bK} I \to M$ is also surjective. It follows that if $M$ is finitely generated then it is a quotient of a finite direct sum of objects of the form $S \otimes_{\bK} \bI^r$, and so it suffices to show that these are noetherian.

Let $V$ be the $\bk$-linear representation of $\fS$ used in Proposition~\ref{prop:noeth}. It thus suffices to show that $S \otimes_{\bk} V$ is noetherian. Let $R=\bk[u,\xi_1,\xi_2,\ldots]$. Then $R \otimes_{\bk} V$ is noetherian in the category of $\fS$-equivariant $R$-modules (Lemma~\ref{lem:cohen}). Since the map
\begin{displaymath}
\{ \text{subobjects of $S \otimes_{\bk} V$} \} \to \{ \text{subobjects of $R \otimes_{\bk} V$} \}, \qquad
M \mapsto M \cap (R \otimes_{\bk} V)
\end{displaymath}
is injective, the result follows.
\end{proof}

\begin{theorem} \label{thm:embed}
Let $M$ be a finitely generated object of $\cA$. Then there is an injection $f \colon M \to I$ where $I$ is a finitely generated standard object. Moreover, one can take $I$ to have the same generation as $M$, and, if $\bk$ is infinite, one can take $f$ such that $\coker(f)$ has strictly smaller generation degree.
\end{theorem}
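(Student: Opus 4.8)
The plan is to build the embedding inductively, peeling off one layer at a time using the cohomology vanishing already proved, while keeping track of generation degree. Let $M$ be finitely generated, say generated in degree $g$, so that $M$ is a quotient of a finite sum $P$ of copies of $\bI^r$ with $r \le g$. The starting point is to embed the socle-type data of $M$ into standards. Concretely, I would first observe that $M$ embeds into $\prod_r \bI^r \otimes_{\bk} \Hom_{\cA}(M, \bI^r)^*$ or, more usefully, argue directly: since $\cA$ is locally noetherian (Proposition~\ref{prop:noeth}) and (by the $\Ext^1$ vanishing) standard objects behave like injectives on the subcategory they control, it should suffice to produce \emph{enough} maps from $M$ to standard objects to separate points. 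The cleanest route is probably to dualize the surjection statement of Corollary~\ref{cor:stdquo}: one wants a version of ``every object embeds into a standard object,'' which is exactly the content of the theorem, so I cannot simply cite it — I must construct the map.

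So here is the inductive scheme I would actually run, on generation degree $g$. If $g = 0$ then $M$ is a semilinear representation of $\fS$ on which all of $\fS^0 = \fS$-smoothness forces $M$ to be a sum of copies of $\bI^0 = \bK$ by Galois descent arguments (Proposition~\ref{prop:galois} applied at each finite level), and there is nothing to do. For the inductive step, consider $M^{\fS^g}$, a finite-dimensional semilinear $\fS_g$-representation over $\bK_g = \bk(\xi_1,\ldots,\xi_g)$; by Galois descent it has a basis of $\fS_g$-invariants, giving (via Proposition~\ref{prop:stdmap}) a map $\alpha \colon \bigoplus \bI^g \to M$ which is surjective onto the subobject $M'$ generated in top degree. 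The key point is that $M/M'$ has generation degree $< g$, so by induction it embeds into a finitely generated standard object $I''$ of generation degree $< g$. Now I would try to lift: the obstruction to lifting the embedding $M/M' \hookrightarrow I''$ to a map $M \to I''$ lives in $\Ext^1_{\cA}(M', I'')$, and the obstruction to then splitting off $M'$ lives in data controlled by $\Ext^1$ between standards, which vanishes. Assembling, $M$ embeds into (a standard cover of $M'$) $\oplus\, I''$, which is standard of generation degree $g$.

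The main obstacle is making the lifting/splitting argument honest: $\Ext^1_{\cA}(M', I'')$ need not vanish for arbitrary finitely generated $M'$ — the vanishing I have is only $\Ext^1(\bI^r, \bI^s) = 0$. So I would instead argue more carefully by working with the short exact sequence $0 \to N \to \bigoplus \bI^g \to M' \to 0$ and the snake lemma: embed $M$ into $\bigoplus \bI^g \oplus I''$ by combining $\alpha$ (suitably modified) with a lift of $M \to M/M' \hookrightarrow I''$, and check injectivity of the combined map directly — an element in the kernel maps to zero in $I''$, hence lies in $M'$, and maps to zero in... here one needs the $\bigoplus \bI^g$ component to be injective on $M'$, which it is not in general since $\alpha$ has a kernel. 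The fix is to not use $\alpha$ but rather the \emph{coevaluation} map $M' \to \Hom_{\cA}(\Hom_{\cA}(M', \bI^?), \ldots)$; more concretely, replace $\bigoplus \bI^g$ by $\bigoplus_{r \le g} \bI^r \otimes_{\bk} \Hom_{\cA}(\bI^r, -)^\vee$-type construction, or simply: since $\Ext^1(\bI^r,\bI^s)=0$, the functor $\Hom_{\cA}(-, \bigoplus \bI^s)$ is exact on the category of semi-standard objects, and one shows $M$ is a subobject of a semi-standard object first (using the embedding theorem in lower degree plus that $M'$ itself — being a quotient of standards with kernel $N$ of lower generation degree — sits inside a standard by downward induction applied to the \emph{dual} filtration). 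I expect the genuinely hard part to be this last reduction: ensuring the inductive hypothesis applies to the right object (either $N$ or $M/M'$) so that the $\Ext$-vanishing between standards is all that is needed to glue. For the ``moreover'' clause, once $I$ has generation degree $g$, write $I = \bigoplus \bI^g \oplus I'$ with $I'$ of lower generation degree; the top-degree part of $M$ surjects onto the top-degree generators, so $\coker(f)$ is a quotient of $I'/(\text{image})$ plus the cokernel in lower degree, giving $g(\coker f) < g$ — here the hypothesis that $\bk$ is infinite enters exactly where it entered in the cohomology lemma (needing infinitely many specialization values), namely to guarantee the degree-$g$ part of $f$ is ``generic enough'' to be surjective on generators after the invariant-basis choice.
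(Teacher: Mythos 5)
Your proposal takes a genuinely different route from the paper, but it has a gap that it acknowledges and does not close, and I do not see how to close it without new ideas.

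The paper does not induct on generation degree. It uses Theorem~\ref{thm:shift} together with a spreading-out/specialization argument. In outline: some shift $\Sigma^n(M)$ is standard, and since $\Omega^n(M) \hookrightarrow \Sigma^n(M)$ with cokernel of smaller generation degree (Proposition~\ref{prop:injectivity-phi}), $\Omega^n(M)$ is ``good'' (embeds into a standard object of the same generation degree with good cokernel). The heart of the proof is then a descent step: if $\Omega(M)$ is good, so is $M$. One takes the embedding $\Omega(M) \hookrightarrow \Omega(I)$ over $\bK(u)$, spreads it out over a localization $S$ of $\bK[u]$, arranges after further localization that the cokernel $C$ is $\bk[u]$-flat (this uses Lemma~\ref{lem:noeth}), and then specializes $u$ to a scalar $a \in \bk$ chosen so that $S/(u-a)S \cong \bK$. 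Flatness of $C$ preserves exactness under this specialization, giving $M \hookrightarrow I$. The hypothesis that $\bk$ is infinite is used \emph{only} to guarantee a suitable $a \in \bk$ exists --- not, as you suggest, to make the ``top-degree part of $f$ generic''; your diagnosis of where infinitude enters is wrong.

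Your inductive scheme has the following concrete problem, which you partially flag yourself. After splitting off $M' \subset M$ (the subobject generated in top degree) and embedding $M/M' \hookrightarrow I''$ by induction, to glue you need either (a) $\Ext^1_{\cA}(M', I'') = 0$, or (b) an embedding $M' \hookrightarrow J$ with $J$ standard which then extends over $M$ by injectivity of $J$. Neither is available at this point: (a) is only known when $M'$ is standard; (b) is exactly the theorem you are trying to prove applied to $M'$, which is again generated in degree $g$ so the inductive hypothesis does not apply --- and Theorem~\ref{thm:standards-are-injective} (injectivity of standards) is proved \emph{after} and \emph{using} Theorem~\ref{thm:embed}, so you cannot invoke it here. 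Your proposed workarounds (replacing $\alpha$ by a coevaluation, passing to a ``dual filtration'' of the kernel $N$ of $\bigoplus \bI^g \twoheadrightarrow M'$) do not resolve this: $N$ can also have generation degree $g$, so induction does not apply to it either. The missing ingredient is precisely the shift theorem, which never appears in your proposal and is what lets the paper bootstrap out of this circularity.
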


\begin{proof}
We first assume $\bk$ is infinite. We say that $M$ is ``good'' if the theorem holds for $M$. By Theorem~\ref{thm:shift}, we know that $\Omega^n(M)$ is good for some $n$. It thus suffices to show that $M$ is good if $\Omega(M)$ is. We now do this.

Assume $\Omega(M)$ is good. We thus have an injection $f_0 \colon \Omega(M) \to \Omega(I)$ where $I$ is a standard object of $\cA$ of the same generation degree $g$ as $M$ and $\coker(f)$ has smaller generation degree; we note that $\Omega$ does not change generation degree, and that every standard object of $\cA_{\bk(u)}$ comes from applying $\Omega$ to a standard object of $\cA$. Let $x_1, \ldots, x_n$ be generators for $M$, and write $f_0(x_i) = \sum_j a_{i,j} \otimes b_{i,j}$ where $a_{i,j} \in \bK(u)$ and $b_{i,j} \in I$. Let $S$ be the localization of $\bK[u]$ obtained by inverting the $\fS$-orbits of the denominators of the $a_{i,j}$'s. Thus $f_0$ defines a map $f \colon S \otimes_{\bK} M \to S \otimes_{\bK} I$, which we denote by $f$.

Let $C$ be the cokernel of $f$. Let $y_1, \ldots, y_m$ be elements of $C$ that are invariant under $\fS^{g-1}$ that generate $\coker(f_0) = \bK(u) \otimes_S C$. Let $z_1, \ldots, z_{\ell}$ be generators for $C$. Then we can write $z_i = \sum c_{i,j} y_j$ for some $c_{i,j} \in \bK(u)$. Let $S'$ be the localization of $S$ obtained by inverting the $\fS$-orbits of the denominators of the $c_{i,j}$. Thus $S' \otimes_S C$ is generated by the $y$'s, and thus has generation degree $<g$. In what follows, we replace $S$ with $S'$.

Now, let $C' \subset C$ be the set of all elements with non-zero annihilator in $\bk[u]$. This is an $\fS$-stable $S$-submodule of $C$. Since $C$ is noetherian (Lemma~\ref{lem:noeth}), it follows that $C'$ is finitely generated. Thus there is a single non-zero polynomial in $\bk[u]$ that annihilates $C$. We replace $S$ with its localization at this polynomial. Thus $C$ is now $\bk[u]$-flat.

By consturction, $S$ is the localization of $\bK[u]$ obtain by inverting the $\fS$-orbits of finitely many elements, say $h_1, \ldots, h_t$, of $\bk[u,\xi_i]$. Regarding $h_i$ as a polynomial in the $\xi$'s with coefficients in $\bk[u]$, let $H_i \in \bk[u]$ be the greatest common divisor of its coefficients, and let $H$ be the least common multiple of the $H_i$'s. Since $\bk$ is infinite, we can find $a \in \bk$ such that $H(a) \ne 0$. Thus, upon setting $u=a$ none of the $h_i$'s (or any element of the $\fS$-orbit of $h_i$) becomes~0. It follows that $S/(u-a)S \cong \bK$.

We now take the short exact sequence
\begin{displaymath}
0 \to S \otimes_{\bK} M \to S \otimes_{\bK} I \to C \to 0
\end{displaymath}
and apply $- \otimes_{\bk[u]} \bk[u]/(u-a)$. Since $C$ is $\bk[u]$-flat, the sequence remains exact. We thus obtain a short exact sequence
\begin{displaymath}
0 \to M \to I \to C/(u-a)C \to 0.
\end{displaymath}
Since $C/(u-a) C$ is a quotient of $C$, it still has generation degree strictly less than that of $M$. This completes the proof when $\bk$ is infinite.

Suppose now that $\bk$ is finite. By the above proof, we can find an injection $\ol{\bk} \otimes_{\bk} M \to \ol{\bk} \otimes_{\bk} I$ for some finitely generated standard object $I$. This map is defined over some finite extension $\bk'$ of $\bk$. We thus have an injection $\bk' \otimes_{\bk} M \to \bk' \otimes_{\bk} I$. But $M$ injects into $\bk' \otimes_{\bk} M$, and if we regard $\bk' \otimes_{\bk} I$ as an object of $\cA$ it is simply a finite direct sum of copies of $I$. This completes the proof.
\end{proof}

\begin{corollary} \label{cor:res}
Suppose $\bk$ is infinite. Let $M \in \cA$ be finitely generated. Then we have a resolution
\begin{displaymath}
0 \to M \to I^0 \to \cdots \to I^n \to 0
\end{displaymath}
where each $I^k$ is standard and finitely generated. Moreover, if $M$ has generation degree $g$ then one can take $n=g$ and $I^k$ to have generation degree $\le g-k$.
\end{corollary}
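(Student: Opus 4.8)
The plan is to induct on the generation degree $g = g(M)$, peeling off one finitely generated standard object at a time via the embedding theorem (Theorem~\ref{thm:embed}). Since $\bk$ is infinite, that theorem applies in its strong form: it produces an injection $f \colon M \to I^0$ with $I^0$ finitely generated standard of the same generation degree $g$, and with $C := \coker(f)$ of generation degree strictly smaller than $g$.

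Now $C$, being a quotient of the finitely generated object $I^0$, is finitely generated, and $g(C) \le g-1$. If $g=0$, this forces $C=0$, so $M \cong I^0$ is standard and $0 \to M \to I^0 \to 0$ is the desired resolution (of length $0$). If $g \ge 1$, the inductive hypothesis applied to $C$ yields a resolution $0 \to C \to J^0 \to \cdots \to J^{g(C)} \to 0$ with each $J^k$ finitely generated standard and $g(J^k) \le g(C)-k$. Appending trailing zero objects (which are standard, of generation degree $0 \le (g-1)-k$ for $k \le g-1$), we may take this resolution to have length $g-1$, say $0 \to C \to J^0 \to \cdots \to J^{g-1} \to 0$, still with $g(J^k) \le (g-1)-k$. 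Splicing with the short exact sequence $0 \to M \to I^0 \to C \to 0$ --- that is, taking the map $I^0 \to J^0$ to be the composite $I^0 \twoheadrightarrow C \hookrightarrow J^0$ --- gives a complex
\[ 0 \to M \to I^0 \to J^0 \to \cdots \to J^{g-1} \to 0, \]
which is exact: at $M$ and at $J^0,\ldots,J^{g-1}$ exactness is immediate from the two sequences being spliced, and at $I^0$ it follows from $\ker(I^0 \to J^0) = \ker(I^0 \twoheadrightarrow C) = \im(f)$ since $C \hookrightarrow J^0$ is injective. Setting $I^k := J^{k-1}$ for $1 \le k \le g$, this is a resolution of length $n=g$ with each $I^k$ finitely generated standard, $g(I^0)=g$, and $g(I^k) = g(J^{k-1}) \le (g-1)-(k-1) = g-k$ for $k \ge 1$, as required.

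Essentially everything here is bookkeeping; I do not anticipate a genuine obstacle. The one point worth flagging is that Theorem~\ref{thm:embed} only guarantees that $\coker(f)$ has \emph{strictly} smaller generation degree than $M$, not generation degree exactly $g-1$, which is why one pads the resolution of $C$ with zero objects to achieve the uniform length $n=g$. The exactness of the spliced complex and the finite generation of $C$ are routine.
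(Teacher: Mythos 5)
The paper states Corollary~\ref{cor:res} without proof, leaving it as an immediate consequence of Theorem~\ref{thm:embed}; your argument --- induct on generation degree, peel off a finitely generated standard envelope using the strong form of the embedding theorem, and splice --- is exactly the intended one, and the bookkeeping (padding the resolution of the cokernel to uniform length, checking $g(I^k)\le g-k$) is handled correctly.
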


\begin{remark}
In fact, the corollary remains true if $\bk$ is finite. See \S \ref{ss:finite} and \S \ref{ss:app}.
\end{remark}

\subsection{The injectivity theorem}

\begin{theorem}
\label{thm:standards-are-injective}
Standard objects are injective.
\end{theorem}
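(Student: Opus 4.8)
The plan is to use local noetherianity to cut the statement down to a single $\Ext^1$‑vanishing, and then to prove that vanishing by shifting until everything in sight is standard and descending the resulting splitting. First, since $\cA$ is locally noetherian (Proposition~\ref{prop:noeth}), arbitrary direct sums of injective objects are injective, so it is enough to show that each indecomposable standard $\bI^r$ is injective. Local noetherianity also provides a Baer‑type criterion: because the objects $\bI^s$ form a generating family of noetherian objects, $\bI^r$ is injective as soon as, for every $s$, each morphism $N\to\bI^r$ out of a subobject $N\subseteq\bI^s$ extends to $\bI^s$. Applying $\Hom_\cA(-,\bI^r)$ to $0\to N\to\bI^s\to\bI^s/N\to 0$ and invoking the Ext calculation $\Ext^1_\cA(\bI^s,\bI^r)=0$, this extension property is equivalent to $\Ext^1_\cA(\bI^s/N,\bI^r)=0$; so the whole theorem reduces to showing $\Ext^1_\cA(M,\bI^r)=0$ for every finitely generated $M$.

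To get this I would fix a finitely generated $M$ together with an extension $0\to\bI^r\xrightarrow{\ \iota\ } E\xrightarrow{\ \pi\ } M\to 0$, noting that $E$ is finitely generated. By the shift theorem (Theorem~\ref{thm:shift}) and Corollary~\ref{cor:sstd}, there is a $k$ for which $\Sigma^k(M)$ is standard; $\Sigma^k(\bI^r)$ is standard for every $k$, hence so is $\Sigma^k(E)$. The Ext calculation holds over an arbitrary base field, so $\Ext^1_{\cA_{\bk(u_1,\dots,u_k)}}\!\bigl(\Sigma^k(M),\Sigma^k(\bI^r)\bigr)=0$, i.e.\ the sequence $\Sigma^k$ of the original one splits. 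The task is then to descend this splitting to $\cA_\bk$.

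Unravelling the definition of $\Sigma^k$, a splitting of $\Sigma^k(\pi)$ is precisely a set‑theoretic section $s$ of $\pi$ that is $\bK$‑linear and $\fS^k$‑equivariant. For $\sigma\in\fS$ set $c(\sigma)={}^{\sigma}s-s$ with ${}^{\sigma}s=\sigma\circ s\circ\sigma^{-1}$; each $c(\sigma)$ is a $\bK$‑linear map $M\to\bI^r$, the assignment $\sigma\mapsto c(\sigma)$ is a smooth $1$‑cocycle of $\fS$ with coefficients in the $\fS$‑module of such maps, and $c$ vanishes on $\fS^k$. If $c$ is a coboundary, $c(\sigma)={}^{\sigma}t-t$, then $s-t$ is an honest splitting of $\pi$ in $\cA_\bk$, which is what we want. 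When $M=\bI^0$ the coefficient module is literally $\bI^r$ and the desired vanishing is exactly the cohomology calculation of \S\ref{ss:coh}; in general one resolves $M$ by finitely generated standard objects (Corollary~\ref{cor:res}, for $\bk$ infinite), which exhibits the coefficient module in terms of the $\fS$‑modules of $\bK$‑linear smoothly equivariant maps $\bI^t\to\bI^r$ — each a direct sum of shifts of standard objects — and reduces the needed vanishing to the same cohomological input. Finally, the case of finite $\bk$ is handled, as in the proof of Theorem~\ref{thm:embed}, by extending scalars to $\overline{\bk}$ and descending along a finite subextension.

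I expect the descent step to be the main obstacle: the only formal inputs so far are the first‑order vanishing statements ($\Ext^1$ between standards, $H^1$ of $\fS$ on $\bI^r$), so everything hinges on checking that the coefficient module of the cocycle $c$ is cohomologically trivial in degree one, which requires genuinely using the resolution of Corollary~\ref{cor:res} (and hence dealing with the finite‑$\bk$ case afterward by base change). An alternative to Steps 2–3 would be to prove $\Ext^1_\cA(M,\bI^r)=0$ directly by induction on the generation degree of $M$, using Theorem~\ref{thm:embed} to write $0\to M\to I^0\to M'\to 0$ with $I^0$ standard and $g(M')<g(M)$; but there the long exact sequence only identifies $\Ext^1_\cA(M,\bI^r)$ with a subspace of $\Ext^2_\cA(M',\bI^r)$, so one is forced to control higher Ext between standards, and the same cohomological heart reappears.
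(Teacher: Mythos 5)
Your reduction to $\Ext^1_{\cA}(M,\bI^r)=0$ for $M$ finitely generated is correct, and the observation that $\Sigma^k$ turns both $M$ and $\bI^r$ into standard objects (so that the shifted extension splits by the Ext calculation) is also correct. The gap, which you partly anticipate, is the descent step, and I believe it is a genuine one rather than a technicality: the cocycle $c(\sigma)={}^{\sigma}s-s$ takes values in the smooth $\fS$-module $\Hom_{\bK}(M,\bI^r)^{\sm}$, and the cohomology calculation in \S\ref{ss:coh} only handles coefficients in $\bI^r$ itself. These coefficient modules are not comparable. Even after choosing a finite presentation of $M$ by standards, you are left with modules of the form $\Hom_{\bK}(\bI^t,\bI^r)^{\sm}$, and these are \emph{not} finitely generated objects of $\cA$: already $\Hom_{\bK}(\bI^1,\bI^0)^{\fS^n}$ contains a copy of $\bk(\xi_1,\dots,\xi_{n+1})$, which is infinite-dimensional over $\bK_n$, whereas $(\bI^s)^{\fS^n}$ is always finite-dimensional over $\bK_n$. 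So you cannot feed these coefficient modules into the vanishing proposition, nor do the paper's tools (all of which are stated for finitely generated or standard objects) say anything about them. Establishing the needed $H^1$-vanishing for these internal Hom modules seems to require new input, and the most natural route to it — that they are injective, hence standard — would circularly depend on the very theorem you are trying to prove.

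The paper sidesteps all of this. After reducing (also via Baer) to splitting an injection $\bI^r\hookrightarrow M$ with $M$ finitely generated, it uses Theorem~\ref{thm:embed} to replace $M$ by a finitely generated standard $J=\bigoplus_i\bI^{a(i)}$ and argues \emph{linear-algebraically}: $\Hom(\bI^r,\bI^s)=0$ for $s>r$ forces $a(i)\le r$, and if all $a(i)<r$ then comparing $\bK_m$-dimensions of $\fS^m$-invariants gives $\binom{m}{r}\le\sum_i\binom{m}{a(i)}$, which fails for $m\gg 0$. Hence some $a(i)=r$, and since $\End(\bI^r)$ is a field, the composite $\bI^r\to J\to\bI^{a(i)}=\bI^r$ is invertible and supplies the splitting. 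This avoids cocycles and shifting entirely, and in particular avoids the problematic coefficient module. If you want to salvage your cohomological route, the thing that actually needs proving is that $H^1(\fS,-)$ (in the sense of \S\ref{ss:coh}, i.e.\ for cocycles trivial on some $\fS^k$) vanishes on $\Hom_{\bK}(\bI^t,\bI^r)^{\sm}$; as things stand, that is an unproved and nontrivial assertion, not a corollary of the material in the paper.
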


\begin{proof}
Let us show that $\bI^r$ is injective. Consider an injection $\bI^r \to M$, and let us show that it splits. It suffices to treat the case where $M$ is finitely generated, by a version of Baer's theorem (see, e.g., \cite[Proposition~A.4]{increp}). Choose an injection $M \to J$ with $J$ standard and finitely generated, which is possible by Theorem~\ref{thm:embed}. It suffices to split the injection $\bI^r \to J$, since restricting the splitting to $M$ will split the injection $\bI^r \to M$.

Decompose $J$ as $\bigoplus_{i=1}^n \bI^{a(i)}$. We can assume that our map $\bI^r \to J$ is non-zero on each factor, as we can simply discard the factors on which it is zero. There are no non-zero maps $\bI^r \to \bI^s$ with $s>r$, so we have $a(i) \le r$ for all $i$. Suppose $a(i)<r$ for all $i$. For any $m \in \bN$, the given injection induces an injection $(\bI^r)^{\fS^m} \to J^{\fS^m}$ of $\bK_m$-vector spaces. The domain has dimension $\binom{m}{r}$, while the target has dimension $\sum_{i=1}^n \binom{m}{a(i)}$. We thus have $\binom{m}{r} \le \sum_{i=1}^n \binom{m}{a(i)}$, which contradicts $a(i)<r$. We conclue that $a(i)=r$ for some $i$. Since $\End(\bI^r)$ consists of scalar multiples of the identity, the composition $\bI^r \to J \to \bI^{a(i)}=\bI^r$ is a non-zero multiple of the identity. The inverse map splits our injection.
\end{proof}

\begin{corollary}
\label{cor:indecomposable}
Every injective object of $\cA$ is a standard object. Moreover, every indecomposable injective is of the form $\bI^r$ for some $r$.
\end{corollary}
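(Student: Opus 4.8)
The plan is to deduce this from Theorems~\ref{thm:standards-are-injective} and~\ref{thm:embed} together with two standard pieces of category theory: that in a locally noetherian Grothendieck category every injective object is a direct sum of indecomposable injectives, and the Krull--Schmidt theorem for finite direct sums of objects with local endomorphism rings. Since $\cA$ is locally noetherian (Proposition~\ref{prop:noeth}), the first fact reduces both assertions of the corollary to the single claim that the indecomposable injective objects of $\cA$ are exactly the $\bI^r$, $r \ge 0$: once this is known, an arbitrary injective $E$ decomposes as a direct sum of indecomposable injectives, hence as a direct sum of $\bI^r$'s, i.e. $E$ is standard.

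First I would check that each $\bI^r$ is indecomposable injective. Injectivity is Theorem~\ref{thm:standards-are-injective}. For indecomposability, Proposition~\ref{prop:stdmap} identifies $\End_{\cA}(\bI^r)$ with $(\bI^r)^{\fS_r \times \fS^r} \cong \bk(\xi_1,\dots,\xi_r)^{\fS_r}$, which is a field and in particular has no nontrivial idempotents; the $\bI^r$ are moreover pairwise non-isomorphic, e.g. because $\bI^r$ has generation degree $r$. Conversely, let $E$ be an indecomposable injective object and, using local noetherianity, choose a nonzero finitely generated subobject $M \subseteq E$. Let $M \hookrightarrow E(M)$ be an injective hull. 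Using injectivity of $E$, extend the inclusion $M \hookrightarrow E$ to a morphism $E(M) \to E$; its kernel meets $M$ in $0$ and $M$ is essential in $E(M)$, so the morphism is injective, its image is an injective subobject of $E$ and hence a direct summand, and indecomposability of $E$ forces $E(M) \xrightarrow{\sim} E$. By Theorem~\ref{thm:embed}, $M$ also embeds into a finitely generated standard object $J = \bigoplus_{i=1}^{n} \bI^{a(i)}$; the same reasoning embeds $E(M)$, and hence $E$, as a direct summand of $J$. Since the endomorphism ring of each $\bI^{a(i)}$ is a field, hence local, the Krull--Schmidt theorem applies to the finite direct sum $J$, so every indecomposable direct summand of $J$ is isomorphic to some $\bI^{a(i)}$; thus $E \cong \bI^{a(i)}$ for some $i$.

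I do not expect a serious obstacle here: the corollary is essentially formal once Theorems~\ref{thm:standards-are-injective} and~\ref{thm:embed} are in hand. The only external input is the structure theorem for injectives in locally noetherian Grothendieck categories (see, e.g., \cite{increp}), and the one place requiring a little care is the categorical bookkeeping in the last paragraph — verifying that the extension of the essential inclusion $M \hookrightarrow E(M)$ along $M \hookrightarrow J$ is injective, that an injective subobject splits off, and that the finite Krull--Schmidt theorem therefore pins down $E$ up to isomorphism.
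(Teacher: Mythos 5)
Your proof is correct and is essentially the paper's argument, spelled out in more detail: the paper also decomposes injectives via local noetherianity, notes $\End_{\cA}(\bI^r)$ is a field to get indecomposability, and concludes by embedding a finitely generated subobject into a finite standard object; your careful handling of the injective hull and the explicit appeal to Krull--Schmidt just make precise what the paper's final one-line inference leaves implicit.
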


\begin{proof}
Since $\cA$ is locally noetherian, every injective is a direct sum of indecomposable injective objects. It is clear that $\bI^r$ is indecomposable, since $\End_{\cA}(\bI^r)$ is a field (see \S \ref{ss:groth}). Since every finitely generated object injects into a finite direct sum of $\bI^r$'s, it follows that these are the only indecomposable injectives.
\end{proof}

%

\begin{corollary}
Every standard object is injective in the category of smooth $\bk$-linear $\fS$-representation.
\end{corollary}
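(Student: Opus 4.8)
The plan is to deduce this from two facts already in hand: standard objects are injective in $\cA$ (Theorem~\ref{thm:standards-are-injective}), and $\bK \otimes_{\bk} -$ is a left adjoint, as in Proposition~\ref{prop:adjunction}, of the forgetful functor from $\cA$ to the category of smooth $\bk$-linear $\fS$-representations. Since this left adjoint is exact (base change along the field extension $\bk \to \bK$), the forgetful functor preserves injectives; applying this to a standard object gives the statement. Indeed this is precisely the content of the last sentence of Proposition~\ref{prop:adjunction}, so one could simply cite it — but it is worth spelling out the transfer of injectivity.

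In detail: let $I$ be a standard object. By Theorem~\ref{thm:standards-are-injective} it is injective in $\cA$, so $\Hom_{\cA}(-, I)$ is exact on $\cA$. Given a short exact sequence $0 \to V' \to V \to V'' \to 0$ of smooth $\bk$-linear $\fS$-representations, apply $\bK \otimes_{\bk} -$: flatness of $\bK$ over $\bk$ keeps the sequence exact, and each term is again a smooth semilinear representation (an element $\sum_j a_j \otimes v_j$ is fixed by $\fS^m$ as soon as $m$ is large enough to fix all the $a_j$ and all the $v_j$), so we obtain a short exact sequence in $\cA$. Applying $\Hom_{\cA}(-, I)$ and invoking the naturality of the isomorphism $\phi$ of Proposition~\ref{prop:adjunction}, we conclude that $0 \to \Hom_{\fS}(V'', I) \to \Hom_{\fS}(V, I) \to \Hom_{\fS}(V', I) \to 0$ is exact. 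Hence $I$ is injective as a smooth $\bk$-linear $\fS$-representation.

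There is essentially no obstacle here; the work was front-loaded into Proposition~\ref{prop:adjunction} and Theorem~\ref{thm:standards-are-injective}. The only points needing (routine) checking are that $\bK \otimes_{\bk} -$ carries smooth representations to smooth semilinear representations and is exact — both used implicitly throughout — and the naturality of $\phi$, which is immediate from the formula $\phi(g)(x \otimes v) = x g(v)$. Note in particular that we do not need the category of smooth $\bk$-linear $\fS$-representations to be locally noetherian: the passage from the indecomposable standards $\bI^r$ to arbitrary (possibly infinite) direct sums has already been carried out inside $\cA$, using that $\cA$ is locally noetherian (Proposition~\ref{prop:noeth}), before transporting along the adjunction.
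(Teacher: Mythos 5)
Your proof is correct and follows exactly the paper's route: the paper's one-line proof cites Theorem~\ref{thm:standards-are-injective} together with Proposition~\ref{prop:adjunction}, whose last sentence records precisely that the forgetful functor $\cA \to \{\text{smooth }\fS\text{-reps}\}$ preserves injectives because its left adjoint $\bK \otimes_{\bk} -$ is exact. You have simply unpacked that transfer-of-injectivity step (exactness from flatness of $\bK/\bk$, preservation of smoothness, naturality of $\phi$), which the paper leaves implicit.
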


\begin{proof}
This is implied by the theorem and Proposition~\ref{prop:adjunction}.
\end{proof}

\begin{remark} \label{rmk:inj}
Let $\cC$ be the category of smooth $\bk$-linear $\fS$-representations. If $\bk$ has characteristic~0 then the $\bV^1=\bk^{\infty}$ is injective in $\cC$, and every indecomposable injective is a direct summand of a tensor power of $\bV^1$; this follows from \cite[\S 6.2.11]{infrank}, or from \cite[Proposition~2.2.10]{symc1} and the (easy) equivalence $\Mod_K=\cC$. In this case, the above corollary is not so interesting.

On the other hand, when $\bk$ has positive characteristic, the above corollary is the only explicit construction of injectives in $\cC$ that either of us know. We note that the injecitivity of $\bI^0$ implies that the $\FI$-module $[n] \mapsto \bk(\xi_1,\ldots,\xi_n)$ is injective; one can write down similar injective $\FI$-modules using $\bI^r$.
\end{remark}

\subsection{Standard resolutions over finite fields} \label{ss:finite}

The existence of standard resolutions (Corollary~\ref{cor:res}) is very important, but so far we have only established it for $\bk$ infinite. We now treat the case where $\bk$ is finite.

\begin{proposition}
Let $\bk'/\bk$ be an algebraic field extension and let $M$ and $N$ be objects of $\cA_{\bk}$, with $M$ finitely generated. We then have a natural isomorphism
\begin{displaymath}
\bk' \otimes_{\bk} \Ext^i_{\cA_{\bk}}(M, N) \to \Ext^i_{\cA_{\bk'}}(M \otimes_{\bk} \bk', N \otimes_{\bk} \bk')
\end{displaymath}
for all $i \ge 0$.
\end{proposition}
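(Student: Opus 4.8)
Write $F=\bk'\otimes_{\bk}(-)$. The first task is to pin down that $F$ is a well-behaved functor $\cA_{\bk}\to\cA_{\bk'}$. Since $\bk'/\bk$ is algebraic, $\bk'\otimes_{\bk}\bk(\xi_1,\ldots,\xi_n)$ is an integral domain that is integral over the field $\bk(\xi_1,\ldots,\xi_n)$, hence is itself a field, and as it contains $\bk'$ and the $\xi_i$ it equals $\bk'(\xi_1,\ldots,\xi_n)$; passing to the colimit over $n$ gives $\bk'\otimes_{\bk}\bK=\bK(\bk')$. Thus $F$ genuinely lands in $\cA_{\bk'}$; it is exact because $\bk'$ is free over $\bk$; it is the left adjoint of restriction of scalars, hence commutes with colimits; and it sends $\bI^r_{\bk}$ to $\bI^r_{\bk'}$, so it carries standard objects to standard objects. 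Because $\cA$ is locally noetherian (Proposition~\ref{prop:noeth}) it has enough injectives, a finitely generated $M$ is finitely presented, and by Theorem~\ref{thm:standards-are-injective} and Corollary~\ref{cor:indecomposable} the injective objects are exactly the standard ones; hence $F$ carries an injective resolution $N\to I^\bullet$ in $\cA_{\bk}$ to an injective resolution $FN\to FI^\bullet$ in $\cA_{\bk'}$.

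Granting that the natural base-change map $\theta_N\colon \bk'\otimes_{\bk}\Hom_{\cA_{\bk}}(M,N)\to\Hom_{\cA_{\bk'}}(FM,FN)$ is an isomorphism for every $N$, the proposition follows formally: one has $\Ext^i_{\cA_{\bk}}(M,N)=H^i(\Hom_{\cA_{\bk}}(M,I^\bullet))$, applying $\bk'\otimes_{\bk}(-)$ commutes with cohomology by flatness, and $\theta_{I^\bullet}$ is an isomorphism of complexes identifying the result with $H^i(\Hom_{\cA_{\bk'}}(FM,FI^\bullet))=\Ext^i_{\cA_{\bk'}}(FM,FN)$. So everything reduces to showing $\theta_N$ is an isomorphism.

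For that I would run a double dévissage. Both $N\mapsto \bk'\otimes_{\bk}\Hom_{\cA_{\bk}}(M,N)$ and $N\mapsto\Hom_{\cA_{\bk'}}(FM,FN)$ are left-exact additive functors, so applying them to an injective copresentation $0\to N\to I^0\to I^1$ by standard objects and using the five lemma reduces to $N$ standard; since $M$ is finitely presented (hence so is $FM$), both functors commute with coproducts, reducing to $N=\bI^r_{\bk}$. Then, choosing a presentation $P^1\to P^0\to M\to 0$ by finite standard objects (Corollary~\ref{cor:stdquo} together with noetherianity), left-exactness of $\Hom_{\cA_{\bk}}(-,\bI^r_{\bk})$ and of $\Hom_{\cA_{\bk'}}(-,\bI^r_{\bk'})$, exactness of $F$, and the five lemma reduce to $M$ a finite standard object, and then by additivity to $M=\bI^a_{\bk}$. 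For $M=\bI^a_{\bk}$ and $N=\bI^r_{\bk}$, Proposition~\ref{prop:stdmap} identifies $\Hom_{\cA_{\bk}}(\bI^a_{\bk},\bI^r_{\bk})$ with $(\bI^r_{\bk})^{\fS_a\times\fS^a}$ and $\Hom_{\cA_{\bk'}}(\bI^a_{\bk'},\bI^r_{\bk'})$ with $(\bI^r_{\bk'})^{\fS_a\times\fS^a}$, under which $\theta$ becomes the tautological map $\bk'\otimes_{\bk}(\bI^r_{\bk})^{\fS_a\times\fS^a}\to(\bI^r_{\bk'})^{\fS_a\times\fS^a}$.

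The one genuinely non-formal point — and where the hypothesis that $\bk'/\bk$ is algebraic really enters — is that forming $\fS_a\times\fS^a$-invariants of $\bI^r$ commutes with $\bk'\otimes_{\bk}(-)$. I would check this by hand. Smoothness forces $(\bI^r_{\bk})^{\fS^a}=\bigoplus_S \bK^{\fS^a}e_S$, the sum over the finitely many $r$-element subsets $S\subseteq\{1,\ldots,a\}$ (any other $e_S$ has infinite $\fS^a$-orbit, so cannot occur with nonzero coefficient in a finitely supported invariant element), and $\bK^{\fS^a}=\bk(\xi_1,\ldots,\xi_a)$; combined with $\bk'\otimes_{\bk}\bk(\xi_1,\ldots,\xi_a)=\bk'(\xi_1,\ldots,\xi_a)$ from the first paragraph this gives $\bk'\otimes_{\bk}(\bI^r_{\bk})^{\fS^a}=(\bI^r_{\bk'})^{\fS^a}$. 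Since $\fS_a$ is finite, taking $\fS_a$-invariants is a finite limit and commutes with the exact functor $\bk'\otimes_{\bk}(-)$, so $\bk'\otimes_{\bk}(\bI^r_{\bk})^{\fS_a\times\fS^a}=(\bI^r_{\bk'})^{\fS_a\times\fS^a}$, as required. I expect the two dévissage arguments to occupy most of the write-up, but they are routine five-lemma chases; the only real content is the invariants computation just described, so that is the step to watch.
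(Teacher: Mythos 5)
Your proof is correct, but it takes a noticeably longer road than the paper does for the key step, the case $i=0$. You and the paper agree on the overall scaffold: (a) $\bk'/\bk$ algebraic gives $\bk' \otimes_{\bk} \bK = \bK(\bk')$, so $F = \bk' \otimes_{\bk}(-)$ is an exact functor $\cA_{\bk} \to \cA_{\bk'}$; (b) injectives are exactly standards (Corollary~\ref{cor:indecomposable}), and $F$ takes standards to standards, so $F$ carries injective resolutions of $N$ to injective resolutions of $FN$; (c) once the natural map is an isomorphism on $\Hom$'s, applying it to the complex $\Hom_{\cA_{\bk}}(M, I^{\bullet})$ and passing to cohomology gives the result in all degrees. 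The divergence is in step (c)'s input: the paper settles the $i=0$ case in two lines via the extension/restriction of scalars adjunction, writing
\begin{displaymath}
\Hom_{\cA_{\bk'}}(FM, FN) \cong \Hom_{\cA_{\bk}}(M, \operatorname{res} FN)
\end{displaymath}
and observing that $\operatorname{res} FN$ is a direct sum of copies of $N$ indexed by a $\bk$-basis of $\bk'$; since $M$ is finitely generated (hence compact, as $\cA_{\bk}$ is locally noetherian), $\Hom(M,-)$ pulls out the direct sum and one lands on $\Hom_{\cA_{\bk}}(M,N) \otimes_{\bk} \bk'$. You instead run a double d\'evissage, reducing in $N$ via an injective copresentation, then in $M$ via a projective presentation by finitely generated standards, down to $M = \bI^a$ and $N = \bI^r$, and then compute the base case explicitly using Proposition~\ref{prop:stdmap}. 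That is all valid, but it makes the proof substantially longer, and the self-assessment is a bit off: the invariants computation you flag as the "genuinely non-formal point" is in fact routine (taking finite-group invariants is a finite limit and commutes with any exact functor), and the algebraicity of $\bk'/\bk$ has already done its work in your first paragraph in identifying $\bk' \otimes_{\bk} \bk(\xi_1,\ldots,\xi_a)$ with $\bk'(\xi_1,\ldots,\xi_a)$. The adjunction trick in the paper bypasses both d\'evissages and the base case entirely, and is worth internalizing.
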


\begin{proof}
First notice that because $\bk'/\bk$ is algebraic, we have $\bK(\bk') = \bk' \otimes_{\bk} \bK$. In particular, $M \otimes_{\bk} \bk'$ is a vector space over $\bK(\bk')$, and is thus an object of $\cA_{\bk'}$.

Now, the usual adjunction gives
\begin{displaymath}
\Hom_{\cA_{\bk'}}(M \otimes_{\bk} \bk', N \otimes_{\bk} \bk') = \Hom_{\cA_{\bk}}(M, N \otimes_{\bk} \bk').
\end{displaymath}
Note that $N \otimes_{\bk} \bk'$ is simply isomorphic to a direct sum of copies of $N$ as an object of $\cA_{\bk}$. Since $M$ is finitely generated, it follows that we have
\begin{displaymath}
\Hom_{\cA_{\bk}}(M, N \otimes_{\bk} \bk') = \Hom_{\cA_{\bk}}(M, N) \otimes_{\bk} \bk'.
\end{displaymath}
This establishes the $i=0$ case of the proposition.

Choose an injective resolution $N \to I^{\bullet}$. Since $I^r$ is standard (Corollary~\ref{cor:indecomposable}), it follows that $\bk' \otimes_{\bk} I^r$ is standard, and thus injective (Theorem~\ref{thm:standards-are-injective}). Hence $\bk' \otimes_{\bk} I^{\bullet}$ is an injective resolution of $\bk' \otimes_{\bk} N$. By the previous paragraph, we have an isomorphism of complexes
\begin{displaymath}
\bk' \otimes_{\bk} \Hom_{\cA_{\bk}}(M, I^{\bullet}) = \Hom_{\cA_{\bk'}}(M \otimes_{\bk} \bk', I^{\bullet} \otimes_{\bk} \bk').
\end{displaymath}
The left side computes $\bk' \otimes_{\bk} \Ext^{\bullet}_{\cA_{\bk}}(M, N)$, while the right side computes $\Ext^{\bullet}_{\cA_{\bk'}}(M \otimes_{\bk} \bk', N \otimes_{\bk} \bk')$. The result follows.
\end{proof}

\begin{proposition} \label{prop:res2}
Let $M \in \cA$ be finitely generated. Then $M$ has finite injective dimension. Moreover, there is a resolution
\begin{displaymath}
0 \to M \to I^0 \to \cdots \to I^n \to 0
\end{displaymath}
where each $I^i$ is a finitely generated standard object.
\end{proposition}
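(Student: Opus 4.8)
The plan is to bootstrap from the infinite-field case (Corollary~\ref{cor:res}) to the finite-field case using the preceding base-change proposition, and then deduce finite injective dimension as an immediate formal consequence. So suppose $\bk$ is finite, and let $M \in \cA_{\bk}$ be finitely generated. First I would pass to the algebraic closure: applying Corollary~\ref{cor:res} to $\ol{\bk} \otimes_{\bk} M$ (which is finitely generated over $\cA_{\ol{\bk}}$, since $\bK(\ol{\bk}) = \ol{\bk} \otimes_{\bk} \bK$ and generators of $M$ give generators of the base change) produces a finite standard resolution $0 \to \ol{\bk} \otimes_{\bk} M \to \ol{I}^0 \to \cdots \to \ol{I}^g \to 0$ over $\ol{\bk}$. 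The key point to extract is that this resolution, being finite and built from finitely generated objects, is already defined over some finite subextension $\bk'/\bk$: the finitely many structure maps and the finitely many generators of each $\ol{I}^k$ involve only finitely many elements of $\ol{\bk}$. Thus we get a finite standard resolution $0 \to \bk' \otimes_{\bk} M \to I'^{\,\bullet} \to 0$ in $\cA_{\bk'}$ with each $I'^{\,k}$ finitely generated and standard over $\bk'$.

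Next I would descend from $\bk'$ back to $\bk$. As noted in the proof of Theorem~\ref{thm:embed} and in the base-change proposition, when we regard an object of $\cA_{\bk'}$ as an object of $\cA_{\bk}$ (restriction of scalars along $\bk \hookrightarrow \bk'$), a finitely generated standard object of $\cA_{\bk'}$ becomes a finite direct sum of standard objects of $\cA_{\bk}$, hence is still finitely generated and standard in $\cA_{\bk}$. Since $M$ is a direct summand of $\bk' \otimes_{\bk} M$ as an object of $\cA_{\bk}$ (the trace/averaging splitting, valid because $\bk'/\bk$ is finite separable — finite fields are perfect — so $\tfrac{1}{[\bk':\bk]}\operatorname{Tr}_{\bk'/\bk}$ splits $\bk \hookrightarrow \bk'$; in any case one can argue via $\operatorname{Hom}$-functoriality exactly as at the end of the proof of Theorem~\ref{thm:embed}), the finite standard resolution of $\bk' \otimes_{\bk} M$ in $\cA_{\bk}$ restricts to one of $M$: one simply applies the exact "direct summand" projector to the whole complex, or equivalently observes that $M$ has a standard resolution because a direct summand of an object with such a resolution does (standard objects being closed under direct summands by Corollary~\ref{cor:indecomposable}). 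This yields the desired $0 \to M \to I^0 \to \cdots \to I^n \to 0$ with each $I^i$ finitely generated and standard over $\bk$.

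Finally, finite injective dimension is automatic: by Theorem~\ref{thm:standards-are-injective} the objects $I^i$ are injective, so the resolution just constructed is a finite injective resolution of $M$, whence $\operatorname{injdim} M \le n$. (Alternatively one can cite the base-change proposition directly: $\Ext^i_{\cA_{\bk}}(M,N) \otimes_{\bk} \bk' \cong \Ext^i_{\cA_{\bk'}}(M\otimes\bk', N\otimes\bk')$, and the right side vanishes for $i$ large and all $N$ by Corollary~\ref{cor:res} over the infinite field $\bk'$, hence so does the left side, hence so does $\Ext^i_{\cA_{\bk}}(M,N)$.)

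The main obstacle is the descent step: one must be careful that "standard" and "finitely generated" are genuinely preserved under the restriction-of-scalars functor $\cA_{\bk'} \to \cA_{\bk}$, and that $M$ really is a direct summand of its base change inside $\cA_{\bk}$ (not merely a subobject). Both facts are essentially already recorded in the proof of Theorem~\ref{thm:embed}, so the argument is short once those observations are isolated; the only subtlety worth a sentence is the remark that a standard object has no reason to be *indecomposable* after restriction, but the $\bI^r_{\bk'}$ do decompose into finitely many standards over $\bk$, which is all we need.
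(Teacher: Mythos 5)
Your approach is genuinely different from the paper's: you first construct a finite standard resolution of $\ol{\bk}\otimes_{\bk}M$ over $\ol{\bk}$ via Corollary~\ref{cor:res}, spread it out to a finite subextension $\bk'$, restrict scalars back to $\bk$, and then try to split off a resolution of $M$. The paper instead proves the $\Ext$ base-change formula, uses it to deduce that $M$ has finite injective dimension, and then builds the resolution by induction on injective dimension using Theorem~\ref{thm:embed} and the injectivity of standards (Theorem~\ref{thm:standards-are-injective}). Your route has the merit of making the descent visible at the level of actual complexes, but it is where the difficulties lie.

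Two points need attention. First, a small slip: $\tfrac{1}{[\bk':\bk]}\operatorname{Tr}_{\bk'/\bk}$ may be identically zero when $p \mid [\bk':\bk]$; but in fact you do not need the trace at all. Since restriction of scalars along $\bk \hookrightarrow \bk'$ does not touch the $\bK$- or $\fS$-actions, $\bk'\otimes_{\bk}M$ is simply $M^{\oplus d}$ in $\cA_{\bk}$ (with $d = [\bk':\bk]$), so $M$ is a direct summand for free. Second, and more seriously, the step ``one simply applies the exact `direct summand' projector to the whole complex'' is not as immediate as written. The idempotent $e$ on $M^{\oplus d}$ lifts to a chain map on the injective resolution, but only up to homotopy is that lift idempotent; actually splitting an idempotent in the homotopy category of a bounded complex of injectives is a theorem, not a triviality, and the parallel claim ``a direct summand of an object admitting a finite standard resolution admits one'' is not a consequence of Corollary~\ref{cor:indecomposable} by itself. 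The clean way to finish from where you are is exactly the paper's induction: from the resolution of $M^{\oplus d}$ deduce that $M$ has finite injective dimension (since $\Ext^i(X,M)$ is a direct summand of $\Ext^i(X,M^{\oplus d})$), then embed $M$ into a finitely generated standard $I^0$ by Theorem~\ref{thm:embed}, observe $\coker$ has strictly smaller injective dimension because $I^0$ is injective (Theorem~\ref{thm:standards-are-injective}), and induct. With that replacement your argument closes, but as written there is a genuine gap at the idempotent-splitting step.
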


\begin{proof}
Let $n$ be the injective dimension of $\ol{\bk} \otimes_{\bk} M$ in $\cA_{\ol{\bk}}$, which is finite (Corollary~\ref{cor:res} and Theorem~\ref{thm:standards-are-injective}). If $N$ is any object of $\cA$ then the previous proposition gives
\begin{displaymath}
\ol{\bk} \otimes_{\bk} \Ext^i_{\cA}(M, N) = \Ext^i_{\cA_{\ol{\bk}}}(\ol{\bk} \otimes_{\bk} M, \ol{\bk} \otimes_{\bk} N),
\end{displaymath}
which vanishes for $i>n$. We conclude that $M$ has injective dimension $\le n$.

To establish the existence of the resolution, we proceed by induction on the injective dimension of $M$. If $M$ is injective then it is standard (Corollary~\ref{cor:indecomposable}), and the result is clear. Now suppose $M$ has injective dimension $>0$. We can find an injection $f \colon M \to I^0$ where $I^0$ is a finitely generated standard object (Theorem~\ref{thm:embed}). Since $I^0$ is injective (Theorem~\ref{thm:standards-are-injective}), it follows that $\coker(f)$ has smaller injective dimension than $M$. Thus, by the inductive hypothesis, we have a resolution of the desired form for $\coker(f)$. Prepending $I^0$ to this resolution gives the sought resolution for $M$.
\end{proof}

\subsection{The Grothendieck group} \label{ss:groth}

Let $\bF_r=\bk(\xi_1, \ldots, \xi_r)^{\fS_r}$. We note that $\End_{\cA}(\bI^r)=\bF_r$ by Proposition~\ref{prop:stdmap}. Thus $\Hom_{\cA}(\bI^r, -)$ is naturally a functor to $\bF_r$ vector spaces.

\begin{lemma} \label{lem:stdhom}
We have $\dim_{\bF_r} \Hom_{\cA}(\bI^r, \bI^s) = \binom{r}{s}$ for all $r$ and $s$.
\end{lemma}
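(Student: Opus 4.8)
The plan is to compute $\Hom_{\cA}(\bI^r, \bI^s)$ using the mapping property of standard objects established in Proposition~\ref{prop:stdmap}, which identifies this Hom-space with the invariants $(\bI^s)^{\fS_r \times \fS^r}$. So the task reduces to a concrete computation: describe the $\bk(\xi_1,\ldots,\xi_r)^{\fS_r}$-vector space of $(\fS_r \times \fS^r)$-invariant elements of $\bI^s = \bK \otimes_\bk \bV^s$.

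First I would analyze the $\fS^r$-invariants of $\bI^s$. An element of $\bI^s$ is a finite $\bK$-linear combination $\sum_S a_S e_S$ over $s$-element subsets $S \subseteq [\infty]$. Being fixed by $\fS^r$ (which acts on both the coefficients and the basis vectors) forces, for each orbit of $s$-subsets under $\fS^r$, a compatibility: the orbits of $s$-subsets under $\fS^r$ are indexed by the value $k = |S \cap [r]|$ together with the actual subset $S \cap [r] \subseteq [r]$, with $0 \le k \le \min(r,s)$; within such an orbit the "tail part" $S \setminus [r]$ ranges over all $(s-k)$-subsets of $[\infty] \setminus [r] = \{r+1, r+2, \ldots\}$. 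The point is that $(\bI^s)^{\fS^r}$ is a semilinear representation of $\fS_r$ over $\bK_r = \bk(\xi_1,\ldots,\xi_r)$ — this is exactly the structure already exploited in Corollary~\ref{cor:stdquo} — and I want to pin down its $\bK_r$-dimension and then pass to $\fS_r$-invariants via Galois descent (Proposition~\ref{prop:galois}), which says $\dim_{\bF_r}(\text{invariants}) = \dim_{\bK_r}(\text{the whole space})$.

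The key step is therefore to show $\dim_{\bK_r} (\bI^s)^{\fS^r} = \binom{r}{s}$. Here the natural guess is that a basis is given, for each $k$-subset $T \subseteq [r]$ with $k = s$, by the single vector $e_T$ itself — these are the $s$-subsets contained entirely in $[r]$, of which there are $\binom{r}{s}$, and they are manifestly $\fS^r$-fixed (both coefficient $1 \in \bK_r$ and basis vector are fixed). The content is that nothing else survives: an $\fS^r$-invariant element cannot have a nonzero component on any $e_S$ with $S \not\subseteq [r]$. For such an $S$, the tail $S \setminus [r]$ is nonempty, so $\fS^r$ acts with infinite orbit on the relevant basis vectors while the coefficient stays in a finite-transcendence-degree field; chasing invariance shows the coefficient must be an $\fS^r$-invariant element of $\bK$ supported "at infinity," forcing it into $\bK_r$, and then the infinitely many Galois-type conjugates of $e_S$ would all have to appear with the same coefficient, contradicting finiteness of the support unless that coefficient is $0$. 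I would phrase this cleanly using the faithfulness of the $\fS^r$-action and the fact that $\bK^{\fS^r} = \bK_r$. Once $(\bI^s)^{\fS^r}$ is identified as the $\bK_r$-span of $\{e_T : T \subseteq [r], |T| = s\}$, which is a \emph{trivial} semilinear $\fS_r$-representation twisted only through the permutation action of $\fS_r$ on the $s$-subsets of $[r]$, I apply Galois descent: $\dim_{\bF_r} (\bI^s)^{\fS_r \times \fS^r} = \dim_{\bK_r}(\bI^s)^{\fS^r} = \binom{r}{s}$.

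The main obstacle I anticipate is the invariance argument ruling out components on $e_S$ with $S \not\subseteq [r]$ — i.e., making rigorous that an $\fS^r$-fixed vector with finite support cannot spill outside $[r]$. This is not deep but needs care because the $\fS^r$-action simultaneously moves basis vectors and acts on coefficients, so one must separate these effects; the cleanest route is probably to fix a very large $m$, restrict attention to the subgroup $\fS^m \subseteq \fS^r$ (which fixes all $\xi_i$ appearing in the relevant coefficients and fixes $e_S$ for every $S$ in the support whenever $S \subseteq [m]$), use invariance under $\fS^m$ to force the coefficients into $\bK_m$, and then use the residual $\fS_m$-action to detect and kill any support outside $[r]$ by a minimal-support / conjugation argument exactly parallel to the lemma preceding Proposition~\ref{prop:injectivity-phi}. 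Everything after establishing the $\bK_r$-dimension is a routine application of Proposition~\ref{prop:galois} and Proposition~\ref{prop:stdmap}.
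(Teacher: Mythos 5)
Your proposal is correct and follows essentially the same route as the paper: reduce via Proposition~\ref{prop:stdmap} to computing $(\bI^s)^{\fS_r \times \fS^r}$, identify $(\bI^s)^{\fS^r}$ as the $\bK_r$-span of $\{e_T : T \subseteq [r],\ |T|=s\}$, and then apply Galois descent (Proposition~\ref{prop:galois}) to pass to $\fS_r$-invariants. The paper simply asserts the identification of $(\bI^s)^{\fS^r}$ as something ``one easily sees,'' whereas you sketch the justification (the finite-support versus infinite-$\fS^r$-orbit argument); that filling-in is sound and could even be stated more directly than your ``fix a very large $m$'' variant, since invariance already shows the support is a union of $\fS^r$-orbits, each of which is infinite unless $S \subseteq [r]$.
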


\begin{proof}
By Proposition~\ref{prop:stdmap}, we have $\Hom_{\cA}(\bI^r, \bI^s) = (\bI^s)^{\fS_r \times \fS^r}$. Now, one easily sees that
\begin{displaymath}
(\bI^s)^{\fS^r} = \bigoplus_{S \subset [r]} \bk(\xi_1, \ldots, \xi_r) e_S,
\end{displaymath}
where the sum is over subsets of size $s$. This is a semilinear representation of $\fS_r$ over $\bk(\xi_1, \ldots, \xi_r)$. Thus its $\bk(\xi_1,\ldots,\xi_r)$-dimension, which is $\binom{r}{s}$, coincides with the $\bF_r$-dimension of the $\fS_r$-invariants (Proposition~\ref{prop:galois}), and so the result follows.
\end{proof}

\begin{lemma}
Let $M$ be an object of $\cA$. Then $\Ext^i_{\cA}(\bI^r, M)$ is naturally a vector space over $\bF_r$. If $M$ is finitely generated then it is finite dimensional for all $i$ and vanishes for $i$ large.
\end{lemma}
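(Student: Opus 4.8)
The plan is to reduce everything to the structural results already in hand. First I would observe that the $\bF_r$-vector space structure on $\Ext^i_{\cA}(\bI^r,M)$ is automatic: since $\End_{\cA}(\bI^r)=\bF_r$ by Proposition~\ref{prop:stdmap}, the functor $\Ext^i_{\cA}(\bI^r,-)$ is a functor to $\bF_r$-modules, being the $i$th derived functor of the $\bF_r$-valued functor $\Hom_{\cA}(\bI^r,-)$. So the content is the finiteness and eventual vanishing when $M$ is finitely generated.

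For that, I would take a resolution $0 \to M \to I^0 \to \cdots \to I^n \to 0$ with each $I^k$ a finitely generated standard object, which exists by Proposition~\ref{prop:res2}. Since each $I^k$ is injective (Theorem~\ref{thm:standards-are-injective}), this is an injective resolution of $M$, so $\Ext^i_{\cA}(\bI^r,M)$ is computed as the $i$th cohomology of the complex $\Hom_{\cA}(\bI^r,I^0) \to \cdots \to \Hom_{\cA}(\bI^r,I^n)$. In particular $\Ext^i_{\cA}(\bI^r,M)=0$ for $i>n$, which gives the eventual vanishing. For finite-dimensionality, it suffices to know each term $\Hom_{\cA}(\bI^r,I^k)$ is finite-dimensional over $\bF_r$; since $I^k$ is a \emph{finite} direct sum of objects $\bI^{s}$, this follows from Lemma~\ref{lem:stdhom}, which gives $\dim_{\bF_r}\Hom_{\cA}(\bI^r,\bI^s)=\binom{r}{s}<\infty$. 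A subquotient of a finite-dimensional space is finite-dimensional, so each $\Ext^i_{\cA}(\bI^r,M)$ is finite-dimensional over $\bF_r$.

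There is essentially no obstacle here: every ingredient (the $\End$ computation, the finite standard resolution, and the $\Hom$-dimension formula) is already available in the excerpt, so the argument is a short assembly. The one point to be careful about is that the $\bF_r$-structure on cohomology arising from the resolution agrees with the intrinsic one — but both come from functoriality in the first variable via $\End_{\cA}(\bI^r)=\bF_r$ acting on $\bI^r$, so they coincide. If one wished to avoid invoking Proposition~\ref{prop:res2} (to keep the logical dependencies minimal, since that proposition in the finite-field case itself uses an $\Ext$ vanishing argument), one could instead use any injective resolution and the fact that $\cA$ is locally noetherian together with Corollary~\ref{cor:indecomposable}, noting that $\Hom_{\cA}(\bI^r,\bI^s)$ and hence $\Hom_{\cA}(\bI^r,-)$ applied to any finitely generated standard object is finite-dimensional; but the resolution route above is the cleanest.
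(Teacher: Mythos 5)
Your proof is correct and follows essentially the same route as the paper: take the finite resolution by finitely generated standard objects from Proposition~\ref{prop:res2}, observe it is an injective resolution by Theorem~\ref{thm:standards-are-injective}, compute $\Ext$ as cohomology of the resulting complex of $\bF_r$-vector spaces, and invoke Lemma~\ref{lem:stdhom} for finite-dimensionality of each term. The added remark explaining the source of the $\bF_r$-structure (via $\End_{\cA}(\bI^r)=\bF_r$) is a reasonable elaboration of what the paper leaves implicit.
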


\begin{proof}
Let $M \to I^{\bullet}$ be an injective resolution of $M$. Then $\Ext^{\bullet}_{\cA}(\bI^r, M)$ is computed by $\Hom_{\cA}(\bI^r, I^{\bullet})$, which is a complex of $\bF_r$-vector spaces. Thus each $\Ext$ group is a $\bF_r$ vector space. If $M$ is finitely generated then one can take $I^k$ to be a finitely generated standard object for all $k$ and vanish for $k \gg 0$ (Proposition~\ref{prop:res2}), which yields the result. (Note that $\Hom_{\cA}(\bI^r, I^k)$ is finite dimensional over $\bF_r$ by Lemma~\ref{lem:stdhom}.)
\end{proof}

\begin{theorem} \label{thm:groeth}
Let $\rK$ be the Grothendieck group of the category $\cA^{\fgen}$ of finitely generated objects of $\cA$. Then the classes $[\bI^r]$ form an integral basis of $\rK$.
\end{theorem}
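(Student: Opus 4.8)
The plan is to construct two maps between $\rK$ and the free abelian group $\bigoplus_{r \ge 0} \bZ$ with basis $\{b_r\}_{r \ge 0}$ and check they are mutually inverse. One direction is the homomorphism $\beta \colon \bigoplus_r \bZ \to \rK$ sending $b_r \mapsto [\bI^r]$; this is well-defined since $\rK$ is a group. For the other direction, I would use the additive invariants $M \mapsto \sum_i (-1)^i \dim_{\bF_r} \Ext^i_{\cA}(\bI^r, M)$ for each $r$. By the last lemma of the excerpt, each such integer is well-defined and finite for $M$ finitely generated, and vanishes for all but finitely many $r$ (if $g(M) = g$ then $\Hom_{\cA}(\bI^r, M) = M^{\fS_r \times \fS^r}$ and, more to the point, the standard resolution of Proposition~\ref{prop:res2} has pieces $I^k$ that are sums of $\bI^a$ with bounded $a$, so $\Ext^i(\bI^r, M) = 0$ for $r$ large by Lemma~\ref{lem:stdhom}). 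Since $\Ext$ groups are additive in short exact sequences (via the long exact sequence), these invariants descend to a homomorphism $\alpha \colon \rK \to \bigoplus_r \bZ$, $[M] \mapsto (\chi_r(M))_r$ where $\chi_r(M) = \sum_i (-1)^i \dim_{\bF_r}\Ext^i_{\cA}(\bI^r, M)$.

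Next I would compute the composite $\alpha \circ \beta$. Since $\bI^s$ is injective (Theorem~\ref{thm:standards-are-injective}), $\Ext^i_{\cA}(\bI^r, \bI^s) = 0$ for $i > 0$, so $\chi_r(\bI^s) = \dim_{\bF_r}\Hom_{\cA}(\bI^r, \bI^s) = \binom{r}{s}$ by Lemma~\ref{lem:stdhom}. Thus the matrix of $\alpha \circ \beta$ in the bases $\{b_s\}$, $\{b_r\}$ is the infinite lower-triangular matrix $\left( \binom{r}{s} \right)_{r, s \ge 0}$ with $1$'s on the diagonal; this is unitriangular, hence invertible over $\bZ$ (explicitly, its inverse has entries $(-1)^{r-s}\binom{r}{s}$). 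In particular $\alpha \circ \beta$ is an isomorphism, so $\beta$ is injective and $\alpha$ is surjective.

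It remains to show $\beta$ is surjective, equivalently that the classes $[\bI^r]$ generate $\rK$. This is where the structural input is essential: by Proposition~\ref{prop:res2}, every finitely generated $M$ admits a finite resolution $0 \to M \to I^0 \to \cdots \to I^n \to 0$ by finitely generated standard objects, whence $[M] = \sum_{k=0}^n (-1)^k [I^k]$ in $\rK$, and each $[I^k]$ is a nonnegative integer combination of the $[\bI^r]$. Hence $\{[\bI^r]\}$ spans $\rK$, so $\beta$ is surjective, and combined with the previous paragraph $\beta$ is an isomorphism; equivalently $\{[\bI^r]\}$ is an integral basis. The main obstacle is really just assembling the finiteness and additivity of the Euler characteristics $\chi_r$ correctly — but both are handed to us by the preceding two lemmas and Proposition~\ref{prop:res2} — so the argument is essentially a bookkeeping exercise once one has the standard resolutions and the $\Ext$-finiteness in hand; no genuinely new idea is needed beyond recognizing that the change-of-basis matrix $\binom{r}{s}$ is unitriangular.
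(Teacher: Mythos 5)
Your core idea is the same as the paper's: both use the additive Euler characteristics $\chi_r(M) = \sum_{i \ge 0} (-1)^i \dim_{\bF_r} \Ext^i_{\cA}(\bI^r, M)$ (the paper calls them $\lambda_r$), together with the triangularity $\chi_r([\bI^s]) = \binom{r}{s}$ and the spanning statement from Proposition~\ref{prop:res2}. The paper simply evaluates one invariant $\lambda_r$ at a time on a putative relation rather than packaging them all into a single map $\alpha$.

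There is, however, a genuine error in the packaging. You assert that for a fixed finitely generated $M$, $\chi_r(M)=0$ for all but finitely many $r$, so that $\alpha$ lands in $\bigoplus_{r\ge 0}\bZ$. This is false. Lemma~\ref{lem:stdhom} gives $\dim_{\bF_r}\Hom_{\cA}(\bI^r,\bI^s) = \binom{r}{s}$, which vanishes only when $r<s$; so already for $M=\bI^s$ one has $\chi_r(M)=\binom{r}{s}\ne 0$ for \emph{every} $r\ge s$, and indeed it grows with $r$. Your parenthetical justification has the direction reversed: $\Hom(\bI^r,\bI^a)$ vanishes for $r<a$, not for $r$ large. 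Consequently $\alpha$ is only a homomorphism $\rK\to\prod_{r\ge 0}\bZ$, so $\alpha\circ\beta$ is a map $\bigoplus_s\bZ\to\prod_r\bZ$, and it cannot be ``an isomorphism'' in the sense you claim: the matrix $\bigl(\binom{r}{s}\bigr)$ has finitely supported rows but not finitely supported columns, so while it is invertible in the ring of lower-triangular integer matrices, it does not even map $\bigoplus$ into $\bigoplus$. The surjectivity of $\alpha$ asserted in passing is likewise false (and unused).

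The error is repairable because all the argument actually needs from the matrix is that $\alpha\circ\beta$ is \emph{injective}, and that follows from the lower-unitriangular shape (finitely supported rows, $1$'s on the diagonal): given $\sum_s a_s[\bI^s]=0$, apply $\chi_r$ for the smallest $r$ with $a_r\ne 0$; since $\binom{r}{s}=0$ for $s>r$ and $\binom{r}{r}=1$, you get $a_r=0$, a contradiction. This is exactly the paper's computation with $\lambda_r$. So: same approach as the paper, but replace $\bigoplus$ by $\prod$ as the target of $\alpha$, and drop the claims that $\alpha$ is surjective and that $\alpha\circ\beta$ is invertible; injectivity of $\alpha\circ\beta$ together with surjectivity of $\beta$ from Proposition~\ref{prop:res2} suffices.
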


\begin{proof}
The classes span $\rK$ by Proposition~\ref{prop:res2}. We must prove linear independence. Define
\begin{displaymath}
\lambda_r \colon \rK \to \bZ, \qquad
\lambda_r([M]) = \sum_{i \ge 0} (-1)^i \dim_{\bF_r} \Ext^i_{\cA}(\bI^r, M).
\end{displaymath}
This is well-defined by the lemma. We have $\lambda_r([\bI^s])=\binom{r}{s}$ by Lemma~\ref{lem:stdhom} and the injectivity of $\bI^s$. The result now follows. Indeed, suppose that $\sum_{k=r}^s a_k [\bI^k]=0$ is a linear relation with $a_r \ne 0$. Applying $\lambda_r$, we obtain $a_r=0$, a contradiction.
\end{proof}

\begin{remark}
It follows from the above discussion that $\rK$ admits a natural bilinear pairing $\langle, \rangle$ satisfying $\langle [\bI^r], [\bI^s] \rangle = \binom{r}{s}$. It is not clear to us how to describe $\langle [M], [N] \rangle$ directly in terms of $M$ and $N$: one would like to take an alternating sum of dimensions of Ext groups, but this does not exactly make sense, at least in a naive sense.
\end{remark}

\section{Semilinear representations and $\FIR^{\op}$-modules} \label{s:fir}

\subsection{Definitions, statements, and examples}

Let $\bW^n$ be the $\bK$-vector space with basis $e_{i_1, \ldots, i_n}$, where $i_1, \ldots, i_n$ are distinct elements of $[\infty]$, which is naturally a smooth $\bk$-linear representation of $\fS$. Let $\bJ^n=\bK \otimes_{\bk} \bW^n$, which is an object of $\cA$. It is easy to see that if $V$ is a representation of $\fS$ then $\Hom_{\fS}(\bW^n, V)=V^{\fS^n}$. Thus, by Proposition~\ref{prop:adjunction}, we have $\Hom_{\cA}(\bJ^n, M) = M^{\fS^n}$ for $M \in \cA$. In particular, we have
\begin{displaymath}
\Hom_{\cA}(\bJ^n, \bJ^m) \cong \begin{cases}
0 & \text{if $n<m$} \\
\bk(\xi_1, \ldots, \xi_n)^{\oplus n!/(n-m)!} & \text{if $n \ge m$}.
\end{cases}
\end{displaymath}
To understand the $n \ge m$ case, simply observe that $(\bJ^m)^{\fS^n}$ consists of linear combinations of the basis vectors $e_{i_1,\ldots,i_m}$ with $i_1,\ldots,i_m \in [n]$ with coefficients in $\bK_n$.  

Inspired by the calculation above, we define a $\bk$-linear category, denoted $\FIR$, as follows.  The objects are finite sets.  The space $\Hom_{\FIR}(S, T)$ of morphisms consists of all formal sums $\sum_{\phi \colon S \to T} c(\phi) [\phi]$ where $\phi$ varies over injections $S \to T$ and $c(\phi)$ belongs to the field $\bk(t_i)_{i \in T}$. Given $S \to T$ given by $c(\phi) [\phi]$ and $T \to U$ given by $c(\psi) [\psi]$, the composition is $\psi_*(c(\phi)) c(\psi) [\psi \circ \phi]$; here $c(\phi)$ is a rational functions in variables indexed by $T$, and $\psi_*(c(\phi))$ denotes the result of applying $\psi$ to the indices of the variables. We note that the full subcategory on objects $[n] = \{1,2, \ldots, n\}$ with $n \in \bN$ form a skeleton of the category $\FIR$. We will now identify $\FIR$ with this skeleton.

There is a $\bk$-linear functor $\cJ \colon \FIR^{\op} \to \cA$, described as follows. The object $[n]$ is taken to $\bJ^n$. A morphism $[n] \to [m]$ given by $\sum_{\phi \colon [n] \to [m]} c(\phi) [\phi]$ is taken to the unique morphism $\bJ^m \to \bJ^n$ that maps $e_{1,\ldots,m}$ to $\sum_{\phi \colon [n] \to [m]} c(\phi)(\xi_1, \ldots, \xi_m) e_{\phi(1), \ldots, \phi(n)}$. Here $c(\phi)(\xi_1, \ldots, \xi_m)$ is the result of substituting $\xi_i$ for $t_i$. 

An {\bf $\FIR^{\op}$-module} is a $\bk$-linear functor from $\FIR^{\op}$ to the category of $\bk$-vector spaces; a morphism of modules is a natural transformation. Let $\cB$ be the category of $\FIR^{\op}$-modules. We let $\bP^n$ denote the principal projective $\FIR^{\op}$-module given by $\bP^n([m]) = \Hom_{\FIR^{\op}}([n], [m])$; as the name suggests, these are projective objects of $\cB$. Via the functor $\cJ$, we obtain the following natural functors:
\begin{align*}
&\Psi \colon \cB^{\op} \to \cA, \qquad N \mapsto \Hom_{\cB}(N, \cJ) \\
&\Phi \colon \cA^{\op} \to \cB, \qquad M \mapsto \Hom_{\cA}(M, \cJ). 
\end{align*} We now state our main result.

\begin{theorem}
\label{thm:main}
We have the following:
\begin{enumerate}
\item The category $\cB$ is coherent (and not noetherian), and the abelian category $\cB^{\rm fp}$ of finitely presented $\FIR^{\op}$-modules is artinian.	
\item The functors $\Phi$ and $\Psi$ induce mutually quasi-inverse contravariant equivalences between $\cB^{\rm fp}$ and $\cA^{\rm fg}$. Here $\cA^{\rm fg}$ is the full subcategory of $\cA$ spanned by finitely generated objects.
\end{enumerate}
\end{theorem}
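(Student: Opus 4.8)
The plan is to build the equivalence out of an ``additive core.'' I will first show that $\Phi$ and $\Psi$ restrict to mutually quasi-inverse contravariant equivalences between the finitely generated standard objects of $\cA$ and the finitely generated projective $\FIR^{\op}$-modules, and then bootstrap this to an equivalence $\cA^{\rm fg}\simeq(\cB^{\rm fp})^{\op}$ using the finite standard resolutions of Proposition~\ref{prop:res2} together with exactness of $\Phi$. Part (1) is then deduced from part (2): coherence of $\cB$ and the artinian property of $\cB^{\rm fp}$ are transported along the equivalence from the corresponding properties of $\cA^{\rm fg}$ (local noetherianity and abelianness), and non-noetherianity is exhibited by an explicit example. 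Note that this is not circular: the equivalence is constructed using only structural facts about $\cA$ and the explicit $\Hom$-computations among the $\bJ^n$, and coherence of $\cB$ is never invoked before it is deduced at the end.

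First I record the preliminaries. \emph{(a)} Each $\bJ^n$ is injective in $\cA$: we have $\Hom_\cA(\bJ^n,M)=M^{\fS^n}$, and $M^{\fS^n}=(\Sigma^n M)^{\fS}$ is naturally identified with $\Hom_{\cA_{\bk(u_1,\ldots,u_n)}}(\bI^0,\Sigma^n(M))$; since $\Sigma^n$ is exact and $\bI^0$ is injective (Theorem~\ref{thm:standards-are-injective}), the functor $\Hom_\cA(\bJ^n,-)$ is exact. Hence $\Phi=\Hom_\cA(-,\cJ)$ is an exact contravariant functor $\cA^{\op}\to\cB$. \emph{(b)} The functor $\cJ\colon\FIR^{\op}\to\cA$ is fully faithful --- this is exactly the identification of $\Hom_\cA(\bJ^n,\bJ^m)=(\bJ^m)^{\fS^n}$ with $\Hom_{\FIR}([m],[n])$ recorded just before the statement --- so $\Phi\circ\cJ^{\op}$ is the Yoneda embedding $\FIR\to\cB$, $[n]\mapsto\bP^n$. \emph{(c)} $\bI^r$ is a direct summand of $\bJ^r$: the ``forget the ordering'' surjection $\bJ^r\to\bI^r$ has a section, since by Proposition~\ref{prop:stdmap} a section is specified by an $\fS_r\times\fS^r$-invariant element of $\bJ^r$, such invariants correspond to rational functions $f\in\bk(\xi_1,\ldots,\xi_r)$, and the requisite condition is that the $\fS_r$-trace of $f$ equal $1$, which is solvable because the trace of the Galois extension $\bk(\xi_1,\ldots,\xi_r)/\bF_r$ is surjective. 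Combining (a)--(c) with Corollary~\ref{cor:indecomposable} and local noetherianity of $\cA$, the idempotent completion inside $\cA$ of the additive category of finite direct sums of $\bJ^n$'s is precisely the category of finitely generated standard (equivalently, finitely generated injective) objects. Now $\Psi$ is the contravariant functor, unique up to isomorphism, with $\Psi(\bP^n)\cong\bJ^n$ that carries colimits in $\cB$ to limits in $\cA$ --- equivalently, $(\Phi,\Psi)$ is a contravariant adjoint pair; thus $\Psi$ interchanges $\cJ$ with the Yoneda embedding, so $\Phi\Psi\cong\mathrm{id}$ and $\Psi\Phi\cong\mathrm{id}$ on the full subcategories spanned by the $\bP^n$ and by the $\bJ^n$. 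Passing to idempotent completions (which is functorial, and where Krull--Schmidt identifies the relevant completions) promotes $\Phi,\Psi$ to mutually quasi-inverse contravariant equivalences between the finitely generated standard objects of $\cA$ and the finitely generated projective $\FIR^{\op}$-modules.

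Next I bootstrap. Applying the exact contravariant $\Phi$ to a finite standard resolution $0\to M\to I^0\to\cdots\to I^n\to 0$ of a finitely generated $M$ (Proposition~\ref{prop:res2}) yields a finite resolution $0\to\Phi(I^n)\to\cdots\to\Phi(I^0)\to\Phi(M)\to 0$ by finitely generated projectives, so $\Phi$ maps $\cA^{\rm fg}$ into $\cB^{\rm fp}$. For full faithfulness of $\Phi$ on $\cA^{\rm fg}$: resolving $N$ by standards writes $\Hom_\cA(M,N)$ as the kernel of $\Hom_\cA(M,I^0)\to\Hom_\cA(M,I^1)$, while the same resolution becomes a projective resolution of $\Phi(N)$, writing $\Hom_\cB(\Phi(N),\Phi(M))$ as the kernel of $\Hom_\cB(\Phi(I^0),\Phi(M))\to\Hom_\cB(\Phi(I^1),\Phi(M))$; it thus suffices to match $\Hom_\cA(M,I)$ with $\Hom_\cB(\Phi(I),\Phi(M))$ naturally for $I$ finitely generated standard, and this follows by resolving $M$ by standards and using the exactness of $\Hom_\cA(-,I)$ ($I$ injective) and of $\Hom_\cB(\Phi(I),-)$ ($\Phi(I)$ projective) together with the additive equivalence. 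For essential surjectivity, take $N\in\cB^{\rm fp}$ with a presentation $P_1\to P_0\to N\to 0$ by finitely generated projectives; let $g\colon\Psi(P_0)\to\Psi(P_1)$ be $\Psi$ applied to $P_1\to P_0$ and set $M=\ker(g)$. Since $\Psi(P_0)$ is a finitely generated standard object and $\cA$ is locally noetherian (Proposition~\ref{prop:noeth}), $M$ is finitely generated, and $M=\Psi(N)$ by left exactness of $\Psi$; applying the exact functor $\Phi$ to the two short exact sequences obtained by factoring $g$ and using $\Phi\Psi\cong\mathrm{id}$ on projectives, one identifies $\Phi(M)\cong\coker(P_1\to P_0)=N$. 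Hence $\Phi\colon(\cA^{\rm fg})^{\op}\to\cB^{\rm fp}$ is an equivalence; being (up to opposites) its adjoint, $\Psi$ restricts to a quasi-inverse $\cB^{\rm fp}\to\cA^{\rm fg}$. This is part (2).

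Part (1) now follows formally. The equivalence transports the abelian structure of $\cA^{\rm fg}$ (abelian since $\cA$ is locally noetherian) to $\cB^{\rm fp}$, and the noetherian property of $\cA^{\rm fg}$ to the statement that $\cB^{\rm fp}$ is artinian. Coherence of $\cB$ --- that finitely presented modules are closed under taking kernels in $\cB$ --- also follows, because the kernel of a morphism $\Phi(h)\colon\Phi(M)\to\Phi(M')$ equals $\Phi(\coker h)$ by exactness of $\Phi$, and $\coker h$ is finitely generated. For non-noetherianity one gives an explicit strictly ascending chain of finitely generated submodules of $\bP^1$; under the equivalence this is a strictly descending chain of subobjects of $\bI^1$, which can be read off the (computable) subobject lattice of $\bI^1$. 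In my estimation the main obstacle is not the last two paragraphs, which are formal once the inputs are in place, but Steps~1--2: making $\Psi$ precise enough to know it is left exact in the contravariant sense and agrees with $\Phi$ on the additive core, and verifying the preliminary facts (a)--(c), in particular the injectivity of $\bJ^n$ and that $\bI^r$ is a summand of $\bJ^r$. The one genuinely non-formal external input to the bootstrapping is local noetherianity of $\cA$, without which essential surjectivity fails, and the non-noetherianity of $\cB$ requires the hands-on example.
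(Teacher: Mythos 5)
Your proof is correct, and it uses the same underlying ingredients as the paper --- exactness of $\Phi$, the identification of $\bJ^n$'s with the $\bP^n$'s (hence an equivalence on the ``additive cores''), finite standard resolutions (Proposition~\ref{prop:res2}), the embedding theorem, and local noetherianity of $\cA$ --- but it reorganizes the logical flow. The paper proves coherence of $\cB$ \emph{first}, directly: given $f\colon\Phi(P')\to\Phi(Q')$ between finitely generated projectives, $\ker(f)=\Phi(\coker f')$ by exactness, and $\coker f'$ is a finitely generated object of $\cA$ that embeds into a finitely generated standard $I$ (Theorem~\ref{thm:embed}), so $\Phi(I)\twoheadrightarrow\ker(f)$ exhibits $\ker(f)$ as finitely generated. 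Once $\cB^{\rm fp}$ is known to be abelian, the paper invokes the standard criterion ``an exact functor between abelian categories with enough projectives that induces an equivalence on projectives is an equivalence.'' You instead prove the equivalence first by hand-bootstrapping from the additive core (and by using local noetherianity for essential surjectivity, where the paper uses Theorem~\ref{thm:embed}), and then read off coherence of $\cB$ as a formal consequence. This is a legitimate reordering: your construction of the equivalence never presupposes that $\cB^{\rm fp}$ is abelian, so there is no circularity, and the two routes deliver the same theorem. The paper's order buys a shorter and more conceptual equivalence argument at the cost of proving coherence separately; yours buys the observation that coherence is really a corollary of the equivalence, at the cost of reimplementing the abstract criterion via explicit resolution/cokernel manipulations. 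Two smaller divergences worth noting: (i) you prove injectivity of $\bJ^n$ via $\Hom_\cA(\bJ^n,-)=\Hom_{\cA_{\bk(u_1,\ldots,u_n)}}(\bI^0,\Sigma^n(-))$ and exactness of $\Sigma^n$, rather than the paper's Lemma~\ref{lem:jstruc} giving $\bJ^n\cong(\bI^n)^{\oplus n!}$; (ii) you replace Lemma~\ref{lem:jstruc} by the weaker statement that $\bI^r$ is a direct summand of $\bJ^r$ (via surjectivity of the Galois trace), which is all the idempotent-completion argument for Lemma~\ref{lem:projective-equivalence} actually needs. Both substitutions are valid.
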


Before embarking on its proof, we look at some examples.

\begin{example} \label{ex:p1}
Let us look at the structure of $\bP^1$, the first principal projective. We have $\bP^1([m])=\Hom_{\FIR}([m], [1])$. If $m>1$ then there are no injections $[m] \to [1]$, and this space vanishes. For $m \le 1$, there is a unique injection $[m] \to [1]$, and so $\bP^1([m])=\bk(t)$. If $V$ is any $\bk$-linear subspace of $\bk(t)$ then we can define a submodule $N_V$ of $\bP^1$ by $N_V([0])=V$ and $N_V([m])=0$ for $m>0$. Every proper subobject of $\bP^1$ is one of the $N_V$'s. Indeed, if $N$ is a subobject of $\bP^1$ that is non-zero in degree~1 then $N([1])$ is an $\End_{\FIR}([1])=\bk(t)$-submodule of $\bP^1([1])=\bk(t)$, and thus all of $\bP^1([1])$. Since $\bP^1$ is generated in degree~1, we get $N=\bP^1$.

This analysis shows that $\bP^1$ is not a noetherian object: if $V_1 \subset V_2 \subset \cdots$ is an increasing chain of $\bk$-subspaces of $\bk(t)$ then $N_{V_1} \subset N_{V_2} \subset \cdots$ is an increasing chain of subobjects of $\bP^1$. It turns out that only the $N_V$ with $V$ finite dimensional can be realized as the kernel of a map from $\bP^1$ to a finite sum of principal projectives. This is essentially the content of the coherence statement in Theorem~\ref{thm:main} in this case.
\end{example}

\begin{example} \label{ex:I1}
The theorem, and the previous example, implies that there is an order reversing isomorphism
\begin{displaymath}
\Phi \colon \{\text{finite dimensional $\bk$-subspaces of $\bk(t)$} \} \to \{\text{non-zero submodules of $\bI^1$}  \}
\end{displaymath}
of posets satisfying $\dim_{\bk} V = \codim_{\bK} \Phi(V)$ (in other words, $\Phi$ is a Galois correspondence). More explicitly, let $V$ be a finite dimensional $\bk$-subspace of $\bk(t)$. Let $a_1(t), \ldots, a_n(t)$ be a basis of $V$. Then $\Phi(V)$ is the submodule of $\bI^1$ generated by the determinant of the following $(n+1) \times (n+1)$ matrix:
\begin{displaymath}
\begin{bmatrix}
e_1 & e_2 & \cdots & e_{n+1} \\
a_1(\xi_1) & a_2(\xi_2) & \cdots & a_1(\xi_{n+1}) \\
\vdots & \vdots & \ddots & \vdots \\
a_n(\xi_1) & a_n(\xi_2) & \cdots & a_n(\xi_{n+1})
\end{bmatrix}.
\end{displaymath}
The theorem above ensures that all submodules of $\bI^1$ are of this form.
\end{example}

\subsection{The proof}

We now start on the proof of Theorem~\ref{thm:main}. We require a few lemmas.

\begin{lemma}
\label{lem:jstruc}
We have an isomorphism $\bJ^n \cong (\bI^n)^{\oplus n!}$.
\end{lemma}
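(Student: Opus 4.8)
I would prove the stronger, canonical statement that the evaluation map
\begin{displaymath}
\mathrm{ev} \colon \Hom_{\cA}(\bI^n, \bJ^n) \otimes_{\bF_n} \bI^n \to \bJ^n, \qquad \varphi \otimes x \mapsto \varphi(x)
\end{displaymath}
is an isomorphism, after checking that its source is isomorphic to $(\bI^n)^{\oplus n!}$. Note that, unlike for $\bI^r$, one cannot prove $\bJ^n \cong (\bI^n)^{\oplus n!}$ at the level of $\bk$-linear $\fS$-representations: already $\bW^2$ is a sum of $\bV^2$ and a ``signed'' variant that is \emph{not} isomorphic to $\bV^2$ when $\operatorname{char}\bk \ne 2$, and it is only the semilinear structure (Galois descent) that lets these copies be untwisted. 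So the use of $\bK$-coefficients is essential.

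\textbf{Step 1: the dimension count.} By Proposition~\ref{prop:stdmap}, $\Hom_{\cA}(\bI^n, \bJ^n) = (\bJ^n)^{\fS_n \times \fS^n}$, and $\End_{\cA}(\bI^n) = \bF_n$. One computes directly that $(\bJ^n)^{\fS^n} = \bigoplus_{\pi \in \fS_n} \bk(\xi_1,\ldots,\xi_n)\, e_{\pi(1),\ldots,\pi(n)}$, an $n!$-dimensional semilinear representation of $\fS_n$ over $\bk(\xi_1,\ldots,\xi_n)$; by Proposition~\ref{prop:galois} its space of $\fS_n$-invariants has $\bF_n$-dimension $n!$, so $\dim_{\bF_n}\Hom_{\cA}(\bI^n,\bJ^n) = n!$ and the source of $\mathrm{ev}$ is $(\bI^n)^{\oplus n!}$. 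Moreover, choosing an $\bF_n$-basis $\varphi_1,\ldots,\varphi_{n!}$ and setting $f_i = \varphi_i(\epsilon_n) \in (\bJ^n)^{\fS^n}$, Galois descent shows the $f_i$ form a $\bk(\xi_1,\ldots,\xi_n)$-basis of $(\bJ^n)^{\fS^n}$.

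\textbf{Step 2: checking $\mathrm{ev}$ is an isomorphism on $\fS^m$-invariants.} A morphism in $\cA$ is an isomorphism as soon as it induces an isomorphism on $\fS^m$-invariants for every $m$, by smoothness (for injectivity: $\ker = \bigcup_m (\ker)^{\fS^m}$ and $(-)^{\fS^m}$ is left exact; for surjectivity one then runs the same union argument on the image). Fix $m$. Both $((\bI^n)^{\oplus n!})^{\fS^m}$ and $(\bJ^n)^{\fS^m}$ have $\bk(\xi_1,\ldots,\xi_m)$-dimension $n!\binom{m}{n}$, with bases indexed respectively by (copy, $n$-subset $S \subseteq [m]$) and by injections $\iota \colon [n] \hookrightarrow [m]$. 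Writing $e_{\{S\}} = \sigma \epsilon_n$ for $\sigma$ with $\sigma([n]) = S$, one sees $\varphi_i(e_{\{S\}}) = \sigma f_i$ is supported only on those $e_\iota$ with $\operatorname{image}(\iota) = S$; hence $\mathrm{ev}^{\fS^m}$ is block-diagonal with one block per $n$-subset $S$, and the $S$-block is $\sigma$ applied to the change-of-basis matrix from $\{e_\pi\}_{\pi\in\fS_n}$ to $\{f_i\}$ of $(\bJ^n)^{\fS^n}$, which is invertible by Step~1. Thus each block, and so $\mathrm{ev}^{\fS^m}$, is invertible, and $\mathrm{ev}$ is an isomorphism.

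\textbf{Main obstacle.} The only real content is the bookkeeping in Step~2: verifying that $\varphi_i(e_{\{S\}})$ is supported exactly on the orderings of $S$ and that the resulting per-$S$ block is precisely the (invertible) base change between $\{e_\pi\}$ and $\{f_i\}$. This is a direct unwinding of the definitions of $\bJ^n$, $\varphi_i$, and the $\fS$-action, in the spirit of the simple computations elsewhere in the paper. Everything else (the $n!$ count, reducing isomorphism-checking to invariants) is immediate from Propositions~\ref{prop:stdmap} and~\ref{prop:galois} and smoothness.
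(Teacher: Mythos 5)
Your proof is correct, but it takes a genuinely different and more hands-on route than the paper's. You construct the evaluation map $\Hom_{\cA}(\bI^n,\bJ^n)\otimes_{\bF_n}\bI^n\to\bJ^n$ explicitly and verify it is an isomorphism by reducing to $\fS^m$-invariants for each $m$ and exhibiting the resulting matrix as block-diagonal indexed by $n$-subsets of $[m]$, with each block obtained from the invertible change-of-basis matrix between $\{e_\pi\}_{\pi\in\fS_n}$ and $\{f_i\}$ in $(\bJ^n)^{\fS^n}$. The paper avoids all of this bookkeeping by working at the level of representable functors: using Proposition~\ref{prop:galois} it shows that the identification $\Hom_{\cA}(\bJ^n,M)=M^{\fS^n}$ together with Galois descent over $\bF_n\subset\bK_n$ yields a \emph{natural} isomorphism $\Hom_{\cA}(\bJ^n,-)\cong\Hom_{\cA}(\bI^n,-)\otimes_{\bF_n}\bK_n\cong\Hom_{\cA}(\bI^n,-)^{\oplus n!}$, and then concludes by Yoneda's lemma. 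Both arguments rest on the same essential input (Galois descent for $\fS_n$ acting on $\bK_n$), but the Yoneda packaging makes the naturality do all the work, so no per-$m$ invariant computation or block-matrix check is needed. Your approach has the modest advantage of producing the isomorphism explicitly, and your preliminary observation that the $\bk$-linear analogue $\bW^n\cong(\bV^n)^{\oplus n!}$ is already false for $n=2$ (so the semilinear structure is genuinely needed) is a nice point that the paper does not make explicitly.
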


\begin{proof}
 Let $\bK_n = \bk(\xi_1, \ldots, \xi_n)$ and let $\bF_n=\bK_n^{\fS_n}$. By Proposition~\ref{prop:galois}, if $V$ is a $\bK_n$-semilinear representation of $\fS_n$ then the natural map $\bK_n \otimes_{\bF_n} V^{\fS_n} \to V$ is an isomorphism. Thus if $M \in \cA$ then we have a canonical isomorphism
\begin{displaymath}
\Hom_{\cA}(\bJ^n, M) = \Hom_{\cA}(\bI^n, M) \otimes_{\bF^n} \bK_n.
\end{displaymath}
Indeed, note that $\Hom_{\cA}(\bJ^n, M)=M^{\fS^n}$ is a $\bK_n$-semilinear representation of $\fS_n$, and its $\fS_n$-invariant space is $\Hom_{\cA}(\bI^n, M)$. We thus see that, as functors to $\bk$-vector spaces, we have an isomorphism
\begin{displaymath}
\Hom_{\cA}(\bJ^n, -) \cong \Hom_{\cA}(\bI^n, -)^{\oplus n!},
\end{displaymath}
and so $\bJ^n \cong (\bI^n)^{\oplus n!}$ by Yoneda's lemma.
\end{proof}

\begin{lemma}
\label{lem:phi-is-exact}
The functor $\Phi$ is exact.
\end{lemma}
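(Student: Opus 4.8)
The plan is to reduce the statement to the injectivity of the standard objects, which has already been established. The key point is that, evaluated at an object, $\Phi$ is just a $\Hom$-into-$\bJ^n$ functor.

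First I would unwind the definitions: for $M \in \cA$ the $\FIR^{\op}$-module $\Phi(M) = \Hom_{\cA}(M, \cJ)$ takes the value
\begin{displaymath}
\Phi(M)([n]) = \Hom_{\cA}(M, \cJ([n])) = \Hom_{\cA}(M, \bJ^n)
\end{displaymath}
on the object $[n]$, and similarly for morphisms. Since $\cB$ is a category of $\bk$-linear functors into $\bk$-vector spaces, kernels and cokernels of morphisms in $\cB$ are computed objectwise; hence a complex in $\cB$ is exact if and only if its evaluation at each $[n]$ is exact. It therefore suffices to check that, for every $n$, the contravariant functor $M \mapsto \Hom_{\cA}(M, \bJ^n)$ from $\cA^{\op}$ to the category of $\bk$-vector spaces is exact.

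This last point is immediate from the injectivity of standard objects: by Lemma~\ref{lem:jstruc} we have $\bJ^n \cong (\bI^n)^{\oplus n!}$, and $\bI^n$ is injective in $\cA$ by Theorem~\ref{thm:standards-are-injective}; a finite direct sum of injectives is injective, so $\bJ^n$ is injective. Consequently $\Hom_{\cA}(-, \bJ^n)$ sends short exact sequences in $\cA$ to short exact sequences of $\bk$-vector spaces, which is exactly what is needed. I do not expect any genuine obstacle here — the entire content sits in the injectivity result already proven, so once Lemma~\ref{lem:jstruc} and Theorem~\ref{thm:standards-are-injective} are available the argument is purely formal.
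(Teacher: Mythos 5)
Your argument is exactly the paper's: reduce to objectwise exactness in $\cB$, then note $\bJ^n\cong(\bI^n)^{\oplus n!}$ is injective by Lemma~\ref{lem:jstruc} and Theorem~\ref{thm:standards-are-injective}, so $\Hom_{\cA}(-,\bJ^n)$ is exact. The paper states this more tersely, but the content is identical.
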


\begin{proof}
This follows immediately from the fact that $\bJ^n$ is injective, which is a consequence of Lemma~\ref{lem:jstruc} and Theorem~\ref{thm:standards-are-injective}.
\end{proof}

\begin{lemma}
\label{lem:projective-equivalence}
The contravariant functor $\Phi$ takes $\bJ^n$ to $\bP^n$, and induces an isomorphism
\begin{displaymath}
\Hom_{\cA}(\bJ^n, \bJ^m) \to \Hom_{\cB}(\bP^m, \bP^n).
\end{displaymath}
Moreover, $\Phi$ is fully faithful on standard objects, takes standard objects to projective objects, and all finitely generated  projective objects in $\cB$ can be obtained in this way.
\end{lemma}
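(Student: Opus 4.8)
The plan is to deduce everything from Yoneda's lemma together with the explicit description of $(\bJ^m)^{\fS^n}$ recorded just before the definition of $\FIR$. By construction $\Phi(\bJ^n)$ is the $\FIR^{\op}$-module $[m] \mapsto \Hom_{\cA}(\bJ^n, \cJ([m])) = \Hom_{\cA}(\cJ([n]), \cJ([m]))$, and since $\cJ$ is a $\bk$-linear functor, applying $\cJ$ to morphisms produces a map of $\FIR^{\op}$-modules
\begin{displaymath}
\eta^n \colon \bP^n = \Hom_{\FIR^{\op}}([n], -) \longrightarrow \Hom_{\cA}(\cJ([n]), \cJ(-)) = \Phi(\bJ^n),
\end{displaymath}
natural in the argument by functoriality of $\cJ$. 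I would check that $\eta^n$ is an isomorphism level by level: $\Hom_{\FIR^{\op}}([n], [m]) = \Hom_{\FIR}([m], [n])$ is free over $\bk(t_1, \ldots, t_n)$ on the injections $[m] \hookrightarrow [n]$, while $\Hom_{\cA}(\bJ^n, \bJ^m) = (\bJ^m)^{\fS^n}$ (Proposition~\ref{prop:adjunction} applied to $\bW^n$) is free over $\bk(\xi_1, \ldots, \xi_n)$ on the vectors $e_{\psi(1), \ldots, \psi(m)}$ for $\psi \colon [m] \hookrightarrow [n]$; unwinding the definition of $\cJ$, $\eta^n_{[m]}$ is precisely the identification of these bases together with the substitution $t_i \mapsto \xi_i$, hence bijective. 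This proves $\Phi(\bJ^n) \cong \bP^n$.

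For the asserted isomorphism on Hom-spaces, I would combine $\eta^n$ with Yoneda's lemma: $\Hom_{\cB}(\bP^m, \bP^n) \cong \Hom_{\cB}(\bP^m, \Phi(\bJ^n)) = \Phi(\bJ^n)([m]) = \Hom_{\cA}(\bJ^n, \bJ^m)$. A short diagram chase — tracking $\mathrm{id}_{[m]}$ and $\mathrm{id}_{\bJ^m}$ through the Yoneda isomorphisms — shows this composite is inverse to the map induced by the contravariant functor $\Phi$ (after transporting along the $\eta$'s), so $\Phi$ restricts to a fully faithful contravariant functor out of the $\bJ^n$'s, with values the principal projectives $\bP^n$; equivalently, the $\eta^n$ assemble into a natural isomorphism between the functors $[n] \mapsto \bP^n$ and $[n] \mapsto \Phi(\bJ^n)$ on $\FIR$.

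Finally I would pass from the $\bJ^n$'s to the $\bI^r$'s using Lemma~\ref{lem:jstruc}, which realizes $\bI^r$ as a direct summand of $\bJ^r \cong (\bI^r)^{\oplus r!}$. Since $\Phi$ is additive and fully faithful on the $\bJ^n$'s, transporting the corresponding idempotents shows $\Phi$ is fully faithful on finite direct sums of $\bI^r$'s and carries $\bI^r$ to the summand $\Phi(\bI^r)$ of $\bP^r$, which is therefore projective; thus $\Phi$ takes standard objects to projective objects and is fully faithful there (for infinite direct sums of $\bI^r$'s one invokes that each $\bI^r$ is a noetherian object of $\cA$, but only the finitely generated case is needed for Theorem~\ref{thm:main}). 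Conversely, a finitely generated projective $Q$ of $\cB$ is a direct summand of some $\bigoplus_{i=1}^k \bP^{n_i} \cong \Phi(\bigoplus_{i=1}^k \bJ^{n_i})$; full faithfulness of $\Phi$ on standard objects converts the idempotent cutting out $Q$ into an idempotent $e$ of $\End_{\cA}(\bigoplus_i \bJ^{n_i})$, and $\im(e)$ is a direct summand of a finitely generated standard object, hence again standard by the Krull--Schmidt--Azumaya theorem, using that each $\bI^r$ is indecomposable with field endomorphism ring (Corollary~\ref{cor:indecomposable} and \S\ref{ss:groth}). Writing $I = \im(e)$, additivity of $\Phi$ gives $\Phi(I) \cong Q$. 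The step I expect to require the most care is the compatibility check in the second paragraph — verifying that the isomorphism produced by Yoneda genuinely is the one induced by $\Phi$ — together with the idempotent bookkeeping in the last step; everything else is formal once the levelwise description of $\eta^n$ is in hand.
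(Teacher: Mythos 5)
Your argument is correct and follows essentially the same route as the paper: identify $\Phi(\bJ^n)$ with $\bP^n$ levelwise, deduce the Hom isomorphism from Yoneda, and pass to standard objects via Lemma~\ref{lem:jstruc} and additivity of $\Phi$. The only small divergence is at the very end, where you invoke Krull--Schmidt--Azumaya to see that $I=\im(e)$ is standard, whereas the paper argues more directly that $I$, being a direct summand of the injective $\bigoplus_i \bJ^{n_i}$, is itself injective and hence standard by Corollary~\ref{cor:indecomposable}; both work, but the latter sidesteps any verification of Krull--Schmidt hypotheses.
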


\begin{proof}
We have $\Phi(\bJ^n)([m]) = \Hom_{\cA}(\bJ^n, \bJ^m) = \Hom_{\FIR^{\op}}([n], [m])$. Here the second equality follows from the definition of $\FIR$. These equalities are functorial in $[m]$ which implies that we have $\Phi(\bJ^n) = \bP^n$. For the second assertion, note that we  have
\begin{displaymath}
\Hom_{\cA}(\bJ^n, \bJ^m) = \Hom_{\FIR^{\op}}([n], [m]) = \bP^n([m]) = \Hom_{\cB}(\bP^m, \bP^n),
\end{displaymath}
where the first equality follows from the definition of $\FIR$, and the last equality is implied by Yoneda lemma. This proves that
\begin{displaymath}
\Hom_{\cA}(\bJ^n, \bJ^m) \to \Hom_{\cB}(\bP^m, \bP^n)
\end{displaymath}
is an isomorphism.
	
Since $\Phi$ is additive, we see from Lemma~\ref{lem:jstruc} that $\Phi(\bI^n)$ is a direct summand of $\Phi(\bJ^n)=\bP^n$.  Hence $\Phi(\bI^n)$ is a projective object. The map
\begin{displaymath}
\Hom_{\cA}(\bI^n, \bI^m) \to \Hom_{\cB}(\Phi(\bI^m), \Phi(\bI^n))
\end{displaymath}
is a direct summand of the map in the previous paragraph, and so is an isomorphism. This shows that $\Phi$ is fully faithful on standard objects. 
	
Finally, note that any finitely generated projective object $P$ is a direct summand of a projective of the form $Q=\bigoplus_{i=1}^k \bP^{n_i}$. In particular, $P$ can be written as a cokernel of a split surjection $Q \to Q$. By the first paragraph, we conclude that $P$ is of the form $\Phi(I)$ for some direct summand $I$ of $\bigoplus_{i=1}^k \bJ^{n_i}$. Since $\bJ^n$ is injective, so is $I$ and thus $I$ is standard (Corollary~\ref{cor:indecomposable}).
\end{proof}

\begin{proof}[Proof of Theorem~\ref{thm:main}]
We first show that $\cB$ is coherent. Suppose that $f \colon P \to Q$ is a map of finitely generated projective $\FIR^{\op}$-modules; we must show that $\ker(f)$ is finitely generated. We can write $P=\Phi(P')$ and $Q=\Phi(Q')$ with $P'$ and $Q'$ finitely generated standard objects. By the previous lemma, $\Phi$ is fully faithful on standard modules, so we have $f=\Phi(f')$ for some $f' \colon Q' \to P'$. Let $K'$ be the cokernel of $f'$, so that $\Phi(K')$ is the kernel of $f$ (Lemma~\ref{lem:phi-is-exact}); we must show that this is finitely generated. Since $K'$ is a finitely generated object of $\cA$, we can find an injection $K' \to I$ for some finitely generated standard object $I$ (Theorem~\ref{thm:embed}). By Lemma~\ref{lem:phi-is-exact}, this yields a surjection $\Phi(I) \to \Phi(K')$, and since we know $\Phi(I)$ is finitely generated, it follows that $\ker(f)$ is finitely generated. This proves that $\cB$ is coherent.
	
We now prove that the functor
\begin{displaymath}
\Phi \colon (\cA^{\rm fg})^{\op} \to \cB^{\rm fp}
\end{displaymath}
given by $\Hom_{\cA}(-, \cJ)$ is an equivalence of categories. By Lemma~\ref{lem:phi-is-exact}, $\Phi$ is exact. Moreover, by Lemma~\ref{lem:projective-equivalence}, $\Phi$ induces an equivalence on the categories of projective objects. By Theorem~\ref{thm:embed}, there are enough projective objects in the source category, and it is clear that there are enough projective objects in the target category. We conclude that $\Phi$ is an equivalence. The theorem now follows.
\end{proof}

\subsection{An application} \label{ss:app}

Let $M \in \cA$ be a finite generated. By Proposition~\ref{prop:res2}, we have a resolution
\begin{displaymath}
0 \to M \to I^0 \to \cdots \to I^n \to 0
\end{displaymath}
where each $I^i$ is a finitely generated standard object. If $\bk$ is infinite, Corollary~\ref{cor:res} shows that we can take $I^i$ to be generated in degrees $\le g-i$ where $g$ is the generation degree of $M$ (and thus $I^i=0$ for $i>g$). We now show that this more precise statement continues to hold if $\bk$ is finite. We do this not because we care so much about the end result, but simply to illustrate how the $\FIR$ side is much easier than the $\fS$-side, and so Theorem~\ref{thm:main} really lets us understand semilinear representations better than before.

\begin{proposition}
Let $M$ be a finitely generated $\FIR^{\op}$-module, and let $n$ be maximal such that $M([n]) \ne 0$. Then there is a surjection $f \colon P \to M$ where $P$ is a finitely generated projective $\FIR^{\op}$-module such that $\ker(f)$ is supported in degrees $<n$.
\end{proposition}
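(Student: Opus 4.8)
The plan is to build $P$ out of principal projectives attached to the "top-degree generators" of $M$ together with enough projectives in lower degrees to hit everything else, and then to analyze the kernel degree by degree. First I would choose a finite-dimensional $\bk$-subspace $U \subseteq M([n])$ that generates $M([n])$ as an $\End_{\FIR}([n]) = \bk(t_1,\dots,t_n)^{\fS_n}$-module — such a $U$ exists since $M$ is finitely generated, hence $M([n])$ is a finitely generated module over this field, so one generator suffices; taking $U$ one-dimensional, spanned by some $x \in M([n])$, gives a map $\bP^n \to M$ hitting all of $M([n])$. (One must be slightly careful here: $\bP^n([n]) = \Hom_{\FIR}([n],[n])$ is a free module over $\bk(t_1,\dots,t_n)^{\fS_n}$ of rank $n!$, and its image under the evaluation-at-$x$ map is the $\bk(t)^{\fS_n}$-span of the $\fS_n$-translates of $x$, which is all of $M([n])$ by Galois descent exactly as in Corollary~\ref{cor:stdquo}.) Since $M$ is finitely generated, I can then choose finitely many further generators lying in degrees $< n$ and form $P = \bP^n \oplus P'$ where $P'$ is a finite sum of $\bP^{m}$'s with $m < n$, together with a surjection $f \colon P \to M$.

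The key point is then that $K := \ker(f)$ satisfies $K([n]) = 0$, i.e.\ $K$ is supported in degrees $< n$. By construction $P([n]) = \bP^n([n]) = \Hom_{\FIR}([n],[n])$ (the summands $\bP^m$ with $m<n$ contribute nothing in degree $n$, as there are no injections $[m]\to[n]$ in the wrong direction — wait, rather: $\bP^m([n]) = \Hom_{\FIR}([m],[n])$ is generally nonzero, so this needs the argument below). So I would instead argue as follows: $P([n]) \to M([n])$ is surjective, and I claim it is in fact injective. The source $P([n]) = \Hom_{\FIR}([n],[n]) \oplus \bigoplus_i \Hom_{\FIR}([m_i],[n])$, but the restriction of $f$ to $\bP^n$ already surjects onto $M([n])$; the trouble is that the lower pieces $\bP^{m_i}([n])$ might map nontrivially into $M([n])$, spoiling injectivity. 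To fix this, I would not demand that $f$ restricted to $\bP^n$ be all of $M([n])$ but rather choose the lower-degree generators of $M$ to be genuine generators of $M / (\text{submodule generated by } M([n]))$, so that the composite $\bP^{m_i} \to M \to M/\langle M([n])\rangle$ carries the relevant information and $\bP^{m_i}([n]) = \Hom_{\FIR}([m_i],[n])$ maps into $M([n])$ only through relations already accounted for. Concretely: decompose $f$ so that on degree $n$, $P([n]) \to M([n])$ factors as $\Hom_{\FIR}([n],[n]) \twoheadrightarrow M([n])$ via evaluation at a chosen generator $x$, which is an $\bk(t)^{\fS_n}$-linear surjection of free-rank-$n!$-over-rank-$n!$; but $M([n])$ has $\bk(t)$-dimension (not $\bk(t)^{\fS_n}$) equal to ... — here I compare $\bk(t_1,\dots,t_n)$-dimensions after base change, exactly as in the proof of Theorem~\ref{thm:standards-are-injective}: $\bk(t)\otimes_{\bk(t)^{\fS_n}}\Hom_{\FIR}([n],[n])$ and $\bk(t)\otimes_{\bk(t)^{\fS_n}} M([n])$ both have dimension $n!\cdot(\text{rank})$, forcing the surjection to be an isomorphism.

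The cleanest route, which I would actually write up, is to dualize via $\Psi$ (equivalently work on the $\cA$-side through Theorem~\ref{thm:main}) and invoke the already-established Corollary~\ref{cor:res}/Theorem~\ref{thm:embed}: a finitely generated $\FIR^{\op}$-module $M$ with $M([k]) = 0$ for $k > n$ corresponds under $\Psi$ to — no, $\Psi$ is only an equivalence between $\cB^{\mathrm{fp}}$ and $\cA^{\mathrm{fg}}$, and "$M([k]) = 0$ for $k>n$" translates to the corresponding object $N \in \cA^{\mathrm{fg}}$ having generation degree $\le n$ (since $\Phi(\bI^r) $ is nonzero precisely in degrees $\ge r$ ... actually $\Phi(\bI^r)([m]) = \Hom_{\cA}(\bI^r,\bI^m) \ne 0$ iff $m \le r$, so "supported in degrees $\le n$" corresponds to "$N$ is a quotient of $\bI^n$-type objects", i.e.\ generation degree $\le n$). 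Then Theorem~\ref{thm:embed} gives an injection $N \hookrightarrow I$ with $I$ standard of generation degree $\le n$ and $\coker$ of strictly smaller generation degree; applying $\Phi$ turns this into a surjection $\Phi(I) \to \Phi(N) = M$ from a finitely generated projective $P = \Phi(I)$ whose kernel $\Phi(\coker)$ has generation degree $< n$ on the $\cB$-side, meaning it is supported in degrees $< n$. \textbf{The main obstacle} is the characteristic-zero-versus-finite-field subtlety in Theorem~\ref{thm:embed}: the "$\coker$ has strictly smaller generation degree" refinement was proved for infinite $\bk$, and the whole point of this proposition (per the surrounding text) is to remove that hypothesis — so I must either give the direct $\FIR$-side argument sketched above (which is characteristic-free, since it only uses the Galois-descent dimension count over $\bk(t)/\bk(t)^{\fS_n}$, valid over any field) or first bootstrap the finite-field case of the sharpened Theorem~\ref{thm:embed}. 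I expect the direct argument — choose $x$ spanning a line in $M([n])$, form $f_0 \colon \bP^n \to M$, observe $f_0$ is injective in degree $n$ by the rank count, extend to $f\colon \bP^n \oplus P' \to M$ by lifting generators of $\coker(f_0)$ which necessarily live in degrees $< n$, and conclude $\ker(f)([n]) = \ker(f_0)([n]) = 0$ — to be the one that works uniformly, and that is what I would present.
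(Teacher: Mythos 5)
There is a genuine gap, and it is concentrated in the step you flag as the crux: ``$f_0$ is injective in degree $n$ by the rank count.'' The error starts earlier, when you describe $\End_{\FIR}([n])$ as the \emph{field} $\bk(t_1,\ldots,t_n)^{\fS_n}$. In fact $\End_{\FIR}([n]) = \Hom_{\FIR}([n],[n])$ is the twisted group algebra $K[\fS_n]$ with $K = \bk(t_1,\ldots,t_n)$; Galois descent (Proposition~\ref{prop:galois}) identifies its module category with $\bF_n$-vector spaces, where $\bF_n = K^{\fS_n}$, so $K[\fS_n] \cong \mathrm{Mat}_{n!}(\bF_n)$. Consequently $M([n])$ decomposes as $K^{\oplus d}$ for some $d$, where $K$ is the unique simple $K[\fS_n]$-module, and: (i) a single element $x$ generates $M([n])$ over $K[\fS_n]$ only when $d \le n!$, so one copy of $\bP^n$ may not even surject onto $M([n])$; and (ii) even when it does surject, the map $\bP^n([n]) = K[\fS_n] \cong K^{\oplus n!} \to M([n]) \cong K^{\oplus d}$ has kernel $K^{\oplus(n!-d)}$, which is nonzero whenever $d < n!$. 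So the rank count does not force injectivity, and $\ker(f_0)([n]) = 0$ fails in general. Your base-change dimension comparison suffers the same problem: after tensoring with $K$ over $\bF_n$, the source has dimension $(n!)^2$ while the target has dimension $d \cdot n!$, and these are equal only for $d = n!$.

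The paper's proof closes exactly this gap by \emph{not} using copies of $\bP^n$ directly: it sets $V = M([n])$ (a $K$-semilinear representation of $\fS_n$, i.e.\ a $K[\fS_n]$-module) and forms $Q = \bP^n \otimes_{K[\fS_n]} V$. Semisimplicity of $K[\fS_n]$-modules (Proposition~\ref{prop:galois}) makes $V$ projective over $K[\fS_n]$, hence $Q$ is projective in $\cB$; and $Q([n]) = K[\fS_n] \otimes_{K[\fS_n]} V = V$ is \emph{by construction} an isomorphism onto $M([n])$. This tensor-product construction is the idea your attempt is missing. Your instinct to avoid the $\Psi$-dualization route is correct, since that would be circular with the finite-field issue; but the direct route needs $Q = \bP^n \otimes_{K[\fS_n]} M([n])$ rather than a single $\bP^n$ attached to a chosen $x$.
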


\begin{proof}
Let $K=\bk(t_1, \ldots, t_n)$. We have $\End_{\cB}(\bP^n)=\End_{\FIR}([n])=K[\fS_n]$. Let $V=M([n])$, which is a $K$-semilinear representation of $\fS_n$, and let $Q=\bP^n \otimes_{K[\fS_n]} V$. Since the category of $K$-semilinear representations of $\fS_n$ is semisimple, it follows that $Q$ is projective. There is a canonical map $Q \to M$ of $\FIR^{\op}$-modules which is an isomorphism in degree $n$. The cokernel is supported in smaller degree, and thus a quotient of a finitely generated projective $Q'$ supported in degrees $<n$. It follows that we have a surjection $f \colon P \to M$ with $P=Q \oplus Q'$ with the required properties.
\end{proof}

\begin{corollary}
Let $M$ be a finitely presented $\FIR^{\op}$-module supported in degrees $\le n$. Then we have a resolution
\begin{displaymath}
0 \to P_n \to \cdots \to P_0 \to M \to 0
\end{displaymath}
where $P_i$ is a finitely generated projective supported in degrees $\le n-i$.
\end{corollary}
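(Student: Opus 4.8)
The plan is a downward induction on the support bound, running the preceding proposition repeatedly and using coherence of $\cB$ to keep everything finitely presented. Assume $M \ne 0$ (the zero module is trivial) and set $M_0 = M$, which is finitely presented and supported in degrees $\le n$. Apply the preceding proposition to $M_0$: since $M_0$ is finitely generated, we obtain a surjection $f_0 \colon P_0 \to M_0$ with $P_0$ a finitely generated projective and $M_1 := \ker(f_0)$ supported in degrees $\le n-1$. Note $P_0$ is then automatically supported in degrees $\le n$: for $m > n$ we have $M_0([m]) = 0$, and $f_0$ in degree $m$ is surjective with kernel $M_1([m]) = 0$, so $P_0([m]) = 0$. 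Moreover $M_1$ is finitely presented: $P_0$ is finitely generated projective, hence finitely presented, $M_0$ is finitely presented by hypothesis, and $\cB$ is coherent by Theorem~\ref{thm:main}, so the kernel of a map between finitely presented objects is finitely presented.

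Now iterate. Having constructed a finitely presented $M_i$ supported in degrees $\le n-i$ with $i \le n-1$, apply the proposition to produce $f_i \colon P_i \to M_i$ with $P_i$ a finitely generated projective supported in degrees $\le n-i$ (by the same automatic argument) and $M_{i+1} := \ker(f_i)$ finitely presented and supported in degrees $\le n-i-1$. After $n$ such steps, $M_n$ is finitely presented and supported in degrees $\le 0$; one more application of the proposition, with top degree $0$, yields a surjection $f_n \colon P_n \to M_n$ whose kernel is supported in degrees $<0$, hence is zero. Thus $M_n \cong P_n$ is itself a finitely generated projective supported in degrees $\le 0$. (If some $M_i$ is already zero, the process terminates and we pad the resolution with zero modules.)

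It remains to splice. Take $P_0 \to M$ to be $f_0$, define $P_i \to P_{i-1}$ for $1 \le i \le n-1$ as the composite of the surjection $P_i \to M_i$ with the inclusion $M_i \to P_{i-1}$, and define $P_n \to P_{n-1}$ as the composite of the isomorphism $P_n \cong M_n$ with the inclusion $M_n \to P_{n-1}$. The resulting complex
\begin{displaymath}
0 \to P_n \to P_{n-1} \to \cdots \to P_0 \to M \to 0
\end{displaymath}
is exact at each spot by the standard diagram chase for a concatenation of the short exact sequences $0 \to M_{i+1} \to P_i \to M_i \to 0$, and each $P_i$ is a finitely generated projective supported in degrees $\le n-i$, as required. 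There is no serious obstacle: essentially all the content lies in the preceding proposition, and the only other load-bearing fact is coherence of $\cB$ (Theorem~\ref{thm:main}), which is precisely what guarantees that each successive kernel $M_{i+1}$ remains finitely presented so that the proposition can be reapplied.
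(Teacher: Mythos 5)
Your proof is correct and takes essentially the same approach as the paper, which says only ``apply the proposition iteratively''; you simply spell out the bookkeeping (why each successive kernel stays finitely presented via coherence, why each $P_i$ is automatically supported in the right degrees, and why the process terminates after $n$ steps).
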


\begin{proof}
Simply apply the proposition iteratively.
\end{proof}

\begin{corollary} \label{cor:res2}
Let $M$ be a finitely generated object of $\cA$ generated in degrees $\le n$. Then we have a resolution
\begin{displaymath}
0 \to M \to I^0 \to \cdots \to I^n \to 0
\end{displaymath}
where $I^i$ is a finitely generated standard object generated in degrees $\le n-i$.
\end{corollary}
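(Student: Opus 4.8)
The plan is to transport the statement through the contravariant equivalence $\Phi \colon (\cA^{\rm fg})^{\op} \to \cB^{\rm fp}$ of Theorem~\ref{thm:main}, where the resolution becomes a projective resolution of an $\FIR^{\op}$-module, and then invoke the preceding Corollary about $\FIR^{\op}$-modules. First I would observe that $M$ being generated in degrees $\le n$ means, after applying $\Phi$, that $N := \Phi(M)$ is supported in degrees $\le n$. This requires checking that the generation degree of an object of $\cA$ matches the ``support degree'' of the corresponding $\FIR^{\op}$-module: under $\Phi$ we have $N([m]) = \Hom_{\cA}(M, \bJ^m)$, and since $\bJ^m \cong (\bI^m)^{\oplus m!}$ (Lemma~\ref{lem:jstruc}), this vanishes for $m$ outside the relevant range precisely when $M$ is a quotient of a sum of $\bI^r$'s with $r \le n$. (Concretely: $\Hom_{\cA}(\bI^r, \bJ^m) = 0$ for $r > m$, so if $M$ is generated in degrees $\le n$ then $N([m]) = 0$ is not quite what we want --- rather we want $N([m])$ to be nonzero only for... let me be careful: $M$ generated in degrees $\le n$ and $\Hom(\bI^r,\bJ^m) \ne 0$ iff $r \le m$, so a surjection from a sum of $\bI^r$'s with $r\le n$ onto $M$ gives an injection $N([m]) = \Hom(M,\bJ^m) \hookrightarrow \Hom(\bigoplus \bI^{r}, \bJ^m)$, which is $0$ when $m > \ldots$ --- actually one must check $\Hom(\bI^r, \bJ^m)$ vanishes for $m$ large relative to $r$, which is false. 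So instead the correct correspondence is that $N$ is supported in degrees $\le n$ in the sense that $N$ is \emph{generated} in degrees $\le n$ as an $\FIR^{\op}$-module, or equivalently killed by the subcategory spanned by $[m]$ with $m>n$ after passing to a suitable quotient; the cleanest route is to note $\Phi$ takes the surjection $\bigoplus \bI^{r_i} \twoheadrightarrow M$ with $r_i \le n$ to an injection $N \hookrightarrow \bigoplus \bP^{r_i}$, and $\bP^{r}$ is supported in degrees $\ge r$ --- hmm, $\bP^r([m]) = \Hom_{\FIR^{\op}}([r],[m])$ is nonzero iff there is an injection $[m] \to [r]$, i.e. $m \le r$. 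So $\bP^r$ is supported in degrees $\le r$, hence $N$ is supported in degrees $\le n$. Good, that is the statement we want.)

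Next I would apply the Corollary preceding this one to $N = \Phi(M)$, which is finitely presented (by part (ii) of Theorem~\ref{thm:main}, $\Phi$ lands in $\cB^{\rm fp}$) and supported in degrees $\le n$. That gives a resolution
\begin{displaymath}
0 \to P_n \to \cdots \to P_0 \to N \to 0
\end{displaymath}
with each $P_i$ a finitely generated projective $\FIR^{\op}$-module supported in degrees $\le n-i$. By Lemma~\ref{lem:projective-equivalence}, every finitely generated projective in $\cB$ is $\Phi(I)$ for a finitely generated standard $I \in \cA$, and $\Phi$ is fully faithful on standard objects, so there are standard objects $I^i$ with $\Phi(I^i) = P_i$. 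Since $\Phi$ is a contravariant exact equivalence on the relevant subcategories (Lemma~\ref{lem:phi-is-exact} and the proof of Theorem~\ref{thm:main}), applying the quasi-inverse $\Psi$ turns the above into an exact sequence
\begin{displaymath}
0 \to M \to I^0 \to \cdots \to I^n \to 0
\end{displaymath}
in $\cA$, with each $I^i$ finitely generated and standard.

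Finally I would translate the degree bound: $P_i$ supported in degrees $\le n-i$ as an $\FIR^{\op}$-module corresponds, by the generation-degree/support dictionary established in the first step (applied now to standard objects, where it is an identity because $\Phi(\bI^r) = \bP^r$ is supported exactly in degrees $\le r$), to $I^i$ being standard with all summands $\bI^r$ satisfying $r \le n-i$, i.e. $I^i$ generated in degrees $\le n-i$. This yields the claimed resolution. The main obstacle, and the only place where any real care is needed, is the first step: pinning down precisely the dictionary between ``generated in degrees $\le n$'' on the $\cA$-side and the support condition on the $\cB$-side, and making sure it behaves correctly under $\Phi$ both for general $M$ and for the standard objects $I^i$ --- everything else is a formal consequence of the equivalence and the already-proved $\FIR^{\op}$-module statement.
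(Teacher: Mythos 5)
Your proposal is correct and follows exactly the same route as the paper: move the preceding corollary on $\FIR^{\op}$-modules across the contravariant equivalence $\Phi$, using that $\bP^r$ is supported in degrees $\le r$ to translate ``generated in degrees $\le n$'' on the $\cA$-side into ``supported in degrees $\le n$'' on the $\cB$-side. One side remark: in your parenthetical you assert that ``$\Hom(\bI^r, \bJ^m)$ vanishes for $m$ large relative to $r$, which is false''---this is in fact \emph{true}: by Lemma~\ref{lem:jstruc} and Lemma~\ref{lem:stdhom}, $\Hom_{\cA}(\bI^r,\bJ^m)\cong\Hom_{\cA}(\bI^r,\bI^m)^{\oplus m!}$ vanishes precisely when $m>r$, so the direct $\Hom$ computation you abandoned would have given the same conclusion as the $\bP^r$-support argument you switched to.
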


\begin{proof}
Move the previous corollary across the equivalence of categories.
\end{proof}

\end{document}